\numberwithin{equation}{section}
\newtheorem{theorem}{Theorem}[section]
\newtheorem{Lemma}[theorem]{Lemma}
\newtheorem{corollary}[theorem]{Corollary}
\theoremstyle{definition}
\newtheorem{definition}[theorem]{Definition}
\theoremstyle{remark}
\newtheorem{remark}[theorem]{Remark}
\newcommand{\la}{\lambda}
\newcommand{\R}{\mathbb{R}}
\newcommand{\Z}{\mathbb{Z}}
\newcommand{\uc}{\mathbb{S}}
\newcommand{\D}{\mathbb{D}}
\newcommand{\N}{\mathbb{N}}
\newcommand{\M}{\mathcal{M}}
\newcommand{\A}{\mathcal{A}}
\newcommand{\C}{\mathbb{C}}
\newcommand{\lam}{\mathcal{L}}
\newcommand{\hlam}{\widehat{\mathcal{L}}}
\newcommand{\qml}{\mathrm{QML}}
\newcommand{\cg}{\mathrm{CG}}
\newcommand{\sh}{\mathrm{SH}}
\newcommand{\sm}{\setminus}
\newcommand{\ch}{\mathcal{H}}
\newcommand{\Bd}{\partial}
\newcommand{\ol}{\overline}
\newcommand{\disk}{\mathbb{D}}
\newcommand{\cdisk}{\overline{\mathbb{D}}}
\newcommand{\V}{\mathcal{V}}
\newcommand{\si}{\sigma}
\newcommand{\Ga}{\Gamma}
\newcommand{\ga}{\gamma}
\newcommand{\hell}{\hat{\ell}}
\newcommand{\tell}{\Tilde{\ell}}
\newcommand{\hc}{\hat{c}}
\newcommand{\hy}{\hat{y}}
\newcommand{\ty}{\tilde{y}}
{\par\noindent\textit{Proof of (#1)}%
}
\begin{document}
\title{Symmetric Cubic Laminations}

\author[\tiny{A.~Blokh}]{Alexander Blokh}

\author[L.~Oversteegen]{Lex Oversteegen}

\author[N.~Selinger]{Nikita Selinger}

\author[V.~Timorin]{Vladlen Timorin}

\author[S.~Vejandla]{Sandeep Chowdary Vejandla}


\address[Alexander~Blokh, Lex~Oversteegen, Nikita Selinger, and Sandeep Chowdary Vejandla]
{Department of Mathematics\\ University of Alabama at Birmingham\\
Birmingham, AL 35294}

\address[Vladlen~Timorin]
{Faculty of Mathematics\\
National Research University Higher School of Economics\\
6 Usacheva str., Moscow, Russia, 119048}

\email[Alexander~Blokh]{ablokh@math.uab.edu}
\email[Lex~Oversteegen]{overstee@uab.edu}
\email[Nikita~Selinger]{selinger@uab.edu}
\email[Vladlen~Timorin]{vtimorin@hse.ru}
\email[Sandeep-Vejandla]{vsc4u@uab.edu}

\subjclass[2010]{Primary 37F20; Secondary 37F10, 37F50}

\keywords{Complex dynamics; laminations; Mandelbrot set; Julia set}




\date{\today}

\begin{abstract}
To investigate the degree $d$ connectedness locus, Thur\-ston studied
\emph{$\sigma_d$-invariant laminations}, where $\sigma_d$ is the
$d$-tupling map on the unit circle, and built a topological model for the
space of quadratic polynomials $f(z) = z^2 +c$. In the spirit of Thurston's
work, we consider the space of all \emph{cubic symmetric polynomials}
$f_\la(z)=z^3+\la^2 z$ in a series of three articles. In the present paper,
the first in the series, we construct a lamination $C_sCL$ together with
the induced factor space $\uc/C_sCL$ of the unit circle $\uc$. As will be
verified in the third paper of the series, $\uc/C_sCL$ is a monotone model
of the \emph{cubic symmetric connected locus}, i.e. the space of all cubic
symmetric polynomials with connected Julia sets.
\end{abstract}

\thanks{The results of this paper are based on the PhD thesis
of Sandeep Vejandla \cite{Vej21}.}

\maketitle
\section{Introduction} A fundamental problem in complex dynamics is to understand
the 
space of complex polynomials of degree $d>1$ modulo affine conjugacy. The
\emph{connectedness locus} $\mathcal M_d$, i.e., the set of all such
polynomials with connected Julia sets, has been extensively studied for the
last 40 years. Major progress has been made for $d=2$ but much less is known
for $d>2$.  Thurston \cite{thu85} introduced \emph{laminations} as a way to
provide models for connected Julia sets and a model for $\M_2$. A
\emph{lamination} $\lam$ is a compact set of chords, called \emph{leaves}, of
the unit circle $\uc$ in the complex plane $\mathbb C$ with the property that
no two leaves intersect inside the open unit disk $\disk$.

Given $d\ge 2$, a lamination is \emph{invariant} if it is preserved by the
map $\sigma_d(z)=z^d$ on the unit circle $\uc$ (see Definition
\ref{inv-lam}). Thurston constructed the space $\qml$ of all \emph{invariant}
quadratic laminations and showed that $\qml$ can be viewed as a lamination
such that for the quotient space $\uc/\qml=\M_2^{Comb}$ there exists a
continuous surjective map $\pi: \partial \mathcal M_2\to \mathcal
M_2^{Comb}$. This map is \emph{monotone}, i.e., all point preimages are
connected, see Definition~\ref{d:monotone}) and is conjecturally a
homeomorphism. Thus, $\mathcal M_2^{Comb}$ is a model of $\M_2$. No such
models are known in case $d>2$.

In this paper we aim at increasing our understanding of $\mathcal M_3$ by
studying a particular slice thereof, namely the slice $\mathcal M_{3,s}$
consisting of all \emph{symmetric} cubic polynomials, i.e., polynomials $P$
with $P(-z)=-P(z)$. These polynomials can be written in the form
$P(z)=z^3+\lambda z$ and correspond to laminations invariant under
$180\degree$ rotation about the origin. Following Thurston, we provide a
model $C_sCL$ for the space of such symmetric cubic invariant laminations and
show that this model is also a lamination (see Figure 1).
Here $C_sCL$ stands for \emph{Cubic$_{symmetric}$ Comajor Lamination}.
\begin{figure}
   \centering
    \begin{minipage}{0.5\textwidth}
        \centering
        \includegraphics[width=1.2\linewidth, height=0.3\textheight]{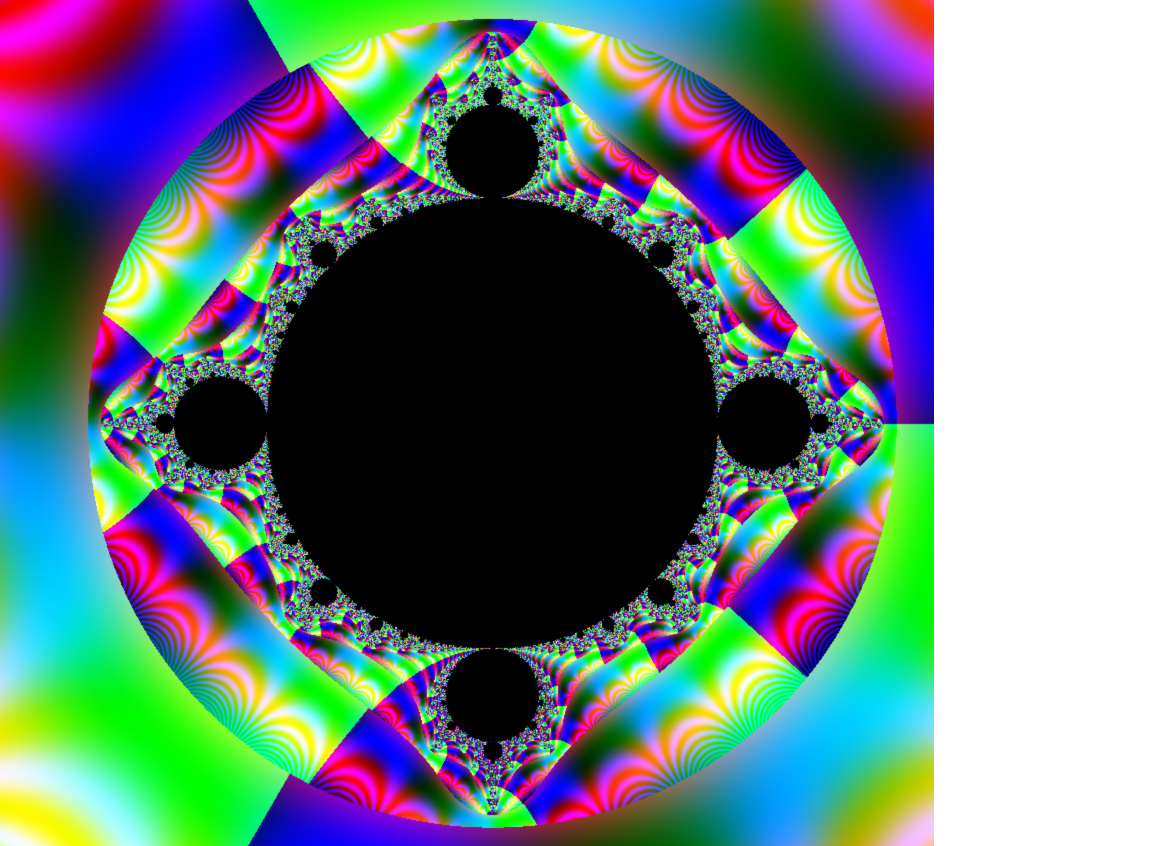}

    \end{minipage}%
    \begin{minipage}{0.5\textwidth}
        \centering
        \includegraphics[width=\linewidth, height=0.3\textheight]{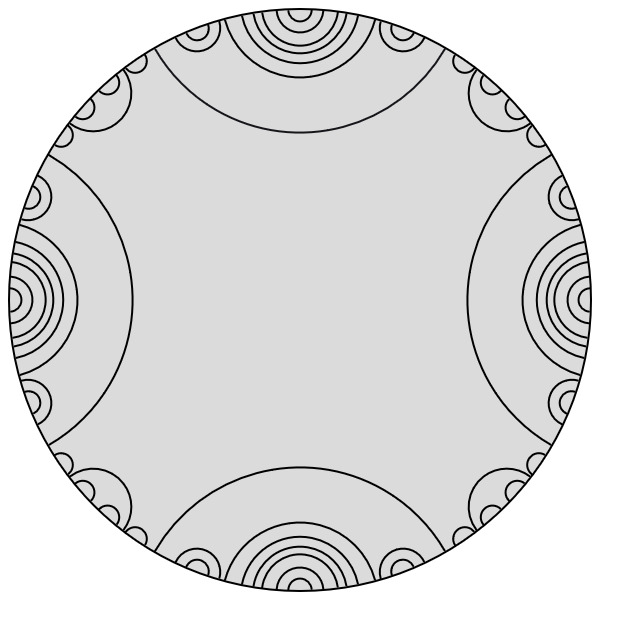}
    \end{minipage}
     \caption{The parameter space of symmetric cubic  polynomials $\mathcal M_{3,s}$ on the left and
     the Cubic symmetric Comajor Lamination $C_sCL$ on the right.}
\end{figure}

Even though the results we obtain are similar to those used in the quadratic
case, there are a lot of interesting distinctions. For example, minors (see
Section 5) of different laminations  may cross in $\disk$, and the first
return maps on finite periodic gaps do not have to be transitive.  We show in
a subsequent paper \cite{botsv3} that there exists a monotone map
$\pi:\partial M_{3,s}\to \mathcal M_{3,s}^{Comb}$ from the boundary of
$\mathcal M_{3,s}$ to the quotient space $\uc/C_sCL$. We also develop in
\cite{botsv2} an algorithm allowing one to explicitly construct $C_sCL$; the
algorithm is related to the famous Lavaurs algorithm \cite{lav89}.

\section{Laminations: classical definitions}

\subsection{Laminational equivalence relation}
\label{ss:ler} Let $\C$ be the complex plane and $\hat{\C}$ be the Riemann
sphere. Let $\D\subset \C$ be the open unit disk and  $P$ be a complex
polynomial of degree $d\geq2$.

\begin{definition}[The Julia set]
The \emph{filled Julia set} $K(P)$ of a polynomial $P$ is the set of all
points $z$ whose orbits do not diverge to infinity under iterations of $P$.
The \emph{Julia set} of $P$ is $J(P)=\partial K(P)$.
\end{definition}{}

\begin{remark}

\begin{enumerate}
    \item We have $P^{-1}(J(P))=P(J(P))=J(P)$.
    \item  The Julia set is the closure of the set of \emph{repelling
        periodic points}.
\end{enumerate}{}

\end{remark}{}

Suppose that the Julia set $J(P)$ is connected. If $f:X\to X$ and $g:Y\to Y$
are self-mappings of topological spaces and there is a continuous surjection
$h:X\to Y$ with $h\circ f=g\circ h$ then $f$ is said to be
\emph{semi-conjugate} to $g$ and the sets $h^{-1}(y)$, where $y\in Y$ are
said to be \emph{fibers} of $h$. If $h$ is a homeomorphism, $f$ is said to be
\emph{conjugate} to $g$. Suppose that $P$ is \emph{monic}, i.e., the leading
term $z^d$ comes with coefficient $1$. By the B\"ottcher theorem, there
exists a conformal map $\Psi:\hat{\C}\sm\overline{\D} \to \hat{\C}\sm K(P)$
that conjugates $\theta_d(z)=z^d$ on $\hat{\C}\sm\overline{\D}$ and
$P|_{\hat{\C}\sm K(P)}$, i.e. $P\circ \Psi=\Psi\circ \theta_d$;  we choose
$\Psi$ so that $\Psi'(\infty)>0$.
\[
\begin{diagram}
\node{\hat{\C}\sm\overline{\D}} \arrow{e,t}{\theta_d} \arrow{s,l}{\Psi} \node{\hat{\C}\sm\overline{\D}}
\arrow{s,l}{\Psi}
\\
\node{\hat{\C}\sm K(P)} \arrow{e,t}{P} \node{\hat{\C}\sm K(P)}
\end{diagram}
\]

From now on (through the end of Section \ref{ss:ler}), let us assume that the
Julia set $J(P)$ is connected and locally connected. Then $\Psi$ extends
continuously to the boundary of the unit disk. Denote this extension by
$\ol{\Psi}$. Let us identify the unit circle $\uc$ with $\R/\Z$. With this
identification, $\sigma_d(t)=dt$ mod 1. 
Define an equivalence relation $\sim_P$ on $\uc$ by setting $x \sim_P y$ if
and only if $\ol{\Psi}(x)=\ol{\Psi}(y)$.

Since $\Psi$ conjugates $\theta_d$ and $P$, the map $\ol{\Psi}$
semi-conjugates $\sigma_d$ and $P|_{J(P)}$, which implies that $\sim_P$ is
$\sigma_d$-invariant. Equivalence classes of $\sim_P$ have pairwise disjoint
convex hulls. The \emph{topological Julia set} $\uc/\sim_P=J(\sim_P)$ is
homeomorphic to $J(P)$, and the \emph{topological polynomial}
$f_{\sim_P}:J(\sim_P)\to J(\sim_P)$, induced by $\sigma_d$, is topologically
conjugate to $P|_{J(P)}$.

\[
\begin{diagram}
\node{\uc/\sim_P} \arrow{e,t}{f_{\sim_P}} \arrow{s,l}{\ol{\Psi}} \node{\uc/\sim_P}
\arrow{s,l}{\ol{\Psi}}
\\
\node{J(P)} \arrow{e,t}{P} \node{J(P)}
\end{diagram}
\]


An equivalence relation $\sim$ on the unit circle, with similar properties to
those of $\sim_P$ above,  can be introduced abstractly without any reference
to the Julia set of a complex polynomial.

\begin{definition}[Laminational equivalence relation]

An equivalence relation $\sim$ on the unit circle $\uc$ is called a
\emph{laminational equivalence relation} if it satisfies the following properties:

\noindent (E1) the graph of $\sim$ is a closed subset in $\uc \times
\uc$;

\noindent (E2) convex hulls in $\D$ of distinct equivalence classes are
disjoint;

\noindent (E3) each equivalence class of $\sim$ is finite.
\end{definition}

A class of equivalence of $\sim$ is called a \emph{$\sim$-class}. For a set
$A\subset \uc$ let $\ch(A)$ be its convex hull. A \emph{chord} $\ol{ab}$ is a
segment connecting points $a, b\in \uc$. An \emph{edge} of $\ch(A)$ is a
chord contained in the boundary of $\ch(A)$. An \emph{edge} of a $\sim$-class
$\mathbf{g}$ is an edge of $\ch(\mathbf{g})$. Given points $a, b\in\uc,$
denote by $(a,b)$ the positively oriented open arc in $\uc$ from $a$ to $b$.

\begin{definition}[Invariance]
\label{d:si-inv-lam}
A laminational equivalence relation $\sim$ is ($\sigma_d$ -){\em in\-va\-riant} if:

\noindent (I1) $\sim$ is {\em forward invariant}: for a $\sim$-class
$\mathbf{g}$, the set $\sigma_d(\mathbf{g})$ is a $\sim$-class;


\noindent (I2) $\sim$ is \emph{backward invariant}: for a $\sim$-class
$\mathbf{g}$, its preimage $\sigma_d^{-1}(\mathbf{g})=\{x\in \uc:
\sigma_d(x)\in \mathbf{g}\}$ is a union of $\sim$-classes;

\noindent (I3) for any $\sim$-class $\mathbf{g}$ with more than two points,
the map $\sigma_d|_{\mathbf{g}}: \mathbf{g}\to \sigma_d(\mathbf{g})$ is a
\emph{covering map with positive orientation}, i.e., for every connected
component $(s, t)$ of $\uc\sm \mathbf{g}$ the arc $(\sigma_d(s),
\sigma_d(t))$ of  the unit circle  is a connected component of $\uc\sm
\sigma_d(\mathbf{g})$.
\end{definition}

\subsection{Invariant laminations}
In Section \ref{ss:ler}, we defined laminational equivalence relations 
based on the identifications of a polynomial map on its locally connected,
and therefore connected, Julia set. A geometric counterpart is the concept of
a \emph{lamination}.



\begin{definition}\label{lam}
A \emph{lamination} $\lam$ is a set of chords in the closed unit disk
$\overline{\D}$, called \emph{leaves} of $\lam$, which satisfies the
following conditions:

\noindent (L1) leaves of $\lam$ 
do not cross; \, (L2) the set $\lam^{\ast}=\cup_{\ell\in\lam}\ell$ is closed.

If (L2) is not assumed then $\lam$ is called a \emph{prelamination}.
\end{definition}

For brevity, in what follows various definitions are given only for
laminations with the understanding that they can be given, verbatim, for
prelaminations as well.

We say that two distinct chords \emph{cross each other} if they intersect inside
the open disk $\D$; such chords are also said to be \emph{linked}. 
 A
\emph{degenerate chord} is a point on $\uc$. Given a chord $\ell
=\overline{ab} \in \lam$, let $\sigma_d(\ell)$ be the chord with endpoints
$\sigma_d(a)$ and $\sigma_d(b)$. If $\sigma_d(a) = \sigma_d(b)$, we call
$\ell$ a {\em critical leaf}; the image of a critical leaf is thus
degenerate, by definition. Let $\lam^{\ast}=\cup_{\ell\in\lam}\ell$ and
$\sigma_d^{\ast}:\lam^{\ast}\rightarrow\overline{\D}$ be the linear extension
of $\sigma_d$ over all the leaves in $\lam$. It is not hard to check that
$\sigma_d^{\ast}$ is continuous. Also, $\sigma_d$ is locally one-to-one on
$\mathbb{S}$, and $\sigma_d^{\ast}$ is one-to-one on any given non-critical
leaf.  Note that if $\lam$ is a lamination (which includes all points of
$\uc$ as degenerate leaves), then $\lam^{\ast}$ is a continuum. For
simplicity in what follows we often use the notation $\si_d$ for $\si_d^*$.

\begin{definition}[Gap]\label{gap-dfn}
A {\em gap} $G$ of a lamination $\lam$ is the closure of a component of
$\D\sm\lam^{\ast}$; its boundary leaves are called \emph{edges (of the gap)}.
Also, given a closed subset of $\uc$ we will call its convex hull a
\emph{gap}, too, even in the absence of a lamination.
\end{definition}

For each set $A\subset \overline{\D}$, denote $A \cap \uc$ by $\V(A)$
 and call the elements of $\V(A)$ \emph{vertices} of $G$.
If $G$ is a leaf or a gap of $\lam$, then $G$ coincides with the convex hull of
$\V(G)$.
A gap $G$ is called \emph{infinite (finite)} if and only if $\V(G)$ is
infinite (finite). A gap $G$ is called a \emph{triangular gap} (or, simply, a
\emph{triangle}) if $\V(G)$ consists of three points. Infinite gaps $G$,
with uncountable $V(G)$, are also called \emph{Fatou} gaps. Given points $a,
b\in\partial G$, let $(a, b)_G$ be the positively oriented open arc in
$\partial G$ from $a$ to $b$.

The so-called \emph{barycentric} construction (due to Thurston \cite{thu85})
yields a further extension $\bar{\si}_d$ of $\si_d$ onto the entire closed
disk $\cdisk$ such that $\bar{\si}_d(G)$ equals the convex hull of
$\si_d(\V(G))$ (the map $\bar{\si}_d$ sends barycenters to barycenters and
then extends linearly on segments connecting barycenters to the boundaries).
Again, for simplicity in what follows we often use notation $\si_d$ for
$\bar{\si}_d$.

\begin{definition}\label{d:gen-lam}
Let $\lam$ be a lamination. The equivalence relation $\sim_\lam$ induced by
$\lam$ is defined by declaring that $x\sim_\lam y$ if and only if there
exists a finite concatenation of leaves of $\lam$ joining x to y.
\end{definition}

\begin{definition}[q-lamination] A lamination $\lam$ is called a
\emph{q-lamination} if the equivalence relation $\sim_\lam$ is laminational
and $\lam$ consists of the edges of the convex hulls of $\sim_\lam$-classes
(called \emph{$\sim_\lam$-sets} or \emph{$\lam$-sets}).
\end{definition}{}

\begin{remark}
Since a q-lamination $\lam$ consists of 
the edges of the convex hulls of $\sim_\lam$-classes, if two leaves of $\lam$
share an endpoint, they must be adjacent edges of a common finite gap. It
follows
that no more than two leaves of a q-lamination can share an 
endpoint.
\end{remark}

\begin{definition}[Invariant (pre)laminations]\label{inv-lam} A (pre)lamination
$\lam$ is \emph{($\sigma_d$-)in\-va\-ri\-ant} if,

\noindent (D1) $\lam$ is \emph{forward invariant}: for each $\ell \in \lam$
either $\sigma_d(\ell) \in \lam$ or $\sigma_d(\ell)$ is a point in $\uc$, and

\noindent (D2) $\lam$ is \emph{backward invariant}:
\begin{enumerate}
    \item For each $\ell \in \lam$ there exists a leaf $\ell' \in \lam$
        such that $\sigma_d(\ell')=\ell$.
    \item  For each $\ell \in \lam$ such that $\sigma_d(\ell)$ is a
        non-degenerate  leaf, there exist $d$ \textbf{disjoint} leaves
        $\ell_1,......\ell_d$ in $\lam$ such that $\ell = \ell_1$ and
        $\sigma_d(\ell_i)=\sigma_d(\ell)$ for all $i$.
\end{enumerate}{}
\end{definition}{}

\begin{definition}[Siblings]\label{siblings} Two chords are called
\emph{siblings}  if they have the same image. Any $d$ disjoint chords with
the same non-degenerate image are called a \emph{sibling collection}.
\end{definition}{}

Definition \ref{siblings} deals with chords and does not assume the existence
of any lamination at all; the definition itself does not require iterations.

\begin{definition}[Monotone Map]
\label{d:monotone} Let $X$, $Y$ be topological spaces and $f:X\rightarrow Y$
be continuous. Then $f$ is said to be {\em monotone} if $f^{-1}(y)$ is
connected for each $y \in Y$.
\end{definition}

It is known that if $f$ is monotone and $X$ is a continuum then $f^{-1}(Z)$
is connected for every connected $Z\subset f(X)$.

\begin{definition}[Gap-invariance] \label{d:gap-inv} A lamination $\lam$ is
\emph{gap invariant} if for each gap $G$, its image $\sigma_d(G)$ is either a
gap of  $\lam$, or a leaf of  $\lam$, or a single point. In the first case, we
also require that $\sigma_d$ can be extended continuously to the boundary of
$G$ as a composition of a monotone map and a covering map onto the boundary of
the image gap, with positive orientation.
In other words, as you move through the vertices of $G$ in clockwise direction around $\Bd G$, their corresponding
images in $\sigma_d(G)$ must also move clockwise in $\Bd\sigma_d(G)$.
\end{definition}{}

\begin{definition}[Degree]
Suppose that both $G$ and $\si_d(G)$ are gaps.
The topological  degree of the extension of $\sigma_d$ to 
$\partial G$ is called the \emph{degree} of $G$. In other words, if every leaf of
$\sigma_d(G)$, except, possibly, for finitely many leaves, has $k$ preimage
leaves in $G$, then the degree of the gap is $k$.
A gap $G$ is called a \emph{critical} gap if either $k>1$, or $\si_d(G)$ is not a gap (a leaf or a point).
\end{definition}

The next two results are proved in \cite{bmov13}.

\begin{theorem}
Every $\sigma_d$-invariant lamination is gap invariant.
\end{theorem}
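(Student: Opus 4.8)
The plan is to fix a gap $G$ of a $\si_d$-invariant lamination $\lam$ and to analyze $\si_d(G)$, which by definition of the barycentric extension equals $H:=\ch(\si_d(\V(G)))$. If $H$ is a single point there is nothing to prove. If $H$ is a chord $\ol{pq}$, then $\si_d(\V(G))=\{p,q\}$; not every edge of $G$ can be critical (if the endpoints of every hole of $G$ had equal image, then by density of holes $\si_d$ would be constant on $\V(G)$), so some edge of $G$ maps onto a non-degenerate chord, which must be $\ol{pq}$, whence $\ol{pq}\in\lam$ by (D1). So assume $H$ is non-degenerate. The first observation is that, by (D1), every edge of $G$ is sent by $\si_d$ to a leaf of $\lam$ or to a point of $\uc$, and by (L1) these images are pairwise non-crossing chords with endpoints in $\V(H)=\si_d(\V(G))\subset\Bd H$.

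The heart of the argument is to show that every edge of $H$ is a leaf of $\lam$. Given an edge $\ell=\ol{pq}$ of $H$, its outer arc $(p,q)$ (the component of $\uc\sm\V(H)$ subtended by $\ell$) contains no point of $\si_d(\V(G))$, so its full $\si_d$-preimage, which consists of $d$ disjoint arcs $I_1,\dots,I_d$ each mapped homeomorphically onto $(p,q)$, is disjoint from $\V(G)$; hence each $I_j$ lies in a hole of $G$, i.e. in the outer arc of some edge $e_j$ of $G$. Starting from a vertex $v\in\V(G)$ with $\si_d(v)=p$ (which exists since $p\in\si_d(\V(G))$), so that $v$ is an endpoint of one of the $I_j$, one uses (D1) together with the non-crossing of the images of the edges of $G$ to locate an edge $e$ of $G$ with $\si_d(e)=\ell$, so that $\ell\in\lam$; backward invariance (D2) supplies the preimage leaves needed to make this bookkeeping work. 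We expect this to be the main obstacle: a naive edge-by-edge treatment is circular, because showing $\ell\in\lam$ for one edge of $H$ seems to require knowing it for a neighbouring edge (to rule out a chord from $p$ ``jumping over'' $q$ and crossing the next edge), so the pull-back and non-crossing argument must be set up so that all edges of $H$ are handled at once, and it is here that (D2) is essential. Once this is done, the edges of $H$ together with the arcs $(p,q)$ make up $\Bd H$, so $\Bd H$ consists of leaves of $\lam$ and of arcs of $\uc$.

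Next one checks that $H$ is a single gap of $\lam$, i.e. that $\mathrm{int}(H)\cap\lam^\ast=\varnothing$. A leaf of $\lam$ meeting $\mathrm{int}(H)$ cannot cross an edge of $H$, since those edges are leaves of $\lam$ and leaves do not cross by (L1); hence it must be a chord of $H$ joining two vertices of $H$ and passing through $\mathrm{int}(H)$, that is, a ``diagonal''. Pulling such a diagonal back by (D2) yields a leaf of $\lam$ that is a diagonal of $G$, which is impossible because $\mathrm{int}(G)$ is a component of $\D\sm\lam^\ast$ and so contains no leaves of $\lam$. Therefore $\mathrm{int}(H)$ is a component of $\D\sm\lam^\ast$ and $H=\si_d(G)$ is a gap of $\lam$ (a leaf or a point in the degenerate cases disposed of above).

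Finally, to obtain the required structure of $\si_d|_{\Bd G}$, collapse to points all the edges of $G$ that map to points, and identify along each arc of $\Bd G$ on which $\si_d$ is locally not injective; this collapsing is a monotone map $m$ of $\Bd G$ onto a topological circle, and the induced map of that circle onto $\Bd H$ is a local homeomorphism, hence a covering map $p$. Its degree $k$ is exactly the number of preimages in $G$ of a generic edge of $H$, i.e. the degree of $G$, and its orientation is positive because $\si_d$ is orientation preserving on $\uc$ and the images of the edges of $G$ were shown to run monotonically around $\Bd H$. Thus $\si_d|_{\Bd G}=p\circ m$ with $m$ monotone and $p$ an orientation-preserving degree-$k$ covering, which is precisely gap invariance.
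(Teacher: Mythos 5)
Your proposal has the right overall shape, and for orientation: the paper does not prove this statement itself --- it is Theorem 1.3 (and the surrounding machinery of ``sibling portraits'') of the cited work \cite{bmov13}, and the proof there runs to several pages. So the relevant question is whether your blind attempt actually closes the argument, and it does not.

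The central difficulty is exactly the step you flag as ``the main obstacle'': showing that every edge of $H=\ch(\si_d(\V(G)))$ is a leaf of $\lam$ and that the images of the edges of $G$ wind around $\partial H$ monotonically with positive orientation. You describe the desired bookkeeping in a sentence and then concede that a naive version is circular and that ``the pull-back and non-crossing argument must be set up so that all edges of $H$ are handled at once,'' without actually setting it up. But that is precisely the content of the theorem: the monotone-plus-covering structure with positive orientation is the conclusion, and it cannot be invoked in the last paragraph (``the images of the edges of $G$ were shown to run monotonically around $\partial H$'') when it was only asserted, never established, in step 3.

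Step 4 also has a genuine flaw as written. You pull a diagonal $\ell_0$ of $H$ back ``by (D2)'' and assert the resulting leaf is a diagonal of $G$. Axiom (D2)(1) only produces \emph{some} preimage leaf of $\ell_0$, with no control over where it sits, so this does not follow. What one actually needs is to take the full sibling collection $\ell_1,\dots,\ell_d$ from (D2)(2), observe that $G$ must lie in the closure of a single component of $\disk\setminus\bigcup_j\ell_j$ (since the $\ell_j$ are leaves and cannot cross $\mathrm{int}(G)$, and none can be an edge of $G$ because then $\ell_0$ would be an edge of $H$), and then prove that the $\uc$-part of each such component is carried by $\si_d$ into the closure of one side of $\ell_0$. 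The last fact is true --- it follows from the alternating pattern of the $2d$ preimage points of the endpoints of $\ell_0$ and a parity argument about non-crossing matchings --- but it is a nontrivial lemma that your proposal neither states nor proves; ``pulling back by (D2)'' elides it completely. In short, you have correctly identified the skeleton of a proof (analyze $\ch(\si_d(\V(G)))$, show its edges are leaves, rule out diagonals, then read off the monotone--covering factorization), but both load-bearing steps are left as assertions, so the proposal does not constitute a proof.
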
{}

\begin{theorem}\label{t:prelam-cl}
The closure of an invariant prelamination is an invariant lamination. The
space of all $\sigma_d$-invariant laminations is compact.
\end{theorem}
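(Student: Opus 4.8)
The plan is to deduce both assertions from the single fact that the lamination axioms and the invariance axioms are stable under Hausdorff limits of chords. Identify a (pre)lamination with the set of its leaves inside the space $\Ca$ of all chords of $\uc$ (degenerate chords, i.e. points of $\uc$, included); since $\Ca$ is a continuous image of $\uc\times\uc$ it is a compact metric space. For the first assertion, let $\lam$ be a $\sigma_d$-invariant prelamination and let $\ol\lam$ denote its closure in $\Ca$. Then $\ol\lam$ is closed, so (L2) holds; and if two leaves of $\ol\lam$ crossed inside $\D$, then, since ``crossing inside $\D$'' is an open condition on pairs of chords, nearby leaves of $\lam$ would also cross, contradicting (L1) for $\lam$. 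Hence $\ol\lam$ is a lamination in the sense of Definition~\ref{lam}, and it remains to verify invariance (Definition~\ref{inv-lam}).

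Throughout I will use that the induced map $\ell\mapsto\sigma_d(\ell)$ of $\Ca$ (sending $\ol{ab}$ to $\ol{\sigma_d(a)\sigma_d(b)}$) is continuous, and that the set of critical chords is closed, so that a chord obtained as a limit of non-critical chords and having a non-degenerate image is itself non-critical. For (D1): take $\ell\in\ol\lam$ with $\sigma_d(\ell)$ non-degenerate and write $\ell=\lim_n\ell_n$ with $\ell_n\in\lam$; for large $n$, $\ell_n$ is non-critical, so $\sigma_d(\ell_n)\in\lam$ by (D1) for $\lam$, and passing to the limit gives $\sigma_d(\ell)\in\ol\lam$. For (D2)(1): given $\ell=\lim_n\ell_n\in\ol\lam$ with $\ell_n\in\lam$, choose $\ell_n'\in\lam$ with $\sigma_d(\ell_n')=\ell_n$; by compactness of $\Ca$ a subsequence of $(\ell_n')$ converges to some $\ell'\in\ol\lam$, and by continuity $\sigma_d(\ell')=\ell$.

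The delicate point, which I expect to be the main obstacle, is (D2)(2): producing an honest sibling collection in $\ol\lam$ over a given leaf. Take $\ell\in\ol\lam$ with $\sigma_d(\ell)=\ol{uv}$ non-degenerate, write $\ell=\lim_n\ell_n$ with $\ell_n\in\lam$; for large $n$, by (D2)(2) for $\lam$, there are $d$ pairwise disjoint leaves $\ell_n=\ell_{n,1},\dots,\ell_{n,d}$ of $\lam$ with common image $\sigma_d(\ell_n)$. Pass to a subsequence so that $\ell_{n,i}\to\ell_i\in\ol\lam$ for every $i$; then $\sigma_d(\ell_i)=\sigma_d(\ell)$ for all $i$ by continuity, and $\ell_1=\ell$. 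The content is to check that $\ell_1,\dots,\ell_d$ are again pairwise disjoint (in particular distinct and non-degenerate). Since $\sigma_d$ is a degree-$d$ covering of $\uc$, the preimage of any point consists of exactly $d$ points that are $1/d$-separated; because the $\ell_{n,i}$ are pairwise disjoint, the $d$ endpoints of the $\ell_{n,i}$ lying over $u$ (resp. over $v$) are exactly these $d$ separated preimages, and this separation survives in the limit. Hence the endpoints of the $\ell_i$ lying over $u$ are pairwise distinct, likewise over $v$, and no endpoint over $u$ can coincide with one over $v$ since $u\neq v$; combined with the fact that Hausdorff limits of unlinked chords are unlinked, this shows the $\ell_i$ are pairwise disjoint, so $\{\ell_1,\dots,\ell_d\}$ is a sibling collection (Definition~\ref{siblings}) in $\ol\lam$. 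This proves the first assertion.

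For the second assertion, recall that the hyperspace of nonempty closed subsets of the compact metric space $\Ca$, equipped with the Hausdorff metric, is itself compact; this is the topology put on laminations. It therefore suffices to show that the set of $\sigma_d$-invariant laminations is closed in this hyperspace. But if $\lam_k\to\lam$ with each $\lam_k$ a $\sigma_d$-invariant lamination, the arguments above go through verbatim: one uses one half of Hausdorff convergence to realize each leaf of $\lam$ as a limit of leaves of the $\lam_k$, and the other half --- if $c_k\in\lam_k$ and $c_k\to c$ then $c\in\lam$ --- exactly where the closure $\ol\lam$ was invoked above, to conclude that $\lam$ satisfies (L1), (L2) and (D1)--(D2). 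Hence $\lam$ is again a $\sigma_d$-invariant lamination, the set of such laminations is closed, and compactness follows.
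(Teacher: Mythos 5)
The paper does not contain its own proof of Theorem~\ref{t:prelam-cl}; it cites \cite{bmov13}, so there is no in-paper argument to compare against. Your proof is correct and self-contained, and it is essentially the argument one extracts from that reference: work in the compact metric space $\Ca$ of chords with the Hausdorff metric; linking is an open condition, so (L1) passes to limits and (L2) is automatic for the closure; (D1) and (D2)(1) follow from continuity of $\ell\mapsto\sigma_d(\ell)$ and compactness of $\Ca$. The genuinely nontrivial point is (D2)(2), and you handle it correctly: the $d$ endpoints over $u$ (resp.\ over $v$) of a sibling collection are all $d$ preimages of a single point under $\sigma_d$, hence uniformly $1/d$-separated, so they cannot collide in the limit; together with openness of linking this shows the limit chords are pairwise disjoint and thus form a genuine sibling collection. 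The reduction of the compactness statement to closedness in the Vietoris hyperspace of $\Ca$ is also standard and your sketch of it is correct. One small point worth making explicit: when extracting the subsequence for (D2)(2), you should note that all $d$ sequences $(\ell_{n,i})_n$ are made to converge simultaneously (possible since $d$ is finite); as written this is implicit but should be spelled out.
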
{}


It is convenient to consider some objects that normally come with a
lamination (e.g.,   gaps), as ``stand alone'' objects. Given the convex hull
$G$ of a closed set $T\subset \uc$ we define $\si_d(G)$ to be the convex hull of
$\si_d(T)$. This allows us to define the sets $\si_d^n(G)$ for all $n\ge 0$.

\begin{definition}\label{d:stand-alone-gaps}
A convex hull $G$ of a closed set $T\subset \uc$ is said to be a \emph{stand
alone} gap (of $\si_d$) if the following holds.

\noindent $(1)$ No chord in $\si_d^i(G)$ crosses a chord in $\si_d^j(G)$ for $i\ne  j$.

\noindent $(2)$ For every $i$, if the set $\sigma^i_d(G)$ has non-empty
interior, then we require that $\si_d|_{\partial \si^{i-1}_d(G)}$ can be
represented as a composition of a monotone map and a covering map onto the
boundary of $\si^{i}_d(G)$, with positive orientation. In other words, as you
move through the vertices of $\si_d^{i-1}(G)$ in clockwise direction around
$\partial \si_d^{i-1}(G)$, their corresponding images in $\sigma^i_d(G)$ must
also move clockwise.
\end{definition}

\subsection{Specific properties of general
invariant laminations}\label{ss:specific}

Here are basic definitions concerning periodic and preperiodic leaves/gaps.

\begin{definition}[Preperiodic  points] A point $x\in \uc$ is said to be
\emph{preperiodic} if $\sigma_3^{m+k}(x)=\sigma_3^m(x)$ for some $m
\geq 0 , k\geq 1$. The smallest $m$ and $k$ that satisfy the above
equation are called the \emph{preperiod} and  the \emph{period} of $x$,
respectively. A preperiodic point $x$ is either \emph{strictly
preperiodic} if $m>0$, or periodic (of \emph{period} $k$) if $m=0$.

\smallskip

\noindent (1) \emph{Preperiodic leaves.} Let $\ell$ be a leaf of a
    \emph{cubic lamination} $\lam$. The leaf $\ell$ is
    \emph{preperiodic (of preperiod $m$ and period $k$)}, if the endpoints $a$ and
    $b$ of $\ell$ are preperiodic of preperiod $m$ and (minimal) period $k$ or $2k$
    (in the latter case, $a$ and $b$ are required to lie in the same cycle).
    The leaf $\ell$ is strictly preperiodic if $m>0$, or periodic if $m=0$.

\smallskip

\noindent (2) \textit{Preperiodic gaps}. Let $G$ be a gap of a cubic
    lamination $\lam$. The gap $G$ is said to be \emph{preperiodic} if
    $\sigma_3^{m+k}(G)=\sigma_3^m(G)$ for some $m \geq 0 , k\geq 1$. The
    smallest $m$ and $k$ that satisfy the above equation are called the
    \emph{preperiod} and the \emph{period} of $G$, respectively. The gap $G$ is either
    preperiodic if $m>0$, or periodic if $m=0$. A periodic gap of period 1
    is also called \emph{fixed} or \emph{invariant}.

\smallskip

\noindent (3) \textit{Precritical gaps}. Similarly, we say that $G$ is
a
    \emph{precritical} gap, if $\sigma_3^k(G)$ is critical gap for some
    $k\geq0$.
\end{definition}

We will also need Theorem \ref{t:gaps-1}.

\begin{theorem}[\protect{\cite[Lemma 2.31]{bopt20}}]\label{t:gaps-1}
Let $G$ be an infinite periodic gap of period $n$ and set $K=\partial G$.
Then $\si_d^n|_K:K\to K$ is the composition of a covering map and a monotone
map of $K$. If $\si_d^n|_K$ is of degree one, then either statement {\rm (1)}
or statement {\rm (2)} below holds.

\begin{enumerate}
\item The gap $G$ has countably many vertices, finitely many of which are
    periodic of the same period, and the rest are preperiodic. All
    non-periodic edges of $G$ are $($pre$)$critical and isolated. There is
    a critical edge with a periodic endpoint among the edges of gaps from
    the orbit of $G$.

\item The map $\si_d^n|_K$ is monotonically semi-conjugate to an irrational
    circle rotation so that each fiber of this semiconjugacy is a finite
    concatenation of $($pre$)$critical edges of $G$. Thus, there are
    critical leaves (edges of some images of $G$) with non-preperiodic
    endpoints.
\end{enumerate}

In particular, if all critical sets of a lamination are non-degenerate finite
polygons then the lamination has no infinite gaps.
\end{theorem}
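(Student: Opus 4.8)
The plan is to regard $K=\partial G$ as an abstract oriented topological circle, carrying the cyclic order in which the vertices of $G$ appear on $\uc$, and to analyse the first-return map $F:=\si_d^n|_K$ by the classical theory of degree-one circle maps, repeatedly exploiting that $\si_d$ is uniformly expanding on $\uc$.

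First I would prove the opening assertion by iterating gap invariance (Definition~\ref{d:gap-inv}): for each $i$ the map $\si_d\colon\partial\si_d^i(G)\to\partial\si_d^{i+1}(G)$ is a monotone map followed by an orientation-preserving covering map, and the class of circle maps expressible as $(\text{covering})\circ(\text{degree-one monotone})$ is closed under composition, since a degree-$k$ circle map with a nondecreasing lift factors as the standard $k$-fold covering precomposed with a degree-one monotone map, so covering factors can be pushed to the outside while degrees multiply. As $\si_d^n(G)=G$, the composite $F$ is of this form; define the degree of $G$ to be the degree of its covering part.

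Assume now the degree is $1$, so $F$ is a monotone degree-one self-map of $K$; by Poincar\'e's theory it has a rotation number $\rho$, and either $\rho=p/q$ is rational (then $F$ has periodic points and every forward orbit converges to a periodic one) or $\rho$ is irrational (then $F$ is monotonically semiconjugate to the rigid rotation $R_\rho$, its fibers being the maximal arcs $J$ with $J,F(J),F^2(J),\dots$ pairwise disjoint). The technical heart, and the step I expect to be the main obstacle, is to show that collapsing genuinely occurs, equivalently that $F$ is not a homeomorphism, using expansion. The mechanism: if an edge $e$ of some $\si_d^i(G)$ is not critical, then (degree one preventing $\si_d$ from merging edges) $\si_d(e)$ is again a non-critical edge of $\si_d^{i+1}(G)$, and the arc of $\uc$ cut off by $\si_d(e)$ on the side away from the gap is the $\si_d$-image of the corresponding arc for $e$, hence $d$ times as long, assuming, as must be checked, that $\si_d$ is injective on it; since these cut-off arcs are pairwise disjoint, the forward $\si_d$-orbit of a non-critical edge cannot consist only of non-critical edges. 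Thus every edge of $G$ is (pre)critical, collapsing occurs, and $F$ is not a homeomorphism; moreover the collapsed arcs are finite concatenations of (pre)critical edges, because there are at most $d-1$ critical leaves and a degree-one map cannot turn a finite concatenation into an infinite one. A second application of expansion shows $F$ has no attracting periodic point on $K$ (near an $F$-periodic vertex, $F$ acts as $\si_d^n$, which expands), so in the rational case a forward orbit can converge to a periodic orbit only by first falling into a collapsed arc whose image is that periodic point; tracing such an arc forward to the step at which it first degenerates produces a critical edge with a periodic endpoint. The delicate point throughout is the quantitative bookkeeping on the cut-off arcs: ruling out a non-critical edge whose cut-off arc overshoots a critical length $k/d$ without the edge being critical, and verifying that the semiconjugacy fibers are exactly the collapsed arcs.

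Assembling the cases: in the rational case the periodic vertices are $\si_d$-periodic points of one bounded period, hence finitely many; the remaining vertices are preperiodic by the previous paragraph; the (pre)critical edges form the countable set $\bigcup_{m\ge 0}F^{-m}(\{\text{critical edges in the orbit of }G\})$ (each $F^{-1}$ is finite-to-one) and are isolated because the forward orbit of any such edge is finite, which is statement (1), with the critical edge with periodic endpoint supplied above. In the irrational case the semiconjugacy fibers are exactly the (nontrivial) collapsed arcs, finite concatenations of (pre)critical edges; since $R_\rho$ has no periodic orbit, the point such a fiber collapses to is not preperiodic, so the critical leaves appearing have non-preperiodic endpoints, which is statement (2). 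Finally, for the concluding clause: it is known that a wandering gap that is not precritical has only finitely many edges, so an infinite gap is precritical or preperiodic; if precritical, a forward image is an infinite critical gap; if preperiodic, pass to a periodic infinite gap, where degree $\ge 2$ again yields an infinite critical gap and degree $1$ yields, via (1) or (2), a critical leaf. In every case this is a critical set that is not a non-degenerate finite polygon, contradicting the hypothesis; hence the lamination has no infinite gaps.
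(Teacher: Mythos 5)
Your proof takes a genuinely different route from the paper's. The paper does not reprove the structural dichotomy (1)/(2) at all: it cites \cite[Lemma~2.31]{bopt20} for essentially everything except the two final assertions (the existence of a critical edge with a periodic endpoint among the edges in the orbit of $G$, and the concluding ``in particular'' clause), which it handles in a few lines, the first by invoking that a degree-one map of a Jordan curve with rational rotation number has a point attracting from one side and then using expansion, the second by combining Theorem~\ref{t:kiwi} with the dichotomy just stated. You, instead, try to rederive the whole dichotomy from Poincar\'e rotation-number theory plus expansion. That is a legitimate ambition (it makes the result self-contained), and the skeleton is sound: decompose the first-return map as a composition of coverings and monotone maps by iterating gap-invariance, split by rotation number, and use expansion to force collapsing.

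However, the technical heart of your argument as written has gaps. The assertion that the cut-off arc of $\si_d(e)$ is the $\si_d$-image of the cut-off arc of $e$ and hence ``$d$ times as long'' is false as soon as the cut-off arc has length $\ge 1/d$ (then $\si_d$ wraps it around the whole circle, and there may also be other gaps/leaves in that arc mapping over $G$ which the degree-one hypothesis, being a statement about $\partial G$ only, does not control). So the growth estimate needs a different formulation, and ``as must be checked, that $\si_d$ is injective on it'' is exactly the point where the naive argument breaks. Likewise, the claims that the semiconjugacy fibers are \emph{finite} concatenations of $($pre$)$critical edges, and that all non-periodic edges in case (1) are \emph{isolated} ``because the forward orbit of any such edge is finite,'' are stated but not established; neither follows in one line, and both are part of what \cite[Lemma~2.31]{bopt20} is for. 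The parts you add that actually overlap with the paper's own proof -- the critical edge with a periodic endpoint via the one-sided attraction argument, and the final clause about laminations whose critical sets are finite polygons -- are essentially correct and match the paper's treatment. In short: right strategy and right auxiliary arguments, but the core dichotomy is asserted rather than proved, precisely where the paper chooses to cite rather than argue.
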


\begin{proof}
All claims of the theorem are proven in Lemma 2.31 \cite{bopt20}, except for
the last claim of (1), and the last claim of the entire lemma. The first of
these claims is about the existence of a critical edge with a periodic
endpoint among edges of gaps from the orbit of $G$. We may assume that $G$ is
invariant. Consider $\si_d|_{\partial G}$. This is a degree one map of the
Jordan curve of rational rotation number, and well-known properties of such
maps imply that it has at least one periodic point attracting from one side.
Since $\si_d$ is expanding on $\uc$, then there is a critical edge of $G$
with a periodic endpoint as claimed.

Let us prove the last claim of the lemma. Suppose that all critical sets of
$\lam$ are non-degenerate finite polygons, and yet $U$ is an infinite gap of
$\lam$. By Theorem \ref{t:kiwi} we may assume that $U$ is $n$-periodic. If
$\si_3^n|_{\partial U}$ is of degree greater than $1$ then for some $i$ we
must have $\si_3|_{\partial \si_3^i(U)}$ $k$-to-$1$ with $k>1$, a
contradiction with the assumption that all critical sets of $\lam$ are
finite. Now, suppose that $\si_3^n|_{\partial U}$ is of degree one. Then by
(1) and (2) there exists a critical leaf, a contradiction.
\end{proof}

A chord $\ell$ in a gap $G$ is a \emph{diagonal} of $G$ if
$\ell\not\subset\partial G$. The gaps described in Theorem \ref{t:gaps-1} are
called \emph{caterpillar} in case (1) and \emph{Siegel} in case (2).

\begin{Lemma}\label{l:adiag-1}
If $G$ is a gap such that $\si_d|_G$ is of degree one, $\ell=\ol{ab}$ is a
diagonal of $G$, and $\ell$ does not share a vertex with a critical edge of
$G$, then $\si_d(\ell)$ is a diagonal of $\si_d(G)$. A diagonal of a Siegel
gap eventually collapses to a point or has crossing images. A diagonal of a
caterpillar gap such that its iterated images are disjoint from critical
leaves will eventually map to a periodic diagonal.
\end{Lemma}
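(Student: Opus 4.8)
The plan is to establish the three parts in order, with the first serving as the engine for the other two.

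\emph{First part.} Set $h=\si_d|_{\partial G}$. Since $\si_d|_G$ has degree one, $h$ is the composition of a degree-one covering (hence a homeomorphism of Jordan curves) with a monotone map, so $h$ is itself monotone, orientation preserving and of degree one from $\partial G$ onto $\partial\si_d(G)$; its fibers are therefore points or subarcs of $\partial G$. I would first record a sublemma: if $v\ne w$ are vertices of $G$ with $\si_d(v)=\si_d(w)$, then the subarc $\gamma$ of $\partial G$ joining $v$ and $w$ inside the common fiber $h^{-1}(\si_d(v))$ contains no non-degenerate arc of $\uc$ (a finite-to-one map of $\uc$ cannot collapse such an arc to a point), hence $\gamma$ is a finite concatenation of edges of $G$, each of which maps to a point and so is critical; in particular both $v$ and $w$ are endpoints of critical edges of $G$. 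Given the sublemma, $\si_d(a)\ne\si_d(b)$ because $\ell$ shares no vertex with a critical edge of $G$, so $\si_d(\ell)$ is a non-degenerate chord of $\si_d(G)$. If $\si_d(\ell)$ were an edge of $\si_d(G)$, one of the two arcs of $\partial\si_d(G)$ cut off by $\si_d(a)$ and $\si_d(b)$ would equal the single leaf $\si_d(\ell)$; by monotonicity and positive orientation of $h$, the corresponding arc of $\partial G$ cut off by $a$ and $b$ would map onto that leaf, forcing the vertex of $G$ in the interior of that arc (which exists because $\ell$ is a diagonal) to map onto $\si_d(a)$ or $\si_d(b)$, contradicting the sublemma. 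Hence $\si_d(\ell)$ is a diagonal of $\si_d(G)$. I expect the matching of the two arcs of $\partial G$ with the two arcs of $\partial\si_d(G)$ using only orientation and monotonicity to be the fussiest point.

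\emph{Siegel part.} By Theorem~\ref{t:gaps-1}(2) there is a monotone semiconjugacy $\phi\colon\partial G\to\uc$ from $\si_d^n|_{\partial G}$ to an irrational rotation $\rho_\alpha$ whose fibers are finite concatenations of (pre)critical edges of $G$. If $\phi(a)=\phi(b)$, then $a$ and $b$ lie in one such fiber $F$; since $F$ is connected and each of its edges is (pre)critical, a sufficiently high iterate of $\si_d$ maps $F$, and hence $\ell$, to a single point. If $\phi(a)\ne\phi(b)$, then for every $k\ge 0$ the chord $\si_d^{kn}(\ell)$ has endpoints in $\V(G)$ whose $\phi$-images are $\rho_\alpha^k(\phi(a))$ and $\rho_\alpha^k(\phi(b))$; thus the $\phi$-image of $\si_d^{kn}(\ell)$ is the rotate $\rho_\alpha^k(C)$ of the non-degenerate chord $C=\ol{\phi(a)\,\phi(b)}$. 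Picking $k\ge 1$ with $k\alpha$ a small nonzero element of $\R/\Z$ makes $\rho_\alpha^k(C)$ cross $C$, and since $\phi$ is monotone the relevant four points occur in the same alternating cyclic order on $\partial G$, so $\ell$ and $\si_d^{kn}(\ell)$ cross. Therefore $\ell$ collapses or has crossing images.

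\emph{Caterpillar part.} Assume all iterated images of $\ell$ are disjoint from critical leaves; then no $\si_d^i(\ell)$ shares a vertex with a critical edge of $\si_d^i(G)$ and no $\si_d^i(\ell)$ is degenerate (else $\si_d^{i-1}(\ell)$ would be a critical leaf), so the first part, applied inductively along the orbit of $G$, shows that every $\si_d^i(\ell)$ is a diagonal of the caterpillar gap $\si_d^i(G)$. By Theorem~\ref{t:gaps-1}(1) all vertices of $G$ are (pre)periodic, so $a$ and $b$ are (pre)periodic points; choosing $m$ larger than both preperiods and $k$ a common multiple of both periods yields $\si_d^{m+k}(\ell)=\si_d^m(\ell)$, and $\si_d^m(\ell)$ is the required periodic diagonal. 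There is no serious obstacle here beyond checking that the hypothesis on critical leaves is precisely what keeps the first part applicable along the whole orbit.
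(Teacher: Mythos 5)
Your proposal is correct and follows essentially the paper's own route: part one argues that if the image drops to an edge (or a point), one boundary arc of $G$ between $a$ and $b$ is swallowed by a single fiber and hence consists entirely of critical edges, forcing $\ell$ to touch a critical edge; the Siegel part projects under the semiconjugacy to an irrational rotation and either collapses inside a fiber or produces crossing iterates; the caterpillar part iterates part one and invokes Theorem~\ref{t:gaps-1}(1) for preperiodicity of vertices. Your version is somewhat more carefully spelled out (the fiber sublemma, the explicit orientation/monotonicity argument, the choice of $k$ making $k\alpha$ small), but the ideas coincide with the paper's.
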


\begin{proof}
Since $\si_d|_G$ is of degree one, $\si_d(\ell)$ is not a diagonal of
$\si_d(G)$ only if the arc, say, $[a, b]_G=I$ collapses onto $\si_d(\ell)$.
Thus, $I$ is a finite concatenation of edges $\ell_1=\ol{ax},$ $\dots$ of
$G$, and the endpoints of the edges map to $\si_d(a)$ or to $\si_d(b)$. If
$\si_d(x)\ne \si_d(a)$ then $\si_d(x)=\si_d(b)$; since $\si_d|_G$ is of
degree one, then all remaining edges of $G$ contained in $I$ are critical.
So, if $\ell$ does not share a vertex with a critical edge of $G$, then this
is impossible and $\si_3(\ell)$ remains a diagonal of $\si_3(G)$.

Suppose that $G$ is a Siegel gap of period $j$. Assume that $X$ is a diagonal
of $G$ that never collapses to a point. Then the monotone map that collapses
edges of $G$ to points and semi-conjugates $\si^j_3|_{\partial G}$ to an
irrational rotation $\xi$ will project $X$ to a non-degenerate chord $\ell$
of $\uc$ (otherwise $X$ connects points connected by a finite concatenation
of a few precritical edges of $G$ which implies that $X$ does eventually
collapse to a point). This yields that there is an iterate of $\xi$ under
which $\ell$ maps to a chord that crosses $\ell$, implying that some iterated
images of $X$ cross.

Consider now a diagonal $X$ of a caterpillar gap $G$ such that the iterated
images of $X$ are disjoint from critical leaves. Then by the first claim of
the lemma all images of $X$ remain diagonals of the corresponding images of
$G$. The claim now follows from Theorem \ref{t:gaps-1}.
\end{proof}

From now on $\lam$ denotes a \emph{cubic} (i.e., $\si_3$-invariant)
lamination. A leaf $\ell$ is \emph{invariant (under $\sigma_3^k$)} if
$\sigma_3^k(\ell)=\ell$. The 4 invariant leaves of $\si_3$ are $\ol{0
\frac12},$ $\ol{\frac14 \frac34},$ $\ol{\frac18 \frac38}$ and $\ol{\frac58
\frac78}$. (Here, we use the identification between $\uc$ and $\R/\Z$, so
that, for example, $\ol{0\frac 12}$
 is the horizontal diameter.)
The first leaf has fixed
endpoints, the other three flip under the action of $\si_3$, and only $\ol{0
\frac12}$ and $\ol{\frac14 \frac34}$ contain the center 
of $\uc$.

Define the length $\|\ol{ab}\|$ of a chord $\ol{ab}$ as the shorter of the
lengths of the arcs in $\uc=\R/\Z$ with the endpoints $a$ and $b$. The
maximum length of a chord is $\frac12$. We divide leaves into three
categories by their length. 

\begin{definition}A \textit{short leaf} is a leaf $\ell$ such that $0<\|\ell\|<\frac{1}{6}$,\\
a \textit{medium leaf} is a leaf $\ell$ such that $\frac{1}{6}\leq\|\ell\|<\frac{1}{3}$ and \\
a \textit{long leaf} is a leaf $\ell$ such that
$\frac{1}{3}<\|\ell\|\leq\frac{1}{2}$.
\end{definition}

\emph{Critical} leaves are leaves of length $\frac13$. Let $\gamma(t)$ be the
distance from $t\in \R$ to the nearest integer. Call $\Gamma(t)=\gamma(3t)$
the \emph{length} function.

\begin{remark}\label{length-fn}

(1) For any leaf $\ell$, we have $\|\si_3(\ell)\|=\Gamma(\|\ell\|)$.


\noindent (2) If $0<\|\ell\|<\frac{1}{4}$, then
    $\|\sigma_3(\ell)\|>\|\ell\|$; if $\|\ell\|=\frac{1}{4}$, then
    $\|\sigma_3(\ell)\|=\|\ell\|$; if $\frac{1}{4}<\|\ell\|<\frac{1}{2}$, then
    $\|\sigma_3(\ell)\|<\|\ell\|$; if $\|\ell\|=\frac{1}{2}$, then
    $\|\sigma_3(\ell)\|=\|\ell\|$.

\noindent (3) For leaves of length bigger than $\frac{1}{4}$, the closer the
leaves  get to a critical chord (of length $\frac{1}{3}$) of the circle, the
shorter their images get.

\noindent (4) For a non-degenerate chord $\ell$, there is $n\ge 0$ such
that $\|\si_3^n(\ell)\|\ge \frac14$.

\end{remark}

\begin{definition}
A leaf $\ell$ is \emph{closer to criticality} than a leaf $\ell'$ if
$\|\ell\|$ is closer to $\frac 13$ than $\|\ell'\|$. This naturally defines
leaves closest to criticality in a specified family of leaves (observe that
in closed families, closest leaves must exist, yet if a family of leaves is
not closed, then its closest leaf does not have to exist).
\end{definition}

\begin{Lemma}\label{l:closest}
Any point $x\in (0, \frac12)$ that does not eventually map to $\frac14$ under $\Gamma$,
 eventually maps to $(\frac14, \frac{5}{12})$.
For a $\Gamma$-periodic but non-fixed point $t$ the closest to $\frac 13$ iterated $\Gamma$-image of $t$
belongs to $(\frac14, \frac{5}{12})$.
\end{Lemma}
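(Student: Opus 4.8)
The plan is to analyze $\Gamma$ as a piecewise-linear self-map of $I=[0,\tfrac12]$ with three full branches of slope $\pm 3$: $\Gamma(t)=3t$ on $[0,\tfrac16]$, $\Gamma(t)=1-3t$ on $[\tfrac16,\tfrac13]$, and $\Gamma(t)=3t-1$ on $[\tfrac13,\tfrac12]$. First I would record the elementary facts that drive everything: the fixed points of $\Gamma$ are $0$, $\tfrac14$, $\tfrac12$; one has $\Gamma(\tfrac13)=0$, $\Gamma(\tfrac16)=\tfrac12$, $\Gamma(\tfrac{5}{12})=\tfrac14$, so $\tfrac13,\tfrac16,\tfrac{5}{12}$ are strictly preperiodic and cannot lie on a cycle; the only $\Gamma$-preimage of $0$ in $(0,\tfrac12)$ is $\tfrac13$; and near the critical value $\tfrac13$ one has the linearization $\Gamma(\tfrac13+s)=3|s|$ for $|s|\le\tfrac16$. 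Note also that $I\setminus[\tfrac14,\tfrac{5}{12}]=[0,\tfrac14)\cup(\tfrac{5}{12},\tfrac12]$.

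For the first statement I would argue by contradiction: suppose the forward $\Gamma$-orbit $\mathcal{O}$ of $x$ avoids $\tfrac14$ and the open interval $(\tfrac14,\tfrac{5}{12})$. Since $\Gamma(\tfrac{5}{12})=\tfrac14$, the orbit also avoids $\tfrac{5}{12}$; since $\tfrac13\in(\tfrac14,\tfrac{5}{12})$, it avoids $\tfrac13$, hence also $0$; so $\mathcal{O}\subseteq[0,\tfrac14)\cup(\tfrac{5}{12},\tfrac12]$. Now split into two cases. If $\mathcal{O}$ ever meets $(\tfrac{5}{12},\tfrac12]$, then the branch $3t-1$ carries $(\tfrac{5}{12},\tfrac12]$ into $(\tfrac14,\tfrac12]$, and avoiding $[\tfrac14,\tfrac{5}{12}]$ forces the image back into $(\tfrac{5}{12},\tfrac12]$, so from then on $\mathcal{O}$ stays in $(\tfrac{5}{12},\tfrac12]$; in the coordinate $u=\tfrac12-s$ this branch becomes $u\mapsto 3u$ on $[0,\tfrac1{12})$, which keeps no orbit inside except $u\equiv0$, so $\Gamma^n(x)=\tfrac12$ for some $n$, and the only preimage chain of $\tfrac12$ missing $(\tfrac14,\tfrac{5}{12})$ is $\tfrac12\leftarrow\tfrac16\leftarrow\tfrac1{18}\leftarrow\cdots$, forcing $x\in\{\tfrac1{2\cdot3^k}:k\ge1\}$. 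If $\mathcal{O}$ never meets $(\tfrac{5}{12},\tfrac12]$, then $\mathcal{O}\subseteq[0,\tfrac14)$; but $1-3t$ sends $[\tfrac16,\tfrac14)$ into $(\tfrac14,\tfrac12]$, outside $[0,\tfrac14)$, so $\mathcal{O}\subseteq[0,\tfrac16)$, where $\Gamma^n(x)=3^nx$ escapes unless $x=0$. Thus the only points of $(0,\tfrac12)$ whose orbit misses $\{\tfrac14\}\cup(\tfrac14,\tfrac{5}{12})$ are the points $\tfrac1{2\cdot3^k}$, all eventually fixed at $\tfrac12$; in particular every $x\in(0,\tfrac12)$ not eventually mapped into $\{0,\tfrac14,\tfrac12\}$ eventually enters $(\tfrac14,\tfrac{5}{12})$, and these sparse exceptions will be irrelevant in the sequel.

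For the second statement, let $t$ be $\Gamma$-periodic and non-fixed, with finite cycle $\mathcal{O}$. A $\Gamma$-cycle consists either entirely of fixed points or of none, so $\mathcal{O}\cap\{0,\tfrac14,\tfrac12\}=\emptyset$, and also $\mathcal{O}\cap\{\tfrac16,\tfrac13,\tfrac{5}{12}\}=\emptyset$ because those three points are strictly preperiodic. Pick $p\in\mathcal{O}$ minimizing $|s-\tfrac13|$ over $s\in\mathcal{O}$ and set $\delta=|p-\tfrac13|>0$; the goal is $\delta<\tfrac1{12}$. If $\delta\ge\tfrac16$, then every point of $\mathcal{O}$ is at distance $\ge\tfrac16$ from $\tfrac13$, hence lies in $[0,\tfrac16]\cup\{\tfrac12\}$, hence in $(0,\tfrac16)$, where $\Gamma$ triples and so admits no periodic orbit --- a contradiction. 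Hence $\delta<\tfrac16$, and the linearization gives $\Gamma(p)=3\delta\in\mathcal{O}$, so minimality yields $|3\delta-\tfrac13|\ge\delta$, which forces $\delta\ge\tfrac16$ (excluded) or $\delta\le\tfrac1{12}$; and $\delta=\tfrac1{12}$ is impossible since that would give $p\in\{\tfrac14,\tfrac{5}{12}\}$. Therefore $\delta<\tfrac1{12}$, i.e.\ the closest-to-$\tfrac13$ point $p$ of the cycle lies in $(\tfrac14,\tfrac{5}{12})$.

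The step I expect to be the main obstacle is the casework in the first statement: one has to keep precise track of how the three branches move the pieces $[0,\tfrac16)$, $[\tfrac16,\tfrac14)$ and $(\tfrac{5}{12},\tfrac12]$ of $I\setminus[\tfrac14,\tfrac{5}{12}]$, and to isolate the recurring mechanism --- an orbit trapped in an interval on which $\Gamma$ is an \emph{expanding} affine map must sit at the unique fixed point --- which both pins down the degenerate exceptions and makes the periodic case, where such trapping is impossible, close cleanly.
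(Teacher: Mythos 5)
Your argument is correct and rests on the same elementary mechanism as the paper's terse two-line proof: examine the three affine branches of $\Gamma$ on $[0,\tfrac12]$ and show that an orbit avoiding $(\tfrac14,\tfrac5{12})$ is eventually trapped on a branch where $\Gamma$ is expanding, hence must sit at the unique fixed point there. But you are substantially more careful than the paper, and that care exposes a genuine (if benign) inaccuracy: the first assertion, as literally stated, fails for the points $\tfrac1{2\cdot3^k}$ with $k\ge1$, which never land on $\tfrac14$ yet are captured by the fixed point $\tfrac12$ and so never enter $(\tfrac14,\tfrac5{12})$. You correctly isolate these as the only exceptional orbits and note they are irrelevant downstream; in the paper this is implicitly handled separately since Lemma~\ref{l:diameter} excludes leaves of length $\tfrac16$ or $\tfrac12$ outside two specific laminations, so length~$\tfrac12$ never arises in an orbit of a closest-to-criticality leaf. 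For the second assertion, which the paper leaves as an unstated corollary of the first, you give a self-contained and preferable argument: minimality of $\delta=|p-\tfrac13|$ over the cycle plus the local linearization $\Gamma(\tfrac13+s)=3|s|$ gives $|3\delta-\tfrac13|\ge\delta$, forcing $\delta\le\tfrac1{12}$, with equality excluded because $\tfrac14$ is fixed and $\tfrac5{12}$ is strictly preperiodic. This route does not lean on the slightly-off first assertion, while retaining the same expansion-trap idea.
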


\begin{proof}
Clearly, $\Gamma(\frac{5}{12})=\Gamma(\frac14)=\frac14$. If $x\in
(\frac5{12}, \frac12)$ then $x$ will eventually map into $(\frac14,
\frac{5}{12})$. Now, if $x\in (0, \frac14)$ then $x$ will be eventually
mapped to $(\frac14, \frac12]$ which, by the previous sentence, implies the
desired.
\end{proof}

\begin{Lemma}\label{l:diameter}
For a lamination $\lam$, exactly one of the following holds:
\begin{enumerate}
  \item chords $\ol{0 \frac12}$, $\ol{\frac16 \frac13}$, $\ol{\frac23 \frac56}$ are leaves of $\lam$;
  \item chords $\ol{\frac14 \frac34}$, $\ol{\frac{11}{12}\frac{1}{12}}$, $\ol{\frac{5}{12} \frac{7}{12}}$ are leaves of $\lam$;
  \item no leaves of $\lam$ have length $\frac12$ or $\frac16$.
\end{enumerate}
\end{Lemma}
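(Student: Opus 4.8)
The plan is to dichotomize according to whether $\lam$ possesses a leaf forcing a diameter, and then to pin down everything via backward invariance. First I would record, from Remark~\ref{length-fn}(1), that $\Gamma(\tfrac12)=\tfrac12$ and $\Gamma(\tfrac16)=\tfrac12$; hence the $\si_3$-image of a leaf of length $\tfrac12$ again has length $\tfrac12$, and the $\si_3$-image of a leaf of length $\tfrac16$ has length $\tfrac12$. A chord of length $\tfrac12$ is a diameter, and two distinct diameters cross inside $\D$, so (L1) forces $\lam$ to contain at most one leaf of length $\tfrac12$. Consequently, either $\lam$ has no leaf of length $\tfrac12$ and none of length $\tfrac16$ — which is precisely alternative (3) — or, applying (D1) to a leaf of length $\tfrac12$ or $\tfrac16$ (its image has length $\tfrac12\ne0$, hence lies in $\lam$), we conclude that $\lam$ contains a diameter $D$; in this case I must show that alternative (1) or (2) holds.

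Next I would show that $D$ is $\si_3$-fixed and identify it. Every forward image $\si_3^k(D)$ is again a diameter and, by (D1), a leaf of $\lam$; these diameters pairwise do not cross by (L1), so they all coincide with $D$, i.e.\ $\si_3(D)=D$. Solving $\{3t,\,3t+\tfrac12\}=\{t,\,t+\tfrac12\}\pmod1$ shows that the only $\si_3$-invariant diameters are $\ol{0\,\tfrac12}$ (endpoints fixed) and $\ol{\tfrac14\,\tfrac34}$ (endpoints interchanged), exactly as noted in the text listing the four invariant leaves of $\si_3$.

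Finally I would use backward invariance to recover the remaining two leaves. By (D2)(2) applied to $\ell=D$ (its image $D$ being non-degenerate), $\lam$ contains a sibling collection $\{D,\ell_2,\ell_3\}$ of disjoint leaves, each mapping onto $D$. Any leaf mapping onto $D=\ol{a\,b}$ must join a point of $\si_3^{-1}(a)$ to a point of $\si_3^{-1}(b)$ (else its image would be degenerate), and $\si_3^{-1}(\{a,b\})$ consists of six points spaced $\tfrac16$ apart that alternate between $\si_3^{-1}(a)$ and $\si_3^{-1}(b)$; since $D$ joins two of these six points three apart, it splits the other four into two consecutive pairs, each pair containing one point of $\si_3^{-1}(a)$ and one of $\si_3^{-1}(b)$. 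The requirement that $\ell_2,\ell_3$ not cross $D$ then forces $\{\ell_2,\ell_3\}$ to be the two short leaves joining those pairs: for $D=\ol{0\,\tfrac12}$ they are $\ol{\tfrac16\,\tfrac13}$ and $\ol{\tfrac23\,\tfrac56}$, which is alternative (1); for $D=\ol{\tfrac14\,\tfrac34}$ they are $\ol{\tfrac5{12}\,\tfrac7{12}}$ and $\ol{\tfrac{11}{12}\,\tfrac1{12}}$, which is alternative (2). For exclusivity: (1) and (2) each contain a diameter, and $\ol{0\,\tfrac12}$, $\ol{\tfrac14\,\tfrac34}$ cross in $\D$, so (1) and (2) cannot both hold; and (3) contradicts both (1) and (2), each of which contains leaves of length $\tfrac12$ (and $\tfrac16$). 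Combined with the first paragraph, exactly one alternative holds. The main obstacle is the combinatorial step uniquely identifying the sibling collection of a fixed diameter; everything else is bookkeeping with the length function and (L1).
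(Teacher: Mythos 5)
Your proof is correct and follows essentially the same route as the paper's: observe that a leaf of length $\tfrac12$ or $\tfrac16$ forces, via forward invariance and the no-crossing axiom, a $\sigma_3$-fixed diameter, then identify the two possible fixed diameters and use backward invariance (D2)(2) to recover the short sibling leaves. You simply spell out two points the paper leaves terse — that a leaf of length $\tfrac16$ already forces a diameter by (D1), and the explicit six-point combinatorics pinning down the sibling collection — but the underlying argument is the same.
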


\begin{proof}
If $\lam$ has a leaf $\ell$ of length $\frac12$, then $\si_3(\ell)$ is of length $\frac12$.
Since $\ell$ and $\si_3(\ell)$ must not cross, we see that $\si_3(\ell)=\ell$.
Thus, either $\ell=\ol{0 \frac12}$ (which by properties of laminations forces
leaves $\ol{\frac16 \frac13}$ and $\ol{\frac23 \frac56}$), or
$\ell=\ol{\frac14 \frac34}$ (which forces leaves $\ol{\frac{11}{12}
\frac{1}{12}}, \ol{\frac{5}{12} \frac{7}{12}}$). This completes the proof.
\end{proof}

\section{Symmetric cubic laminations}


\subsection{Odd cubic polynomials} 
Let $o$ be the origin in $\C$; this is the point $(0,0)$ in the Cartesian 
coordinate system. 
We write $o$ instead of $0$ in order not to confuse $o$ with
 the point of the unit circle whose argument is $0$.

A cubic polynomial $f$ is \emph{odd} if $f(-z)=-f(z)$. If $f$ is odd, then
$f(z)=az^3+bz$ is linearly conjugate to a polynomial
$P_\lambda(z)=z^3+\lambda z$. Assume that $J(P_\la)$ is connected and locally
connected and consider the $\sigma_3$-invariant laminational equivalence
relation $\sim_{P_\la}=\sim$ (see Section~\ref{ss:ler}). On top of satisfying
the axioms (I1) - (I3) of Definition~\ref{d:si-inv-lam}, the laminational
equivalence relation $\sim$ is such that for any $\sim$-class $\mathbf{g}$
the set $\mathbf{-g}$ is a $\sim$-class. Thus, the relation $\sim$ is
invariant with respect to the rotation $\tau$ by $180^{\degree}$ about $o$,
and $\tau(A)=-A$ (we use both notations interchangeably). We study all
$\sigma_3$-invariant laminations that satisfy this additional property.
Observe that $\si_3$ and $\tau$ commute. Recall that we identify the unit circle
$\uc$ with $\R/\Z$ and parameterize it as $[0,1)$. In this
parametrization, the coordinates of the two endpoints of a diameter of $\uc$
differ by $\frac{1}{2}$, and the endpoints of $\ell$ and $-\ell$ differ by
$\frac{1}{2}$, too.

\begin{definition}[Symmetric laminations]\label{cs-lam} A $\sigma_3$-invariant
lamination $\lam$ is called a \emph{symmetric (cubic) lamination} if (D3)
$\ell \in \lam$ implies $-\ell\in \lam$.
\end{definition}{}

From now on by a ``symmetric lamination'' we mean a ``symmetric cubic
lamination'', by a ``symmetric set'' we mean a $\tau$-invariant set, and by
$\lam$ we mean a symmetric cubic lamination.

\subsection{Symmetric laminations: basic properties}



\begin{definition} A \emph{central symmetric gap/leaf $G$} is a $\tau$-invariant
gap/leaf $G$; evidently, such a set $G$ contains $o$ (in its interior if $G$ is a gap).
\end{definition}



\begin{Lemma}\label{cs-gap}
The following holds.
\begin{enumerate}
  \item There is an invariant central symmetric gap or leaf $\cg(\lam)$ of $\lam$ containing $o$.
If $\cg(\lam)$ is a leaf, then case $(1)$ or $(2)$ of Lemma \ref{l:diameter} holds.
If $\cg(\lam)$ is a gap, then two symmetric edges of $\cg(\lam)$ have length $\ge\frac13$
 while the other edges have length $<\frac16$.
  \item There are exactly two distinct critical sets of $\lam$.
  \item Rotating $\lam$ by $90^{\degree}$ about $o$ results in another
      invariant symmetric lamination.
\end{enumerate}
\end{Lemma}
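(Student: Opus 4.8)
The plan is to establish the three parts in order, using the commuting action of $\si_3$ and $\tau$ together with Theorem~\ref{t:prelam-cl} (compactness) and the length estimates of Remark~\ref{length-fn} and Lemma~\ref{l:diameter}.

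For part (1), I would first produce \emph{some} leaf or gap of $\lam$ that contains $o$. The origin $o$ lies in the interior of the disk, so it is either in a gap of $\lam$ or on a leaf of $\lam$ (or in no gap at all, but then the nondegenerate leaves through $o$ form a fan and one checks only two such can be disjoint after iterating — in fact there is a unique leaf through $o$ unless $o$ is a vertex of a gap). Call this set $G_0$. Now I would run an iteration/intersection argument: since $\si_3$ and $\tau$ commute, $\si_3(G_0) \cup \tau(\si_3(G_0))$ and similar unions are controlled, but more directly I would use the fact that the ``central'' objects through $o$ form a nested family under taking $\si_3$-images adjusted to stay through $o$, and use compactness of the space of $\si_3$-invariant laminations together with the length-function dynamics (Remark~\ref{length-fn}(4), every chord eventually has length $\ge \frac14$) to show there is an invariant such object $\cg(\lam)$. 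The key point is that a $\tau$-invariant leaf must be a diameter, hence have length $\frac12$, so by Lemma~\ref{l:diameter} it is $\ol{0\,\frac12}$ or $\ol{\frac14\,\frac34}$ and case (1) or (2) of that lemma holds. If instead $\cg(\lam)$ is a gap, then because it is $\tau$-invariant and $\si_3$-invariant, $\si_3|_{\partial \cg(\lam)}$ has degree one or more; I would argue it cannot separate $o$ from itself wrongly, and the two ``long'' symmetric edges (which must have length $\ge\frac13$ to enclose $o$ while being swapped or fixed in pairs, using that the complement arcs must be short) are forced, while all other edges have length $<\frac16$ since a leaf of length in $[\frac16,\frac13)$ together with its $\tau$-image and their $\si_3$-images would cross or force a different central configuration — here I would invoke Lemma~\ref{l:diameter}(3), that no leaves of length $\frac16$ exist unless we are in case (1) or (2).

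For part (2), the two critical sets: a critical leaf has length exactly $\frac13$, and the critical set containing it is either that leaf alone or a gap of degree $>1$ or mapping to a leaf/point. Because $\lam$ is symmetric, critical sets come in $\tau$-pairs unless a critical set is itself $\tau$-invariant; but a $\tau$-invariant critical set would contain $o$, hence equal $\cg(\lam)$, and I would check $\cg(\lam)$ is never critical in the relevant sense (its degree on the boundary is one). So critical sets come in genuine pairs $\{C, \tau(C)\}$. To see there is exactly one such pair, I count critical length: the total ``critical weight'' of a cubic lamination is $d-1 = 2$ (each critical leaf counts $1$, a critical gap of degree $k$ counts $k-1$), and symmetry forces $C$ and $\tau(C)$ to carry equal weight, so each carries weight $1$, i.e. each is a single critical leaf or a gap of degree $2$ — and there are exactly two of them. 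I would phrase this via the standard ``critical portrait'' counting (the number of disjoint preimage leaves of a generic leaf is $3$, giving two ``gaps'' worth of branching), citing the gap-invariance and degree theorems quoted before the statement.

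Part (3) is the cleanest: rotation by $90^\circ$ is $z \mapsto iz$ on $\uc$, i.e. $t \mapsto t + \frac14$ on $\R/\Z$; call it $\rho$. Since $\si_3(t+\frac14) = 3t + \frac34 = 3t - \frac14 = \si_3(t) - \frac14$, we get $\si_3 \circ \rho = \rho^{-1}\circ\si_3 = \rho^3 \circ \si_3$, and since $\rho^2 = \tau$, conjugating the invariance axioms (D1), (D2) of Definition~\ref{inv-lam} and the symmetry axiom (D3) by $\rho$ goes through: $\rho(\lam)$ again satisfies all of them (the sibling/backward-invariance conditions are preserved because $\rho$ is a circle homeomorphism commuting with $\tau$ and conjugating $\si_3$ to itself up to the rotation $\rho^{-2}=\tau$, which is already a symmetry of $\lam$). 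Closedness (L2) and non-crossing (L1) are obviously preserved by any rotation. So $\rho(\lam)$ is again a symmetric cubic lamination.

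The main obstacle I expect is part (1) — specifically, proving existence of an \emph{invariant} central object (not just a periodic or preperiodic one) and nailing down the exact length dichotomy for the edges of a central gap. The existence likely requires either a fixed-point/compactness argument over the orbit of the central object through $o$, or a direct analysis of how leaves through $o$ iterate under $\si_3$; the length bound on the non-long edges will need the observation that two disjoint $\tau$-symmetric leaves enclosing $o$, together with $\si_3$-invariance, leave no room for an edge of intermediate length $[\frac16,\frac13)$ without producing a crossing or collapsing to the configurations of Lemma~\ref{l:diameter}.
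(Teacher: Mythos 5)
Your parts (2) and (3) are essentially sound and match the paper's (very brief) treatment: the degree/criticality count across the two symmetric long arcs for (2), and conjugation of the axioms by the rotation $\rho(t)=t+\frac14$ together with $\rho^2=\tau$ for (3), where your explicit check $\si_3\circ\rho = \rho^{-1}\circ\si_3$ is exactly the relevant computation. However, you have correctly anticipated that part (1) is where the work is, and there your proposal has a genuine gap and also goes down an unnecessary path.

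The existence of an \emph{invariant} central object does not need a compactness or nested-family argument over the space of laminations; it is a one-move observation. Since $o$ lies in the interior of $\disk$, there is a unique gap or leaf $G$ of $\lam$ with $o\in G$ (distinct gaps have disjoint interiors, and distinct chords through $o$ are crossing diameters). Because $\lam$ is symmetric, $\tau(G)$ is also a gap/leaf containing $o$, so uniqueness forces $\tau(G)=G$. If $G$ is a leaf it is therefore a diameter, and Lemma~\ref{l:diameter} gives case (1) or (2). If $G$ is a gap, then $\tau$-invariance puts a (diagonal) diameter inside $G$; since $\si_3$ sends any diameter $\ol{t\;(t+\frac12)}$ to the diameter $\ol{3t\;(3t+\frac12)}$, the image $\si_3(G)$ again contains a diameter, hence contains $o$, and uniqueness gives $\si_3(G)=G$. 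That is all that is needed. Your proposal never actually establishes $\tau(G)=G$ and then invokes ``compactness of the space of $\si_3$-invariant laminations'' plus ``length-function dynamics'' to manufacture invariance, but no concrete argument is supplied and the compactness of the space of laminations (Theorem~\ref{t:prelam-cl}) is not the relevant device here — you are analysing a single, fixed lamination.

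Similarly, the length dichotomy is not ``forced'' in the way you gesture at (via Lemma~\ref{l:diameter}(3) and crossings). The paper's actual mechanism is the arc-tripling argument on the components of $\uc\sm\cg(\lam)$: an arc of length $<\frac13$ maps one-to-one onto an arc of triple the length, and since these arcs are permuted by $\si_3$ (as $\cg(\lam)$ is invariant) with total length $\le 1$, not all of them can be short, so some arc has length $\ge\frac13$; by $\tau$-symmetry (and since no arc of $\uc$ can be $\tau$-invariant) there are two such arcs, and the remaining arcs must then all have length $<\frac16$, because an intermediate-length edge either overfills the total length budget or, via $\Gamma$, produces under invariance a diameter edge of $\cg(\lam)$, which is impossible since $o$ is in the interior of $\cg(\lam)$. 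I would rework part (1) along these lines; once $\si_3(\cg(\lam))=\cg(\lam)$ and the length dichotomy are in place, your sketch of (2) goes through as stated.
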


\begin{proof}
(1) Set aside cases (1) and (2) of Lemma \ref{l:diameter}. Choose a gap $G$
containing $o$ it its interior. Clearly $\tau(G)$ is a gap of $\lam$
too, thus $\tau(G)=G$, and $G$ contains diagonal diameters. It follows that
$\si_3(G)$ also has diagonal diameters and hence $\si_3(G)=G$. Consider now
how the length of circle arcs that are components of $\uc\sm G$ changes as we
apply $\si_3$. If an arc $(a, b)$ like that is of length less than $\frac13$,
then it maps one-to-one onto the arc $(\si_3(a), \si_3(b))$ and its length
triples. Hence, 
there exists an arc-component of $\uc\sm G$ of
length greater than or equal to $\frac13$. The rest easily follows.

(2) By (1), two symmetric circle arcs of length at least $\frac13$ are subtended by
edges of $\cg(\lam)$. They must contain two distinct critical sets of $\lam$.

(3) This claim follows from Definition \ref{inv-lam} and property (D3) of
Definition \ref{cs-lam} (that is, symmetry of $\lam$).
\end{proof}

From now on $\cg(\lam)$ denotes the invariant central symmetric gap of
$\lam$. Let $M=\ol{ab}$ be an edge of $\cg(\lam)$ of length $\|M\|\ge
\frac13$ (see Lemma \ref{cs-gap}) and the circle arc $H=(a, b)$
contains no vertices of $\cg(\lam)$. A sibling $M'$ of $M$ with endpoints
in $H$ is medium ($M$ is long). Finally, observe that, by Definition
\ref{siblings}, a non-critical leaf $\ell$ of $\lam$ has 2 siblings, and two
sibling leaves of the same kind have the same length. Lemma \ref{l:sml} is
straightforward and left to the reader.

\begin{Lemma}\label{l:sml}
The possibilities for chords in a sibling collection are
\begin{description}
    \item[(sss)] all chords are \textit{short};
    \item[(mmm)] all chords  are \textit{medium};
    \item[(sml)] one leaf is \textit{short}, one \textit{medium}, and one
        \textit{long}.
\end{description}
A sibling collection is completely determined by its type and one leaf.
\end{Lemma}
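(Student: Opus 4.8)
The plan is to reduce everything to a calculation with the length function $\Gamma$ from Remark~\ref{length-fn}. Recall that a sibling collection consists of three disjoint chords $\ell_1,\ell_2,\ell_3$ with the same non-degenerate image $\si_3(\ell_1)$; since the three chords are disjoint, their endpoints are arranged so that the six endpoints appear in the cyclic order $a_1,a_2,a_3,b_3,b_2,b_1$ around $\uc$ (up to relabeling), i.e. the three chords are ``nested/parallel'' and partition $\uc$ into arcs. The key numerical fact is that all three have the same image length $\|\si_3(\ell_i)\| = \Gamma(\|\ell_i\|)$, and that any chord has length in $(0,\tfrac12]$ with critical chords being exactly those of length $\tfrac13$. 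By Remark~\ref{length-fn}(1), $\Gamma(t)=\gamma(3t)$, so $\Gamma$ is injective on each of the three subintervals $(0,\tfrac16)$, $(\tfrac16,\tfrac13)$, $(\tfrac13,\tfrac12]$ (the ``short'', ``medium'', ``long'' ranges), and in fact $\Gamma$ maps each of these bijectively onto $(0,\tfrac12)$ or $(0,\tfrac12]$.

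First I would observe that the three chords cannot all lie in the interiors of arbitrary ranges independently: because they are disjoint and parallel, a short chord ``hugs'' the circle, a long chord is nearly a diameter, and at most one chord can be long (two disjoint long chords would have to cross, as two chords of length $>\tfrac13$ are linked unless they share region — more precisely, the complement of a long chord is an arc of length $<\tfrac13$, which cannot contain both endpoints of another long chord). Similarly I would count: the total ``turning'' argument. A cleaner route is to use the standard fact that for a sibling collection the three chords, together with the arcs between consecutive endpoints, satisfy: each of the three chords $\ell_i$ subtends (on the appropriate side) an arc whose $\si_3$-image wraps around; summing lengths of the relevant arcs gives $1$. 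Concretely, writing the gap/partition structure, the three chords divide $\uc$ into arcs whose lengths on one chosen side of each chord sum to $1$, and under $\si_3$ each maps $1$-to-$1$ to the corresponding side-arc of the common image; this forces $\|\ell_1\| + \|\ell_2\| + \|\ell_3\|$ together with small-arc corrections to equal an integer, pinning down which ranges the three lengths fall into.

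The concrete case analysis I expect is: if no chord is critical, look at how many chords have length $>\tfrac13$ (long). If none are long, all three lie in $(0,\tfrac13]$; but $\Gamma$ restricted to $(0,\tfrac16)$ and to $(\tfrac16,\tfrac13)$ is injective, and the preimage of a given value $v\in(0,\tfrac12)$ under $\Gamma$ consists of exactly the points $\tfrac{v}{3}$, $\tfrac13-\tfrac{v}{3}$ (wait, $\tfrac13 \pm$), $\tfrac13+\tfrac{v}{3}$, $\tfrac23-\tfrac{v}{3}$... within $(0,\tfrac12]$ there are exactly three preimages, one in each of the three length-ranges — this is the heart of the matter and gives exactly the (sss), (mmm), or (sml) trichotomy depending on which preimages are realized by disjoint chords. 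I would check that $\{$short preimage, medium preimage$\}$ cannot both be chosen together with a third chord unless the third is the long one (that's (sml)), and that two shorts force a third short (that's (sss)) and likewise two mediums force a third medium (that's (mmm)); the excluded mixed possibilities (e.g. two shorts and a medium, or short+medium+medium) are ruled out because the three $\si_3$-preimages of a single point $\si_3(\ell)$ of given length are at fixed circular positions, so once the type is (sss) all three chords must be literally the three short siblings, etc. Finally, ``a sibling collection is completely determined by its type and one leaf'': given one leaf $\ell$ and the declared type, the common image $\si_3(\ell)$ is determined, hence its three preimages (one per range) are determined as specific chords, and the type tells us whether we take the three same-range preimages (for sss/mmm) or the one-of-each triple (for sml) — in either case the whole collection is pinned down.

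The main obstacle will be making the ``disjointness forces parallel/nested'' and ``at most one long chord'' steps fully rigorous, i.e. the combinatorial-geometry bookkeeping that translates ``three disjoint chords with a common image under a $3$-to-$1$ circle map'' into a clean statement about where the three lengths sit; once that structure is in hand, the rest is the elementary $\Gamma$-preimage computation sketched above, which is why the authors call it ``straightforward and left to the reader.''
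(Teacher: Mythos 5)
Your plan---pin down possible leaf lengths via the structure of $\Gamma^{-1}$ and then count which triples can be realized by disjoint chords---is a sensible way to organize this, but the two opening geometric claims are wrong, and that is exactly where the content of the lemma lives. First, the six preimage endpoints never occur in the ``nested'' cyclic order $a_1,a_2,a_3,b_3,b_2,b_1$. If $\si_3(\ell)=\ol{pq}$ is non-degenerate then $\si_3^{-1}(p)=\{a,a',a''\}$ and $\si_3^{-1}(q)=\{b,b',b''\}$ are each three equally spaced points at mutual distance $\frac13$, offset from one another by a positive amount, so the two triples must interleave: the cyclic order is always $a<b<a'<b'<a''<b''$ up to relabeling. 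In particular an (sss) collection is three disjoint but entirely \emph{un}-nested chords, each a $\frac13$-rotation of the others, so the nested picture you start from is never correct. Second, ``two disjoint long chords would have to cross'' is false (e.g.\ $\ol{(0)(0.4)}$ and $\ol{(0.45)(0.85)}$ are disjoint and both long); what is true, and is what the lemma actually needs, is that the three preimage chords of $\si_3(\ell)$ of the long length pairwise cross, and this is read off only from the alternating order.

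With the alternating order in hand the lemma is a finite enumeration and the detour through $\Gamma$ is optional. There are nine chords $\ol{a_ib_j}$ over a fixed non-degenerate image, three of each length, and a sibling collection is a non-crossing perfect matching of $\{a_0,a_1,a_2\}$ with $\{b_0,b_1,b_2\}$. Given $a_0<b_0<a_1<b_1<a_2<b_2$ there are exactly five such matchings: $\{\ol{a_ib_i}\}$ (all short, type sss); $\{\ol{a_ib_{i-1}}\}$ (all medium, type mmm); and the three rotations of $\{\ol{a_0b_0},\ol{a_1b_2},\ol{a_2b_1}\}$ (one of each length, type sml). That enumeration is the whole proof and immediately gives the final assertion, since any one chord together with its declared type picks out a unique matching from the five. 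Your $\Gamma$-preimage count is a correct auxiliary fact, but it only narrows the possible \emph{lengths}---there are three distinct chords of each length over a fixed image---so the matching analysis is unavoidable; the genuine gap in the write-up is that the underlying cyclic configuration is misidentified, and both ``rigorization obstacles'' you flag are in fact false statements rather than statements awaiting rigor.
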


If a sibling collection has a long leaf, the collection is of type (sml).
Sibling collections of type (sss) of (mmm) partition the disk into 4
components (a ``central'' one and three ``side'' ones that can all be
obtained from each other by rotations by $\frac13$ and $\frac23$) while
collections of type (sml) partition the disk into three components with no
rotational symmetry.

\begin{definition}\label{d:sibli}
Suppose that $\ell=\ol{ab}$ is a non-critical chord which is not a diameter
and the arc $(a, b)$ is shorter than the arc $(b, a)$. Denote the chord
$\ol{(a+\frac13) (b-\frac13)}$ by $\ell'$ and the chord $\ol{(a+\frac23)
(b-\frac23)}$ by $\ell''$.
\end{definition}

As $\si_3(\ell')=\si_3(\ell'')=\si_3(\ell)$, $\{\ell, \ell', \ell''\}$ is a
sibling collection. For a long/medium non-critical 
chord $\ell$ it follows that $\ell'$ is long/medium and $\ell''$ is small;
if, moreover, $\ell\in \lam$ (recall that $\lam$ is a symmetric lamination),
its sibling collection is $\{\ell, \ell', \ell''\}$ (all other possibilities
lead to crossings with $\ell$ or $-\ell$). So, a sibling collection of type
(mmm) is impossible.

\begin{definition}\label{d:strip} Let $\ell$ and $\ell'$ be two disjoint chords of $\uc$.
Consider the component of $\cdisk\sm [\ell\cup\ell']$ between $\ell$ and $\ell'$.
The closure $\mathcal{S}(\ell,\ell')$ of this component is called the
\emph{strip between the chords $\ell$ and $\ell'$}.
The strip $\mathcal{S}(\ell,\ell')$ is bounded by the leaves $\ell$
and $\ell'$ and two arcs of $\uc$; define the \emph{width} of the strip
$\mathcal{S}(\ell,\ell')$ to be the length of the larger of those two arcs.
\end{definition}

\begin{definition} [Short strips]
For a sibling collection $\{\ell,\ell', \ell''\}$ of type (sml), with $\ell$
 and $\ell'$ long/medium, set $C(\ell)=\mathcal{S}(\ell,\ell')$.
The set $C(\ell)$ has width $|\frac13-\|\ell\| |$ (and so does $-C(\ell)$).
Given a long/medium chord $\ell\in \lam$, call the region
$\sh(\ell)=C(\ell)\cup -C(\ell)$ the \emph{short strips (of $\ell$)} and each
of $C(\ell)$ and $-C(\ell)$ a \emph{short strip (of $\ell$).}
The width of $C(\ell)$ will also be referred to as the width of $\sh(\ell)$).
Note that $-C(\ell)=C(-\ell)$.
\end{definition}

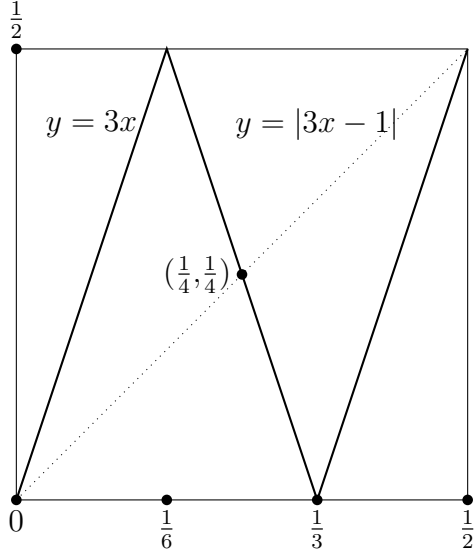
\begin{figure}[ht]
\centering
    \begin{tikzpicture}
\draw (0,0) -- (6,0) -- (6,6) -- (0,6) -- (0,0);
\draw [thick](0,0) -- (2,6) -- (4,0) -- (6,6);
\draw [dotted](0,0) -- (6,6);
          \fill[black] (0,0) circle[radius=2pt] ++(0:0em) node [below]{0};
          \fill[black] (6,0) circle[radius=2pt] ++(0:0em) node [below]{$\frac{1}{2}$};
          \fill[black] (0,6) circle[radius=2pt] ++(0:0em) node [above]{$\frac{1}{2}$};
          \fill[black] (2,0) circle[radius=2pt] ++(0:0em) node [below]{$\frac{1}{6}$};
          \fill[black] (4,0) circle[radius=2pt] ++(0:0em) node [below]{$\frac{1}{3}$};
          \fill[black] (3,3) circle[radius=2pt] ++(0:0em) node [left]{($\frac{1}{4}$,$\frac{1}{4}$)};
          \node (A) at (4,5) {$y=|3x-1|$};
          \node (B) at (7/2,3/2) {};
          \node (C) at (9/2,3/2) {};
           \node (D) at (1,5) {$y=3x$};
           \node (E) at (1/2,3/2) {};

\end{tikzpicture}
    \caption{Graph of the length function $\Gamma$. Length $\|\ell\|$ of a leaf $\ell$
    is on $x$-axis and length $\Gamma(\|\ell\|)$ of the image leaf $\sigma_3(\ell)$ is on the $y$-axis. }

\end{figure}

Here are properties of short strips $\sh(\ell)$ of a long/medium non-critical
chord $\ell$.

(a) The short strip $C(\ell)$ is bounded by a chord $\ell$ and its sibling
$\ell'$ and the short strip $-C(\ell)$ is bounded by the chord $-\ell$ and
its sibling $-\ell'$. All these chords are long/medium.

(b) Any critical chord of $\uc$ that does not cross any of the four chords
$\{\ell, \ell',-\ell, -\ell'\}$ lies inside a short strip of $\ell$.

(c) Any chord or gap in the complement of $\sh(\ell)$ maps $1$-to-$1$ onto
its image.

(d) If $\lam$ is a symmetric lamination and $\ell\in \lam$ is long/medium,
then any leaf that is closer than $\ell$ to criticality is contained in
$\sh(\ell)$.

(e) For two leaves of $\lam$, their short strips, if exist, are nested.

The next lemma will be applied to leaves of laminations or in similar cases.
However, it holds for any chords.

\begin{Lemma}[\textbf{Short Strip Lemma}]\label{l:str}
Let $\ell=\ell_0$ be a chord, and set $L=\|\ell\|>\frac16$,
$\ell_i=\sigma_3^i(\ell)$, $L_i=\|\ell_i\|$.
Take the minimal positive integer $k$ such that $\ell_k$ intersects the interior of $\sh(\ell)$.

\begin{enumerate}
  \item We have $L_k>w(C(\ell))$.
  If $\ell_k$ does not cross the edges of $\sh(\ell)$, then $\ell_k$ is closer to criticality than $\ell$
  (and so $\ell_k$ is long/medium).
  \item If $L=\frac14$, and $\ell$ is a leaf of a cubic symmetric lamination $\lam$, then either
  $\ell\in\{\ol{\frac18 \frac38},\ol{\frac58 \frac78}\}\subset \lam$,
  or $\ell\in\{\ol{\frac78 \frac 18},\ol{\frac38 \frac58}\}\subset \lam$. 
  \item If $L>\frac14$, then $L_k>3w(C(\ell))$.
 \item If a chord $\ell$ is the closest to criticality in its forward orbit,
  then $\ell$ is long/medium, and no forward image of $\ell$ enters the interior of $\sh(\ell)$.
\end{enumerate}
\end{Lemma}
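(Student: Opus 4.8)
The four claims are proved in order; the main inputs are the length function $\Gamma$ together with Remark~\ref{length-fn}, the properties (a)--(e) of short strips recorded above, and Lemmas~\ref{l:closest}, \ref{l:diameter}, \ref{l:sml}. Throughout I write $w=w(C(\ell))=|\tfrac13-L|$.

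\textbf{Claim (1), the engine.} The key elementary facts are that, for a long/medium non-critical chord $\ell$, one has $\|\ell_1\|=\Gamma(L)=|3L-1|=3\,|\tfrac13-L|=3\,w$, and that $\sigma_3$ carries $C(\ell)$ onto the cap of $\disk$ cut off by $\ell_1$ along its shorter subtended arc: both $\ell$ and $\ell'$ map onto $\ell_1$, and the two width-$w$ boundary arcs of $C(\ell)$ map onto the shorter arc of $\ell_1$; symmetrically for $-C(\ell)$ and $-\ell_1$. By minimality of $k$, each of $\ell_0,\dots,\ell_{k-1}$ meets $\sh(\ell)$ at most in its boundary, so by (c) (and since $\ell$ itself is non-critical) $\sigma_3$ restricts to a one-to-one, non-folding map on each of these chords and $L_{i+1}=\Gamma(L_i)$ for $i<k$. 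I would then analyze how $\ell_k$ first enters $\mathrm{int}\,\sh(\ell)$: after passing to the $\tau$-image if necessary, $\ell_k$ meets $\mathrm{int}\,C(\ell)$, and either $\ell_k$ crosses one of the long/medium edges $\ell,\ell',-\ell,-\ell'$ of $\sh(\ell)$, or both endpoints of $\ell_k$ lie on the width-$w$ boundary arcs of the two strips. In the latter case a direct circular-arc estimate --- using that the two boundary arcs of $C(\ell)$ have length exactly $w$ and sit at opposite ends of the strip, while the two bounding chords have length $\ge\tfrac16$ --- gives $\|\ell_k\|>w$ together with $|\tfrac13-\|\ell_k\||<w=|\tfrac13-L|$; thus $\ell_k$ is long/medium and strictly closer to criticality than $\ell$, which is claim (1).

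\textbf{Claims (3) and (2).} For (3), assume $L>\tfrac14$, so that $3w=|3L-1|<\tfrac12$ and the cap $\sigma_3(C(\ell))$ is a proper cap of $\disk$; I would then pull $\ell_k$ back one further step along the non-folding branch furnished by (c) and sharpen the arc estimate of (1) at the level of $\ell_{k-1}$, where the extra factor of $3$ comes from one more application of $\sigma_3$ (which triples the length of a short chord), after excluding the edge-crossing configurations exactly as in (1). For (2), first dispose of cases (1)--(2) of Lemma~\ref{l:diameter}, in which the length-$\tfrac14$ leaves of $\lam$ are forced to be precisely the listed pairs, and thereafter assume case (3). By Remark~\ref{length-fn}(2), $\|\ell\|=\tfrac14$ forces $\|\sigma_3^n(\ell)\|=\tfrac14$ for all $n$, so every iterate of $\ell$ is a chord $\ol{a\,(a+\tfrac14)}$; but two length-$\tfrac14$ chords $\ol{a\,(a+\tfrac14)}$ and $\ol{b\,(b+\tfrac14)}$ are non-crossing iff $b-a\in\{0\}\cup[\tfrac14,\tfrac34]$, so the orbit of the leaf $\ell$ contains at most four distinct leaves, and symmetry puts $-\ell,\sigma_3(\ell),-\sigma_3(\ell)$ in $\lam$ as well, all of length $\tfrac14$. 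Enumerating: if $\ell$ is an edge of the square $\mathrm{conv}\{0,\tfrac14,\tfrac12,\tfrac34\}$, then all four of its edges lie in $\lam$ and the square is an invariant gap on whose boundary $\sigma_3$ fixes $0,\tfrac12$ and swaps $\tfrac14\leftrightarrow\tfrac34$, reversing the cyclic order of the vertices --- contradicting invariance; if $\ell\in\{\ol{\tfrac18\tfrac38},\ol{\tfrac58\tfrac78}\}$ or $\ell\in\{\ol{\tfrac38\tfrac58},\ol{\tfrac78\tfrac18}\}$ we obtain exactly one of the two asserted configurations; and any strictly preperiodic length-$\tfrac14$ leaf maps after one step to a fixed length-$\tfrac14$ leaf and is then found to cross the $\tau$-image of that leaf (also forced into $\lam$), a contradiction.

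\textbf{Claim (4) and the main obstacle.} Claim (4) follows from (1): by Remark~\ref{length-fn} a short chord always has a strictly closer image, so $\ell$ must be long/medium; and if some $\ell_k$ met $\mathrm{int}\,\sh(\ell)$, take $k$ minimal --- for a leaf of $\lam$ the edges $\ell,\ell',-\ell,-\ell'$ of $\sh(\ell)$ are themselves leaves of $\lam$ (Lemma~\ref{l:sml} and symmetry), so $\ell_k$ cannot cross them, and hence by (1) $\ell_k$ would be strictly closer to criticality than $\ell$, contradicting that $\ell$ is closest in its orbit. The step I expect to be the main obstacle is the geometric case analysis inside (1) and its refinement in (3): pinning down which chord can be the first iterate to meet $\mathrm{int}\,\sh(\ell)$, separating the ``crosses an edge of $\sh(\ell)$'' case from the ``leans on a width-$w$ boundary arc'' case, and extracting the sharp inequalities $\|\ell_k\|>w$ (resp.\ $>3w$) and $|\tfrac13-\|\ell_k\||<|\tfrac13-L|$ uniformly as $\|\ell\|$ ranges over the medium and long bands, where the shape of the strip $C(\ell)$ and of its image cap changes.
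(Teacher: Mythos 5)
Your approach to the heart of the lemma, claim (1), is genuinely different from the paper's and, as you yourself flag, you have not actually carried it out. You propose a geometric case split (``$\ell_k$ either crosses an edge of $\sh(\ell)$ or has both endpoints on the two width-$w$ boundary arcs'') followed by a ``direct circular-arc estimate,'' and then say that pinning down this estimate uniformly over the medium/long bands is ``the main obstacle.'' That is precisely the step the paper sidesteps entirely: the published proof works purely at the level of lengths, using the length function $\Gamma$. It supposes $L_k\le t:=w(C(\ell))$, takes the \emph{least} $j$ with $L_j\le t$ (so $0<j\le k$), and reads off from the piecewise-linear form of $\Gamma$ that $L_{j-1}$ equals either $L_j/3$ or $\frac13\pm\frac{L_j}{3}$. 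Minimality of $j$ kills the first option ($L_j/3<t$), so $|L_{j-1}-\frac13|=\frac{L_j}{3}<t$, i.e.\ $\ell_{j-1}$ is strictly closer to criticality than $\ell$. Any chord closer to criticality than $\ell$ meets $\mathrm{int}\,\sh(\ell)$ (it either crosses an edge of $\sh(\ell)$, or by property (b) the nearby critical chord --- and hence the chord itself --- lies in $\sh(\ell)$), contradicting the minimality of $k$. No arc geometry is needed. For (3) the paper literally reruns that same length argument with $3t$ in place of $t$, using $L>\frac14\Rightarrow L>3t$; your alternative of ``pulling $\ell_k$ back one step and sharpening the arc estimate'' is again a different route left unexecuted.

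There is also a concrete gap in your claim (4). Claim (4) is stated for an arbitrary \emph{chord} $\ell$ that is closest to criticality in its forward orbit; but your argument invokes ``for a leaf of $\lam$ the edges $\ell,\ell',-\ell,-\ell'$ of $\sh(\ell)$ are themselves leaves of $\lam$, so $\ell_k$ cannot cross them,'' which uses a hypothesis not present in the statement. Without $\ell\in\lam$ you cannot rule out the edge-crossing case this way. The paper instead invokes Lemma~\ref{l:closest} to place $L\in[\frac14,\frac5{12}]$ and then applies (1) directly. Your treatment of (2) is essentially sound though more circuitous than the paper's two-line argument that the edges of $\sh(\ell)$ leave room for only the two length-$\frac14$ leaves $\ell,-\ell$, forcing $\si_3(\ell)\in\{\ell,-\ell\}$.
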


\begin{proof} (1) The leaf $\ell$ and its sibling $\ell'$ form a part of the boundary of
$C(\ell)$. 
Note that $w(C(\ell))=|\frac13-L|=t<\frac16$. We claim that $L_k>t$. Otherwise, choose
the least $j$ with $L_j\le t$; then $0<j$ (because $L>\frac16>t$) and $j\le k$
by the assumption. By the properties of $\Ga$, either 
$L_{j-1}=\frac{L_j}{3}$, or 
$L_{j-1}=\frac13\pm\frac{L_j}{3}$.
By the choice of $j$, 
 the former is impossible.
Now, if 
 the latter holds, then
$|L_{j-1}-\frac13|=\frac{L_j}{3}<t$, and so $\ell_{j-1}$ is closer to
criticality than $\ell$, a contradiction with $\ell_{j-1}$ being disjoint
from the interior of $\sh(\ell)$. Thus, $L_k>t$. The last claim of the lemma
is immediate.


(2) If $|\ell|=\frac14$, then the edges of $\sh(\ell)$ partition $\cdisk$ into
components so that the only two leaves of $\lam$ of length $\frac14$ are
$\ell$ and $-\ell$. Since $\|\si_3(\ell)\|=\frac14$, it follows that either
$\si_3(\ell)=\ell,$ $\si_3(-\ell)=-\ell$
 (then $\ell\in\{\ol{\frac18 \frac38},\ol{\frac58 \frac78}\}\subset \lam$) or
 $\si_3(\ell)=-\ell,$ $\si_3(-\ell)=\ell$
 (then $\ell\in\{\ol{\frac78 \frac 18},\ol{\frac38 \frac58}\}\subset \lam$).

(3) The argument is similar to (1), with one difference. In (1), we find a
moment before $k$ such that the length of the chord \emph{drops} to $t$ or
less. This works out because $L>\frac16>t$ and hence the desired
moment is not $0$. 
To prove (3) it suffices to
observe that since now $L>\frac14$ then $L>3w(C(\ell))=1-3L$; hence,
repeating the arguments from (1), but replacing in them $t$ by $3t$, we will
come to the same conclusion.

(4) By Lemma \ref{l:closest}, $\frac{5}{12}\ge |\ell\|\ge \frac14$. Now (1)
implies the desired.
\end{proof}

\begin{figure}[]
\centering
\includegraphics[width=0.6\linewidth, height=0.3\textheight]{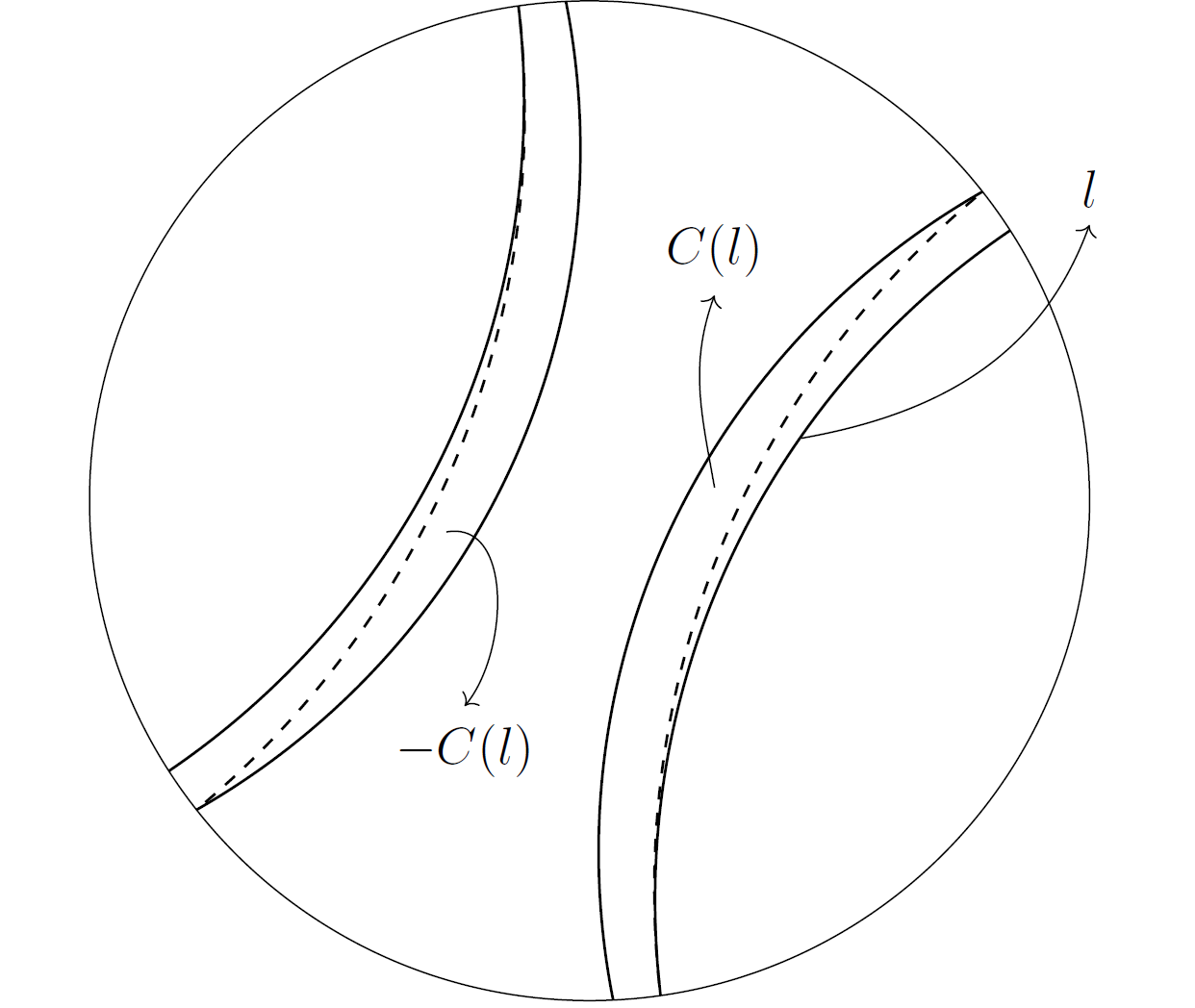}
    \caption{Short strips $C(l)\cup -C(l)$. Dashed line is a critical chord inside the strip.}
\end{figure}

For a gap, by \emph{collapsing} we mean mapping to a leaf or a point. In the
case of symmetric laminations, by Lemma \ref{cs-gap} there are two distinct
critical sets of $\lam$, hence collapsing to a point is impossible.

\begin{theorem}[\textbf{No Wandering Triangles}]\label{nwt-thm} Let $\lam$ be
a symmetric lamination and $G$ be a gap of $\lam$. If $G$ does not eventually
collapse, then $G$ is preperiodic.
\end{theorem}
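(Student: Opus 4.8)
The plan is to adapt Thurston's classical no-wandering-triangle argument to the symmetric cubic setting, using the key feature of symmetric laminations that collapsing to a point is impossible (by Lemma \ref{cs-gap}, there are exactly two distinct critical sets). First I would reduce to the case that $G$ is a \emph{wandering} gap, i.e.\ all the iterated images $\si_3^n(G)$ are pairwise disjoint nondegenerate gaps with pairwise disjoint interiors; if $G$ does not eventually collapse and is not preperiodic, this is precisely the situation. I would then invoke the standard area/length estimate: since the $\si_3^n(G)$ have disjoint interiors inside $\disk$, the sum of their ``sizes'' is finite, so the sizes $\to 0$; in particular, if we measure the total length of the boundary arcs, only finitely many images of $G$ can have boundary (circle) measure bounded below. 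This forces all but finitely many of the gaps $\si_3^n(G)$ to be ``thin'', which is what drives the contradiction.

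The next step is the critical-set bookkeeping. A wandering gap can only avoid being preperiodic if it interacts with critical data: along the forward orbit of $G$ one needs infinitely many ``folds'', but each fold at step $n$ means $\si_3|_{\si_3^n(G)}$ is not injective, which forces a critical chord to lie in $\si_3^n(G)$ or to be an edge that gets identified. Since $\lam$ is symmetric, critical chords come in the $\tau$-symmetric pair inside $\sh$ of the central gap $\cg(\lam)$, and by Lemma \ref{cs-gap}(2) there are exactly two critical sets; hence only finitely many of the $\si_3^n(G)$ can be critical (precritical preimages of the critical sets that are themselves in the orbit), so beyond some point $\si_3|_{\si_3^n(G)}$ is injective. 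Here I would use the Short Strip Lemma (Lemma \ref{l:str}): once the orbit of $G$ stays away from the short strips of the relevant long/medium leaves, the diameters/diagonals of the $\si_3^n(G)$ map one-to-one to diagonals of their images, so Lemma \ref{l:adiag-1} applies and a never-collapsing diagonal of a wandering gap would eventually produce crossing images — contradicting (L1). The alternative, that some diagonal collapses, is exactly ``eventually collapse,'' which is excluded by hypothesis (and collapsing to a point is impossible by symmetry).

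Putting these together: a non-preperiodic, non-collapsing gap $G$ would be a genuine wandering gap whose forward orbit, after finitely many steps, consists of gaps mapped homeomorphically onto their images and avoiding all critical short strips; its diagonals then persist as diagonals (Lemma \ref{l:adiag-1} and the first paragraph of its proof) and, being confined to smaller and smaller gaps while their endpoints are permuted along an infinite orbit, must eventually cross one another — impossible in a lamination. Hence $G$ is preperiodic. The main obstacle I anticipate is the combinatorial control in the middle step: precisely quantifying how many times the orbit of $G$ can meet a critical short strip, and ruling out a ``Siegel-type'' scenario in which the orbit accumulates on a critical object without ever landing on it; handling that cleanly will likely require invoking Theorem \ref{t:gaps-1} together with the length-function estimates of Remark \ref{length-fn} and Lemma \ref{l:closest}, showing that the sizes $L_n=\|\si_3^n(\partial G\text{-edges})\|$ cannot shrink to $0$ the way the area estimate demands unless the orbit is eventually periodic.
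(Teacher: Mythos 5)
Your proposal does not follow the paper's route, and the route you sketch has a genuine gap at the decisive step. The paper's proof is much more direct: reduce to a wandering triangle $G$, let $d_n$ be the shortest edge length of $G_n=\si_3^n(G)$, note $d_n\to 0$, and consider the subsequence of \emph{record minima} times $n_i+1$ at which $d_{n_i+1}$ is smaller than all previous values. For the next record to be set, the preimage edge $\ell_{n_{i+1}}$ of $G_{n_{i+1}}$ must be closer to criticality than $\ell_{n_i}$, hence lies in the short strip $\sh(\ell_{n_i})$; that forces $G_{n_{i+1}}$ into a strip of width $\tfrac13 d_{n_i+1}$, so $G_{n_{i+1}}$ already has an edge shorter than $d_{n_i+1}$, contradicting that the record was first broken only at time $n_{i+1}+1$. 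Nothing in this argument uses diagonals, Lemma \ref{l:adiag-1}, caterpillar/Siegel classification, or Theorem \ref{t:gaps-1}.

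The hole in your plan is the final step where you try to extract a contradiction from diagonals: you write that the diagonals of a wandering gap ``must eventually cross one another.'' That conclusion does not follow. For a \emph{wandering} gap the sets $G_0, G_1, G_2,\dots$ have pairwise disjoint interiors, so a diagonal of $G_n$ and a diagonal of $G_{n+k}$ live in disjoint convex regions and never cross. The crossing phenomenon you are importing from Lemma \ref{l:adiag-1} is specific to a \emph{periodic} (Siegel) gap, where the first-return map carries the gap back onto itself and a diagonal can come back linked with itself; that structure simply is not present for a wandering gap. Likewise, Theorem \ref{t:gaps-1} concerns infinite \emph{periodic} gaps, so invoking it in the ``Siegel-type accumulation'' step does not apply to the orbit of a wandering triangle. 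Your first two reductions (area/length makes sizes go to $0$; at most finitely many $G_n$ are precritical, since there are exactly two critical sets by Lemma \ref{cs-gap}) are sound and in fact implicit in the paper, but they are the easy part. You also flag the ``combinatorial control in the middle step'' yourself as an obstacle — that missing control is exactly what the record-minima/short-strip trick supplies, and without it the argument does not close.
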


\begin{proof}
We may assume that $G$ is a triangle. If $G$ is not preperiodic and never
collapses, $\{G_n=\sigma_3^n(G)\}_{n=0}^{\infty}$ is an infinite sequence of
gaps. Let $d_n$ be the length of the shortest edge of $G_n$; then $d_n>0$ and
$d_{n}\to 0$. Let $\ell_n$ be the longest edge of $G_n$. Define a sequence
$n_i+1$ of all times when $d_{n_i+1}$ is less than all previous $d_{n}$'s.
For large $i$, the gap $G_{n_{i+1}+1}$ has an edge of length $d_{n_{i+1}+1}$, the
image of $\ell_{n_{i+1}}$.
Since $d_{n_{i+1}+1}<d_{n_i+1}$, the leaf $\ell_{n_{i+1}}$
is closer to criticality than $\ell_{n_i}$. Hence $\ell_{n_{i+1}}$ is
contained in a short strip of $\ell_{n_i}$. However then $G_{n_{i+1}}$ has an
edge shorter than $w(\sh(\ell_{n_i}))=\frac13\cdot d_{n_i+1}$, a
contradiction with the choice of $n_{i+1}+1$.
\end{proof}

\section{Finite gaps}

Let us study finite gaps of symmetric laminations.

\begin{definition}[Major]\label{d:major} Let $G$ be a periodic gap of a symmetric
lamination $\lam$. The edges of images of $G$ that are the closest to
criticality among all such edges are called \emph{majors (of the orbit of
$G$)} (there might be more than one major). If $M$ is such a major, then, by
Lemma \ref{l:closest}, we have $\|M\|\ge\frac14$.
\end{definition}


We use majors to study finite gaps of $\lam$.
By 
Theorem \ref{nwt-thm}, any finite gap $G$ eventually collapses or maps to a
periodic gap $\widetilde{G}$.
Periodic gaps $G$ can be classified into two
kinds.

(1) \textit{Gaps with symmetric orbits:} 
$\sigma_3^k(G)=-G$ for some $k>0$.

(2) \textit{Gaps without symmetric orbits:} 
$G$ and $-G$ are in distinct orbits.

Call a finite periodic gap of $\lam$ 
a \emph{periodic polygon}. Let $G$ be
a periodic polygon of period greater than $1$ and $\widetilde{G}$ be an
eventual image of $G$ containing a major $P$ of the orbit of $G$. Consider
the central symmetric gap/diameter $\cg(\lam)$ of $\lam$. In the diameter
case let $M=\cg(\lam)$; in the gap case consider majors $M, -M$ of
$\cg(\lam)$. In any case, consider short strips $\sh(M)$ bounded by the
leaves $M$, $-M$ and their siblings $M'$, $-M'$. By Lemma \ref{cs-gap}, we
have $\|M\|\ge \frac13$. There are two sibling gaps of $\cg(\lam)$; let $A$
be the one with edge $M'$, and let $B$ be the one with edge $-M'$.

In addition to $M'$, the gap $A$ has an edge which is a sibling of $-M$.
Using notation from Definition \ref{d:sibli}, we denote it by $-M''$. A
straightforward computation shows that $\|-M''\|\le \frac16$ (e.g., we can
insert an artificial diameter-diagonal $D$ in $\cg(\lam)$ and observe that
the appropriate sibling of $D$ is contained in $A$ and has the length
$\frac16$ which is, for geometric reasons, greater than or equal to
$\|-M''\|$ as desired). Similarly, the gap $B$ has an edge $M''$ which is a
sibling of $M$, too, and $\|M''\|\le \frac16$. Since $\|P\|\ge \frac14$ by
Lemma \ref{l:closest}, then $\widetilde{G}$ is inside a short strip from
$\sh(M)$ as $P$ fits nowhere else in the disk without crossing edges of
$\cg(\lam),$ $A$ and $B$. In particular, there exists exactly one other
long/medium edge $Q$ of $\widetilde{G}$ (in addition to $P$). Observe that
either $\cg(\lam)$ has two majors, or $\cg(\lam)$ is a diameter.

\begin{definition}\label{d:fx-rtn}
Let $G$ be a periodic gap of minimal period $k$. Then $G$ is said to be a
\emph{fixed return gap (of minimal period $k$)} if any two distinct forward
images of $G$ under the map $\sigma_3^i$ with $0\le i<k$ have disjoint
interiors and all vertices of $G$ are fixed by $\sigma_3^k$.
\end{definition}

We need the following 
 result 
 of Jan Kiwi \cite{kiw02}.

\begin{theorem}
\label{t:kiwi}
Let $\lam$ be a $\si_d$-invariant lamination. Then any infinite gap of $\lam$
is (pre)periodic. For any finite periodic gap $G$ of $\lam$ its vertices
belong to at most $d-1$ distinct cycles except when $G$ is a fixed return
$d$-gon. In particular, a cubic lamination cannot have a fixed return $n$-gon
for $n>3$. Moreover, if all images of a $k$-gon $G$ with $k>d$ have at least
$d+1$ vertices then $G$ is preperiodic.
\end{theorem}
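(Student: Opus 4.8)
\medskip

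\noindent\textbf{Proof strategy.} This theorem packages three assertions --- infinite gaps are (pre)periodic, the vertices of a finite periodic gap meet at most $d-1$ distinct $\si_d$-cycles unless the gap is a fixed return $d$-gon, and the two stated corollaries --- and my plan is to derive all of them from two tools adapted from the $\si_3$-material already developed: (A) a quantitative length estimate generalizing the Short Strip Lemma (Lemma~\ref{l:str}), controlling how large, and how numerous along a single forward orbit, wandering gaps can be; and (B) an orbit-portrait / Riemann--Hurwitz count for periodic polygons. I would begin by recording the standard criticality bound: the numbers $\deg(\si_d|_{\Bd H})-1$, taken over any family of gaps $H$ with pairwise disjoint interiors and supplemented by $1$ for each critical leaf of the family, sum to at most $d-1$. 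In particular the forward orbit of any gap meets only finitely many critical gaps, so after deleting an initial segment I may assume $\si_d$ restricts to an orientation-preserving homeomorphism $\Bd\,\si_d^n(G)\to\Bd\,\si_d^{n+1}(G)$ for all $n$.

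\medskip

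\noindent\textbf{Tool (A) and infinite gaps.} The statement I would prove is: a $\si_d$-invariant lamination has no wandering polygon with more than $d$ vertices, and no forward orbit contains infinitely many pairwise distinct wandering polygons. The mechanism is the one behind Theorem~\ref{nwt-thm} run in degree $d$ --- where, for $d\ge 3$, it yields not the nonexistence of wandering triangles but exactly these finiteness and size bounds. Given a wandering orbit of polygons, one chooses a coherent family of vertex orbits (possible because, by the previous paragraph, we may assume the orbit is noncritical, so $\si_d$ is injective on boundaries), and tracks the shortest edge $d_n$ of the $n$-th polygon; then $d_n\to 0$, so along a subsequence the polygons cluster at critical chords, the relevant short strips (cut off by a long chord and its $d-1$ siblings) are nested, any edge entering such a strip is strictly closer to criticality than the defining chord, and comparing record-small edges to strip widths produces a contradiction just as in Theorem~\ref{nwt-thm}. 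Granting this, an infinite gap $G$ has more than $d$ vertices, hence is not wandering; since the image of an infinite gap is again an infinite gap, $G$ never collapses, so $G$ is (pre)periodic.

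\medskip

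\noindent\textbf{Tool (B), finite periodic gaps, and the corollaries.} Let $G$ be a finite periodic $n$-gon of minimal period $p$. By gap invariance the first return map $F=\si_d^p$ permutes $\V(G)$ preserving cyclic order, hence acts as a combinatorial rotation, and the number of distinct $\si_d$-cycles met by $\V(G)$ equals the number of cycles of that rotation. Bounding this number by $d-1$ is where I would invoke an orbit-portrait count in the spirit of Goldberg--Milnor: the vertex sets of $G$ and of its iterates form a periodic family of landing-point configurations, and the amount by which such a family can ``separate'' the disk is limited by the degree; working this out gives at most $d-1$ cycles, with equality forcing $F$ to be the identity on $\V(G)$ and $n=d$, i.e.\ $G$ a fixed return $d$-gon (Definition~\ref{d:fx-rtn}). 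Taking $d=3$ yields that no cubic lamination has a fixed return $n$-gon with $n>3$. Finally, if a $k$-gon $G$ with $k>d$ has every image with at least $d+1$ vertices, then $G$ never collapses, and by Tool (A) it cannot be wandering (a wandering polygon has at most $d<k$ vertices); hence $G$ is preperiodic.

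\medskip

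\noindent\textbf{Main obstacle.} The two technical cores are (B) --- the sharp ``at most $d-1$ cycles'' count together with its exceptional fixed-return-$d$-gon case --- and the quantitative form of (A), which for $d\ge 3$ must be calibrated to bound wandering triangles rather than exclude them, and to forbid wandering polygons with $d+1$ or more vertices. Making the short-strip bookkeeping precise enough to yield both the size bound and the ``finitely many in one forward orbit'' bound is, I expect, the hardest step; an alternative is to follow the arguments of \cite{kiw02} directly.
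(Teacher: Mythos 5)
The paper does not supply a proof of Theorem~\ref{t:kiwi}; it is stated as ``the following result of Jan Kiwi'' and cited to \cite{kiw02}, where it serves as an external input to the later arguments about cubic symmetric laminations. There is therefore no in-paper proof to compare your sketch against.

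That said, your decomposition is consistent with how these statements are established in the literature: a degree-$d$ wandering-polygon bound (no wandering collection of non-precritical stand-alone polygons with more than $d$ vertices), plus an orbit-portrait style count showing that a periodic polygon's vertices lie in at most $d-1$ cycles with equality only for fixed-return $d$-gons, and the two stated corollaries then follow as you describe. Both cores are substantial --- Kiwi's paper is devoted precisely to the wandering part, and the periodic count is a full degree-$d$ orbit-portrait argument --- and your write-up defers both, as you yourself note; so this is a plan consistent with the right approach rather than a proof. Two points worth tightening if you pursue it. First, for ``infinite gaps are (pre)periodic'' you extract a wandering $(d+1)$-vertex \emph{stand-alone} polygon (in the sense of Definition~\ref{d:stand-alone-gaps}) inside a putatively wandering infinite gap; tool (A) must then be proved for stand-alone polygon collections, not only for gaps of $\lam$, and your initial noncriticality reduction is what guarantees that this sub-polygon never collapses. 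Second, in tool (B) the assertion ``equality forces $F$ to be the identity on $\V(G)$ and $n=d$'' needs the argument to also rule out rotations of order $>1$ together with $n=d$; that is exactly the delicate part of the Goldberg--Milnor style count and shouldn't be waved through.
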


Let us now go back to a symmetric lamination $\lam$. Recall that given a
long/medium leaf $\ell\in \lam$, its sibling collection is $\{\ell, \ell',
\ell''\}$.

\begin{Lemma}\label{l:23}
A triangle $T$ of $\lam$ does not share an edge with any $\si_3^n(T)\ne T$.
No two fixed return triangles of $\lam$ share an edge. A fixed return
triangle with long/medium side $M$ cannot map to a triangle with an edge
$M'$, $-M$ or $-M'$.
\end{Lemma}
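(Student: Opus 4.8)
The plan is to exploit the Short Strip Lemma (Lemma \ref{l:str}) together with the structure of majors to control how edges of triangles can be shared with their images. I would treat the three assertions in sequence, each reducing to a length/short-strip argument.

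\emph{First claim: a triangle $T$ shares no edge with $\si_3^n(T)\ne T$.} Suppose $\ell$ is a common edge of $T$ and $T_n=\si_3^n(T)$, with $T_n\ne T$. Consider the forward orbit of $\ell$ as a chord. By Remark \ref{length-fn}(4), some iterate of $\ell$ has length $\ge\frac14$; replacing $T$ by an image and $\ell$ by the corresponding edge, I may assume $\ell$ itself is the closest-to-criticality chord in its own forward orbit, hence long/medium by Lemma \ref{l:str}(4), and no forward image of $\ell$ enters the interior of $\sh(\ell)$. Now $T$ and $T_n$ are two triangles both having $\ell$ as an edge, lying on opposite sides of $\ell$ (if they were on the same side they'd coincide, being triangles with a common edge inside the same lamination — use (L1)). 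The third vertex of $T$ and the third vertex of $T_n$ then determine, via the dynamics over $n$ steps, a violation: I would track the shortest edges of $T$ and $T_n$ and use the fact that passing from $T$ to $T_n$ the third vertex must move, forcing an intermediate image into $\sh(\ell)$ closer to criticality, contradicting the choice of $\ell$. The cleanest route is to run the ``no wandering triangles'' style argument locally: the edge opposite to the shared edge in $T_n$ is an image of an edge of $T$, and comparing lengths via $\Gamma$ forces an iterate nearer to $\frac13$ than $\ell$.

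\emph{Second claim: no two fixed return triangles share an edge.} If $T_1,T_2$ are fixed return triangles (so all their vertices are fixed by $\si_3^{k_i}$) sharing an edge $\ell=\ol{ab}$, then $a,b$ are fixed by both $\si_3^{k_1}$ and $\si_3^{k_2}$, hence fixed by $\si_3^{\gcd}$; so $\ell$ is a periodic leaf whose endpoints are both periodic of the same period. The union $T_1\cup T_2$ is then a periodic configuration; connecting the two ``outer'' vertices of $T_1$ and $T_2$ across $\ell$ and iterating, I would derive that $\lam$ contains a fixed return $4$-gon or a configuration forbidden by Theorem \ref{t:kiwi} (a cubic lamination has no fixed return $n$-gon for $n>3$, and vertices of a finite periodic gap lie in at most $d-1=2$ cycles unless it is a fixed return $d$-gon). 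The key point is that the four outer vertices of $T_1\cup T_2$ together with the shared edge cannot all be accommodated: either they form a single periodic gap with too many vertices or too many cycles, or the two triangles are forced to coincide.

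\emph{Third claim: a fixed return triangle with long/medium side $M$ cannot map to a triangle with edge $M'$, $-M$, or $-M'$.} Here I would use that $M$, being the (long/medium) major-type side of a fixed return triangle, is the closest to criticality in its forward orbit, so by Lemma \ref{l:str}(4) no forward image of $M$ enters the interior of $\sh(M)=C(M)\cup -C(M)$. But $M'$ is a sibling of $M$, hence an edge of the short strip $C(M)$; $-M$ and $-M'$ are edges of $-C(M)$. A triangle having one of these as an edge, and being a forward image of the fixed return triangle, would have to be a forward image of $T$ whose corresponding edge is $M'$, $-M$, or $-M'$ — but all of these lie on the boundary of $\sh(M)$, and one checks that such a triangle must then penetrate the interior of $\sh(M)$ (its third vertex, plus convexity, forces this), contradicting Lemma \ref{l:str}(4). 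For $-M$ and $-M'$ I also invoke symmetry: the orbit of $T$ would meet the orbit of $-T$, collapsing the relevant infinite length-comparisons, combined again with the strip obstruction.

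The main obstacle I anticipate is the first claim: making precise the geometric statement that two distinct triangles sharing a long/medium edge, related by $\si_3^n$, force some intermediate image to enter $\sh(\ell)$ strictly closer to criticality. This requires carefully choosing the edge to iterate (the one whose length history controls the others) and invoking the monotone-plus-covering structure of $\si_3$ on gap boundaries (Definition \ref{d:gap-inv}) to guarantee that the shortest edge of an image gap is the image of the edge nearest criticality, exactly as in the proof of Theorem \ref{nwt-thm}. Once that reduction is in place, parts (2) and (3) are comparatively short, relying on Theorem \ref{t:kiwi} and the non-penetration property of short strips respectively.
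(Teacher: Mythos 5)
Your proposal follows a genuinely different route from the paper's for the first and third claims, but both of those routes have real gaps.

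\emph{First claim.} You reduce to the case where the shared edge $\ell$ is closest to criticality in its own orbit, which is a legitimate normalization, and then assert that ``an intermediate image'' must enter $\operatorname{int}(\sh(\ell))$ closer to criticality. You give no reason for that, and you explicitly flag it as the main obstacle; as written it is a hole, not a proof. The paper's argument proceeds differently: it first uses the fact that $T$ and $\si_3^n(T)$ share an edge to show $\si_3^n$ must swap the endpoints of the common edge (so the orbit of $T$ splits into pairs of triangles related by $\si_3^n$), normalizes so that the \emph{shared} edge $\ol{ab}$ is a major of its own orbit with $T\subset\sh(\ol{ab})$, and then derives the contradiction from the \emph{short} edge $m$ of $T$ and the short strips of $m$'s own major $M_m$ (i.e., Lemma \ref{l:str} is applied with $\ell=M_m$, not with $\ell=\ol{ab}$). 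You never notice the edge-flipping structure and you aim the Short Strip Lemma at the wrong chord.

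\emph{Third claim.} Two problems. First, you assume $M$ is closest to criticality in its forward orbit; nothing in the statement gives you that, and the paper makes no such assumption. Second, even granting that $\si_3^n(T)$ has edge $M'$ (or $-M$, $-M'$) and ``penetrates $\operatorname{int}(\sh(M))$'' — which is also not forced geometrically, since the image triangle could lie on either side of that edge — this does not contradict Lemma \ref{l:str}(4). That lemma controls only forward images of $M$, and the forward image in question is $M'$ itself, which sits on $\Bd\,\sh(M)$, not in the interior; the other edges of $\si_3^n(T)$ are not forward images of $M$. So the cited obstruction never fires. The paper instead handles the case $\si_3^n(M)=-M$ by analyzing how $\si_3^n$ permutes edges of $T$ versus their $\tau$-images and ruling out each pattern; handles $M'$ by observing it would produce two periodic triangles in one cycle sharing the edge $\si_3(M)=\si_3(M')$, contradicting the first claim; and reduces $-M$, $-M'$ to the previous cases via the symmetric triangle $-\si_3(T)$ and the second claim. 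Your ``invoke symmetry'' sentence gestures at this but does not carry it out.

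\emph{Second claim.} Your argument (combine the two fixed return triangles into a quadrilateral and contradict Theorem \ref{t:kiwi}) matches the paper's, modulo that you should state explicitly — as the paper does — that the first claim forces $T_1$ and $T_2$ into distinct cycles before you form the quadrilateral.

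In short: part (2) is fine, but parts (1) and (3) as proposed do not close; the missing ingredients are the edge-flipping/pairing structure of the orbit of $T$ in part (1), and in part (3) the realization that Lemma \ref{l:str} cannot be applied directly because $M'$, $-M$, $-M'$ all lie on $\Bd\,\sh(M)$, so the reduction must go through the first two claims instead.
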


\begin{proof}
By way of contradiction, let $\si_3^n(T)\ne T$ share an edge with $T$.
Properties of laminations imply that $T$ has vertices, say, $a,$ $b,$ $c$,
where $\si_3^n(a)=b,$ $\si_3^n(b)=a$ and $\si_3^n$ rotates $T$ accordingly.
Then the orbit of $T$ falls apart into pairs of triangles and each pair is
rotated by $\si_3^n$. We may assume that $\ol{ab}$ is a major of the orbit of
$\ol{ab}$ and $T$ is contained in $\sh(\ol{ab})$. If $m$ is a short side of
$T$ then $m$ is contained (except perhaps for the endpoints) in the interior
of the short strips generated by the major of the orbit of $m$, a
contradiction with Lemma \ref{l:str}.

To prove the second claim of the lemma assume, by the above, that two fixed
return triangles sharing an edge do not belong to the same cycle. Then we can
put them into one quadrilateral and observe that the existence of such a
quadrilateral contradicts Theorem \ref{t:kiwi}.

Let us prove the last claim of the lemma. Let $T$ be a fixed return triangle
such that $\si_3^n(T)=-T$; then $\si_3^n(-T)=T$. If $\si_3^n$ maps edges of
$T$ not to their $\tau$-images, then $\si_3^n$ applied to $-T$ will produce
the same rotation of edges of $-T=\si_3^n(T)$. Since the second iteration of
a non-trivial rotation of vertices of a triangle can never be the identity,
 $\si_3^{2n}(T)=T$ and $\si_3^{2n}$ on $T$ is not the identity, a
contradiction with $T$ being fixed return. So, $\si_3^n$ maps edges of $T$ to
their $\tau$-images. 

It follows that for any edge $e$ of any triangle from the orbit of $T$ we
have $\si_3^n(e)=-e$. Among all iterated images of $T$ choose a triangle that
has an edge $\ell$  closest to criticality among all edges of triangles in
the orbit of $T$; assume that this triangle is $T$ itself. Denote by $m$ its
short edge, and then choose $k$ such that $\si_3^k(m)=M$ is a major of the
orbit of $m$. It follows that $M$ enters its short strips as a short leaf, a
contradiction. Hence no fixed return triangle $T$ can map onto $-T$.

Now, let $T$ be a fixed return triangle with a long/medium edge $M$. It
cannot eventually map to a triangle with an edge $M'$ as otherwise images of
these two triangles are periodic triangles from the same orbit that share an
edge $\si_3(M)=\si_3(M')$, a contradiction.
If now $\si_3^n(M)=-M$ or $-M'$,
 then $\si_3(T)$ has edge $\si_3(M)$ that under $\si_3^n$ maps to the
 triangle $T'$ with edge $\si_3(-M)=\si_3(-M')=-\si_3(M)$. There is also a
triangle $-\si_3(T)$ with the edge $-\si_3(M)$.
By the above, $\si_3(T)$
cannot eventually map to $-\si_3(T)$. We conclude that the triangles
$T'=\si_3^{n+1}(T)$ and $-\si_3(T)$ share an edge $-\si_3(M)$ and are,
therefore, two fixed return triangles sharing an edge.
By the above, this is
impossible which proves the last claim of the lemma.
\end{proof}

\begin{Lemma}\label{P-or-P}
Let $G$ be a periodic polygon. Then $(1)$ the gap $G$ is not fixed return,
and $(2)$ each edge of $G$ eventually maps to $P$ or $-P$ where $P$ is a
major of the orbit of $G$.
\end{Lemma}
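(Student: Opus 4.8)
The plan is to prove part (2) by a ``funneling'' argument and then deduce part (1) from it. Throughout I use the structural facts established just before the statement: an image $\widetilde G=\si_3^j(G)$ of $G$ carrying a major $P$ of the orbit satisfies $\|P\|\ge\frac14$, lies inside one short strip $C(M)$ of $\sh(M)$ (where $M,-M$ are the majors of $\cg(\lam)$, or $M=\cg(\lam)$ if the central set is a diameter), and has exactly one long/medium edge $Q$ besides $P$, every other edge being short. Let $E$ be the finite set of all edges of all gaps in the orbit of $G$. Two preliminary remarks: $G$ has no critical edge, since a critical edge collapses under $\si_3$ and then every image of $G$ would have strictly fewer vertices, so the orbit could never return to $|\V(G)|$ vertices; in particular $P$ and $-P$ are long/medium and $\si_3$ maps $E$ into $E$. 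Also, no gap in the orbit can carry a leaf together with one of its siblings, as two vertices would then merge under $\si_3$; hence $\si_3\colon E\to E$ is injective, so $E$ is a disjoint union of $\si_3$-cycles and every edge of $G$ is $\si_3$-periodic.

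To prove (2) it suffices to show that every $\si_3$-cycle in $E$ meets $\{P,-P\}$. Fix a cycle and let $\ell$ be one of its edges closest to criticality. By Lemma~\ref{l:str}(4), $\ell$ is long/medium and no forward image of $\ell$ meets the interior of $\sh(\ell)$, and $\|\ell\|\ge\frac14$ by Lemma~\ref{l:closest}. Since $P$ realizes the minimal distance to $\frac13$ in $E$, the distance of $\|\ell\|$ to $\frac13$ is at least that of $\|P\|$. If it is strictly larger, property (d) of short strips gives $P\subset\sh(\ell)$, so $P$ is an edge lying inside one thin sliver $C(\pm\ell)$; as $\widetilde G$ is a gap of $\lam$ it cannot cross the leaf $\pm\ell$, so it is trapped between $P$ and $(\pm\ell)'$ and hence $\widetilde G\subset C(\pm\ell)\subset\sh(\ell)$. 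Picking $s$ with $\si_3^s(G_\ell)=\widetilde G$ ($G_\ell$ a gap carrying $\ell$), the edge $\si_3^s(\ell)$ of $\widetilde G$ lies in $\sh(\ell)$ while avoiding its interior, so $\si_3^s(\ell)\in\{\pm\ell,\pm\ell'\}$; being long/medium and an edge of $\widetilde G$ it equals $P$ or $Q$. The case $P$ finishes the cycle, and the case $Q$ forces $Q$ (up to $\tau$) into the cycle of $\ell$.

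What remains is to show that the cycle of $Q$ also meets $\{P,-P\}$, and to dispose of the exact ties (when $\|\ell\|$ is as far from $\frac13$ as $\|P\|$). Here one uses that any edge of $E$ realizing the minimal distance to $\frac13$ has a short strip of the same width as $\sh(P)$, so by the nesting property (e) its short strip coincides with $C(\pm P)$ and the edge lies in $\{P,P',-P,-P'\}$; and $P'$ (resp.\ $-P'$), if it is in $E$, lies in the $\si_3$-cycle of $P$ (resp.\ $-P$) because $\si_3(P')=\si_3(P)$. A decreasing induction on the distance to $\frac13$ over the finitely many cycles of $E$, with the cycles through $P$ and through $-P$ as base cases, then propagates ``the cycle meets $\{P,-P\}$'' from closer-to-criticality cycles to farther ones via the trap argument above. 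I expect this last step — ruling out an infinite regress among the ``second'' long/medium edges $Q$ and settling the ties — to be the main technical obstacle; everything else is the trap argument applied repeatedly, together with the two preliminary remarks.

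Finally, (1) follows from (2). Suppose $G$ is fixed return of minimal period $k$. By Theorem~\ref{t:kiwi} a cubic lamination has no fixed return $n$-gon with $n>3$, and a fixed return $2$-gon is a leaf, so $G$, and hence every gap in its orbit, is a fixed return triangle; in particular $\widetilde G$ is a fixed return triangle with long/medium major $P$. Since $\widetilde G\subset C(M)$, the edge $-P$ does not occur in $\widetilde G$, so $\widetilde G$ has an edge $e^\ast\notin\{P,-P\}$; by (2) there is $t$ with $\si_3^t(e^\ast)\in\{P,-P\}$. If $k\mid t$ then $\si_3^k$, fixing all vertices of $\widetilde G$, would give $\si_3^t(e^\ast)=e^\ast\notin\{P,-P\}$, a contradiction; so $k\nmid t$ and $\si_3^t(\widetilde G)\ne\widetilde G$ is another fixed return triangle in the orbit of $G$. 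If $\si_3^t(e^\ast)=P$, then $\widetilde G$ and $\si_3^t(\widetilde G)$ are two distinct fixed return triangles sharing the edge $P$, contradicting Lemma~\ref{l:23}. If $\si_3^t(e^\ast)=-P$, then $\widetilde G$, a fixed return triangle with long/medium side $P$, maps under $\si_3^t$ to the triangle $\si_3^t(\widetilde G)$, which has the edge $-P$, again contradicting Lemma~\ref{l:23}. Hence $G$ is not fixed return, which proves (1).
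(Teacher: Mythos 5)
Your proposal is not complete, and you correctly flag where: the step showing that the cycle of the ``second'' long/medium edge $Q$ of $\widetilde G$ meets $\{P,-P\}$ is a genuine gap, not a routine technicality. In fact, it is essentially equivalent to what part (1) asserts. If $\ell$ is the closest-to-criticality edge in its $\si_3$-cycle and strictly farther from $\frac13$ than $P$, your trap argument gives $\si_3^s(\ell)\in\{\pm\ell,\pm\ell'\}$, and injectivity of $\si_3$ on $E$ rules out $\pm\ell'$, so $\si_3^s(\ell)\in\{\ell,-\ell\}$ and in particular $\si_3^s(\ell)=Q$ cannot equal $P$. One is left with $Q\in\{\ell,-\ell\}$ and the return $\si_3^n$ of $\widetilde G$ sending $Q$ to itself or to $-Q$; the first case forces $\si_3^n$ to fix two adjacent vertices of $\widetilde G$ and hence all of them, which is exactly ``$\widetilde G$ is fixed return,'' and the second forces $\widetilde G$ and $-\widetilde G$ to share an edge. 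Neither can be dispatched from your (2) alone, and your deduction of (1) uses (2), so the argument is circular at precisely this point.

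The paper avoids this trap by proving (1) \emph{first}, by a direct argument not passing through (2): assume $T$ is a fixed return triangle with major edge $\ell$ and (unique) short edge $m$; let $M=\si_3^n(m)$ be the closest-to-criticality forward image of $m$, an edge of a triangle $H\neq T$ in the orbit; Lemma~\ref{l:str}(4) keeps $m$ out of the interior of $\sh(M)$, hence $T\not\subset\sh(M)$, and Lemma~\ref{l:23} forbids $T$ (an eventual image of $H$) from having $M,M',-M,-M'$ as an edge, so every edge of $T$ — including the major $\ell$ — is strictly farther from criticality than $M$, contradicting the choice of $\ell$. With (1) in hand, (2) is short: for an edge $\ell$ of $G$ with closest forward image $\hell\notin\{P,-P\}$, one gets $G\subset\sh(\hell)$ with a boundary edge $\tell\in\{\pm\hell,\pm\hell'\}$ of $\sh(\hell)$ being the unique long/medium edge of $G$ other than $P$; since $G$ is \emph{not} fixed return, $\si_3^n$ moves $\tell$ to another edge of $G$, which is in the interior of $\sh(\hell)$, contradicting Lemma~\ref{l:str}. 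So the dependency runs $(1)\Rightarrow(2)$, opposite to what you attempted, and the key you are missing is a proof of (1) that does not rely on (2). Your use of Lemma~\ref{l:23} in deriving (1) from (2) is correct as far as it goes, and your preliminary remarks (no critical edges in $E$, $\si_3$ bijective on $E$, hence $E$ is a union of cycles) are sound — but the ``funneling'' will not close without first knowing $G$ is not fixed return.
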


\begin{proof}
(1) By Theorem \ref{t:kiwi}, the only possible fixed return gap of a cubic
lamination is a triangle. Assume that a fixed return triangle $T$ of $\lam$
has an edge $\ell$, a major of the orbit of $T$.
Let $m$ be the only short edge of $T$.
Let $M=\si_3^n(m)$ be a major of the orbit of $m$ and an edge of a
triangle $H\ne T$ from the orbit of $T$.
By Lemma \ref{l:str}, the edge $m$ is disjoint
from the interior of $\sh(M)$. Since $m$ is an edge of $T$, then $T$ cannot
be contained in $\sh(M)$.
By Lemma \ref{l:23}, the triangle $T$, being an eventual image of
$H$, cannot have $M,$ $-M$, $M',$ or $-M'$ as an edge.
Then $\ell$ is closer to criticality than $M$, a contradiction.

(2) Let $P$ be an edge of $G$. For an edge $\ell$ of $G$, let $\hell$ be an
eventual image of $\ell$ which is closest to criticality; by Lemma
\ref{l:closest}, the leaf $\hell$ is long/medium. If $\hat{\ell}\notin
\{P,-P\}$, then $G\subset \sh(\hat{\ell})$ as $P$ is contained in the
interior of $\sh(\hell)$. By Lemma \ref{l:str}, the gap $G$ is not contained
in the interior of $\sh(\hell)$; hence a boundary edge $\tell$ of
$\sh(\hell)$ is an edge of $G$. However then, since $G$ is not fixed return,
$\tell$ will have an eventual image non-disjoint from the interior of
$\sh(\hell)$, a contradiction with Lemma \ref{l:str}.
\end{proof}

\begin{Lemma}\label{fn-gaps}
Let $G$ be a periodic polygon of a symmetric lamination, and let $g$ be
the first return map of $G$. One of the following is true.

$(a)$ The first return map $g$ acts on the sides of $G$ transitively
    as a rational rotation.

$(b)$ The edges of $G$ form two disjoint periodic orbits, $g$ permutes the
    sides of $G$ transitively in each orbit, and $G$ eventually maps to the
    gap $-G$. If $\ell$ and $\ell'$ are two adjacent edges of $G$, then the
    leaf $\ell$ eventually maps to the edge $-\ell'$ of $-G$.
\end{Lemma}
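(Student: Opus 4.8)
The plan is to analyse the first return map $g=\sigma_3^k|_{\Bd G}$ of $G$, where $k$ is the period of $G$, and to show it is a rational rotation of $\Bd G$; the dichotomy (a)/(b) then just records whether it acts with one or two orbits on the $n=|\V(G)|$ edges of $G$. Write $\mathcal{O}=\{G_i=\sigma_3^i(G):0\le i<k\}$ for the orbit. The first step is to see that $g$ is an orientation-preserving homeomorphism of the topological circle $\Bd G$. By Definition~\ref{d:gap-inv}, each $\sigma_3|_{\Bd G_i}\colon\Bd G_i\to\Bd G_{i+1}$ is a positively oriented composition of a monotone map with a covering map; if some $G_i$ were critical or had a critical edge, then $|\V(G_{i+1})|<|\V(G_i)|$, and the chain $|\V(G_0)|\ge|\V(G_1)|\ge\cdots\ge|\V(G_k)|=|\V(G_0)|$ would be strict somewhere, a contradiction. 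Hence every $\sigma_3|_{\Bd G_i}$ carries the vertices of $G_i$ bijectively and cyclically onto those of $G_{i+1}$ and collapses no edge, so it is a homeomorphism, and so is $g$. Therefore $g$ permutes the edges of $G$ as a rotation by some $r\in\{1,\dots,n-1\}$, and the edges split into $d:=\gcd(n,r)$ orbits of size $n/d$, each transitively permuted by $g$ and interleaved cyclically around $\Bd G$ (the same for vertices).

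Next I would bound $d$. Fix a major $P$ of $\mathcal{O}$ --- an edge of some $\widetilde G\in\mathcal{O}$ with $\|P\|\ge\frac14$, by Lemma~\ref{l:closest} --- and let $E$ be the finite set of all edges of all gaps of $\mathcal{O}$. I would first note that no leaf is an edge of two distinct gaps of $\mathcal{O}$: otherwise the same holds for all forward images, and pushing such a shared edge forward to a major contradicts the Short Strip Lemma~\ref{l:str} (this is Lemma~\ref{l:23} in the triangular case). Granting this, $\sigma_3$ restricts to a permutation of $E$, each of its cycles meets the edge set of $G$ in a single $g$-orbit, and so the $\sigma_3$-cycles in $E$ are in bijection with the $g$-orbits of edges of $G$; there are exactly $d$ of them. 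By Lemma~\ref{P-or-P}(2), applied to each $G_i$ (whose orbit is $\mathcal{O}$, with $P$ a major), every edge in $E$ eventually maps under $\sigma_3$ to $P$ or $-P$, so every $\sigma_3$-cycle in $E$ contains $P$ or $-P$; since each of $P,-P$ lies in a single $\sigma_3$-cycle, $E$ has at most two $\sigma_3$-cycles, i.e.\ $d\le 2$. If $d=1$, then $g$ is a single $n$-cycle on the edges, a transitive rational rotation, which is conclusion~(a).

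It remains to treat $d=2$. Then $E$ has exactly two $\sigma_3$-cycles $C_P\ni P$ and $C_{-P}\ni-P$, and since $\tau$ commutes with $\sigma_3$ and $\tau(P)=-P$ we have $\tau(C_P)=C_{-P}$; in particular $-P\in E$. The two $g$-orbits of edges of $G$ are $O^+=C_P\cap(\text{edges of }G)$ and $O^-=C_{-P}\cap(\text{edges of }G)$, both nonempty, interleaved around $\Bd G$, and each transitively permuted by $g$ --- the first two assertions of~(b). Now $-P$ is an edge of some gap of $\mathcal{O}$, while it borders exactly $-\widetilde G$ and one further gap; combining this with the symmetry of the picture and with the fact that $P$ itself borders no second gap of $\mathcal{O}$, one deduces --- this is the delicate point, see below --- that $-\widetilde G\in\mathcal{O}$, whence $\mathcal{O}=\tau(\mathcal{O})$ since $\mathcal{O}$ is a single $\sigma_3$-orbit; so $G$ eventually maps to $-G$. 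Finally, let $\ell,\ell'$ be adjacent edges of $G$; being adjacent they lie in opposite orbits, say $\ell\in O^+\subseteq C_P$ and $\ell'\in O^-$, so $-\ell'=\tau(\ell')\in\tau(C_{-P})=C_P$; thus $\ell$ and $-\ell'$ lie in the common $\sigma_3$-cycle $C_P$, and $\ell$ eventually maps to $-\ell'$, an edge of $-G$. This gives the last assertion of~(b).

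The step I expect to be hardest is the one flagged above: passing, when $d=2$, from ``$-P$ is an edge of a gap of $\mathcal{O}$'' to ``$-G\in\mathcal{O}$'' --- equivalently, excluding a pathological interleaving of $\mathcal{O}$ with its mirror orbit $-\mathcal{O}$ along shared edges. I would attack it using the geometry of short strips (Lemma~\ref{l:str}) together with Lemma~\ref{cs-gap}, which pins down $\cg(\lam)$ and the fact that $\lam$ has exactly two mirror-symmetric critical sets; the rotation-number bookkeeping, the reduction that no leaf is an edge of two gaps of one orbit, and the use of $\tau$ to interchange $C_P$ and $C_{-P}$ should then be routine, and the transitivity and adjacency statements are formal once $-G\in\mathcal{O}$ is in hand.
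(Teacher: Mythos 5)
Your overall strategy is essentially the paper's: use Lemma~\ref{P-or-P} (every edge of the orbit eventually maps to a major $P$ or $-P$) to bound the number of first-return orbits by two, then analyse the two-orbit case via the symmetry $\tau$. The preliminary material --- showing $g$ is an orientation-preserving homeomorphism of $\Bd G$ by ruling out critical edges along the orbit, and the $\gcd$ bookkeeping --- is sound; the paper works at a higher level by citing Theorem~\ref{t:kiwi} for the ``at most two cycles'' bound and treating $g$ as a rotation directly.

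The ``delicate point'' you flag --- deducing, once $d=2$, that $-G\in\mathcal O$ from ``$-P$ is an edge of a gap of $\mathcal O$'' --- is exactly where the paper argues by contrapositive: if $\mathcal O$ is \emph{not} symmetric, then $-P$ is not an edge of any $G_i\in\mathcal O$, so by Lemma~\ref{P-or-P} every edge eventually maps to $P$ alone, giving a single cycle and $d=1$. Your implication and the paper's one-liner are logically the same statement, and both hinge on the single underlying fact that \emph{two distinct periodic polygons of a symmetric lamination cannot share an edge}: with it, $-P$ being an edge of both $-\widetilde G$ and some $G_i\in\mathcal O$ forces $G_i=-\widetilde G$, hence $-\widetilde G\in\mathcal O$. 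You also invoke this same fact in step~3 (``no leaf is an edge of two distinct gaps of $\mathcal O$'') to identify $\sigma_3$-cycles in $E$ with $g$-orbits, but there you justify it only by gesturing at the triangle case (Lemma~\ref{l:23}). For general $n$-gons this is Corollary~\ref{dis-fn-gaps}; although it appears after Lemma~\ref{fn-gaps} in the paper, its proof uses only Lemma~\ref{P-or-P}, Corollary~3.7 of~\cite{bmov13}, and the barycentric extension, not Lemma~\ref{fn-gaps}, so you may invoke (or re-derive) it here without circularity. Supplying that one lemma closes both of the places where your argument currently leans on an unproved disjointness claim; once it is in place, your $\sigma_3$-cycle argument for the final adjacency assertion ($\ell$ and $-\ell'$ both lie in $C_P$) is correct and is somewhat more elementary than the paper's orientation-preservation argument that $\ell$ never maps to $-\ell$.
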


\begin{proof}
(a) By Theorem \ref{t:kiwi} (or because every edge of $G$ passes through $P$
or $-P$), the vertices of $G$ form one/two periodic orbits under the map $g$.
If the orbit of $G$ is not symmetric, then it does not include $-P$. Hence
there is a unique orbit of vertices of $G$ and (a) holds.

(b) If the vertices are in two orbits, then, by (a), the gap $G$ has a symmetric
orbit, and the majors $P$ and $-P$ of the orbit of $G$ have distinct orbits.
If $\si_3^k(\ell)=-\ell$ for some $k$, then $\si_3^k(P)=-P$ (because
$\si_3^k$ preserves orientation), a contradiction. Hence $\ell$ never maps to
$-\ell$ and the last claim of the lemma follows because the two orbits of
vertices alternate on the boundary of $G$.
\end{proof}

\begin{definition}
If case (a) from Lemma \ref{fn-gaps} holds, we call a gap $G$ a
\emph{1-rotational gap}. If case (b) from Lemma \ref{fn-gaps}
holds we call such a gap \emph{a 2-rotational gap}.

\end{definition}

Below are the two important properties of preperiodic polygons. 

\begin{corollary}\label{no-diagonal}
If $G$ is a preperiodic polygon of a symmetric lamination such that $G$ is
not precritical (e.g., if $G$ is periodic), then no diagonal of $G$ can be a
leaf of a symmetric lamination.
\end{corollary}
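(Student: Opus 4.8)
\textit{Proof plan.} The plan is to reduce to a periodic diagonal, and then to exploit the fact that the first-return map of a periodic polygon acts on its boundary as a rotation.

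Suppose for contradiction that a diagonal $\ell$ of $G$ is a leaf of some symmetric lamination $\lam'$. Since $G$ is not precritical, every $\si_3^i(G)$ is a polygon with the same number of vertices and no critical edge; by Lemma~\ref{l:adiag-1} each $\si_3^i(\ell)$ is then a diagonal of $\si_3^i(G)$, and, being an image of a leaf of $\lam'$, it is again a leaf of $\lam'$. The endpoints of $\ell$ are vertices of $G$, hence preperiodic, so for $i$ large $\si_3^i(\ell)$ is a \emph{periodic} leaf of $\lam'$ and a diagonal of the periodic polygon $\si_3^i(G)$. Replacing $G$ and $\ell$ by such images, and then by the image in the $\si_3$-orbit whose length is closest to $\tfrac13$, I may assume $G$ is periodic, $\ell$ is a periodic diagonal of $G$, and (Lemmas~\ref{l:closest},~\ref{l:str}) that $\tfrac14\le\|\ell\|\le\tfrac5{12}$ with no forward image of $\ell$ meeting $\operatorname{int}\sh(\ell)$.

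Let $g$ be the first-return map of $G$ and $n$ the number of vertices of $G$. By gap-invariance $g|_{\Bd G}$ is an orientation-preserving homeomorphism permuting the vertices, i.e.\ a cyclic rotation $v_i\mapsto v_{i+s}$. All $g^j(\ell)$ are leaves of $\lam'$ and hence pairwise unlinked; if $p$ denotes the span of $\ell$ (so $2\le p\le n/2$, since $\ell$ is not an edge), an elementary planarity computation shows that a rotation-orbit of a span-$p$ chord is unlinked only when $\gcd(s,n)\ge p$. Thus $\gcd(s,n)\ge2$, so $g$ is not transitive on the edges of $G$, and by Lemma~\ref{fn-gaps} the gap $G$ is $2$-rotational, $n=2h$, $\gcd(s,n)=2$ and $p=2$. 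Hence $\ell$ joins two consecutive vertices of the convex $h$-gon $H=\ch(O)$ spanned by one of the two $g$-orbits $O$ of vertices, the $g$-orbit of $\ell$ is exactly $\Bd H$, and since $\V(H)=O$ contains no other point of $\uc$, $H$ is a periodic gap of $\lam'$.

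To finish, exploit the symmetry: by Lemma~\ref{fn-gaps}(b) some $\si_3^m$ maps $G$ to $-G$. Applying $\si_3^m$ and $\tau$ to $\Bd H\subset\lam'$ (both preserve $\lam'$), one gets that either the boundary of the convex polygon on the \emph{other} $g$-orbit $O'$ of vertices also lies in $\lam'$ --- impossible, since $\ch(O)$ and $\ch(O')$ have interleaved vertices and cross --- or else $\si_3^m(H)=\tau(H)=-H$, so $H$ is a periodic polygon of $\lam'$ with symmetric orbit, whose major, a ``skip-one'' diagonal of $G$, has length $\ge\tfrac14$ by Lemma~\ref{l:closest}. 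In this last case, for $h\ge4$ one checks that $\si_3^{2m}$ fixes $\V(H)$ pointwise, making $H$ a fixed-return $h$-gon and contradicting Theorem~\ref{t:kiwi}; the one remaining possibility $h=3$ --- a $2$-rotational hexagon $G$ --- is ruled out by combining this length estimate with the short-strip structure forced by Lemma~\ref{cs-gap} and the Short Strip Lemma, much as in the proof of Lemma~\ref{l:23}. I expect this $h=3$ sub-case to be the main obstacle, since it intertwines the combinatorics of $G$ inside $\lam$ with the periodic-triangle structure of $H$ inside $\lam'$.
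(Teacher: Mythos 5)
You correctly reduce to a periodic $G$ with a periodic diagonal $\ell$, and your planarity count establishing $\gcd(s,n)\ge p$ (hence $p=2$, $G$ $2$-rotational, $\ell$ a skip-one diagonal lying on the convex hull $H=\ch(O)$ of one of the two vertex orbits) is correct and parallels what the paper derives. The problem is the final symmetry step, where you set up the dichotomy ``$\si_3^m(H)$ is the hull of the other orbit $O'$'' vs.\ ``$\si_3^m(H)=-H$''.

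The second branch is a genuine gap. First, it never occurs: by Lemma~\ref{fn-gaps}(b), the map taking $G$ to $-G$ sends an edge $e_i$ of $G$ to $-e_{i+r}$ with $r$ \emph{odd} (that is precisely what ``$\ell$ eventually maps to $-\ell'$ for $\ell'$ adjacent'' encodes, since $\gcd(s,n)=2$ makes all reachable shifts $r+ks$ have the same parity as $r$). An odd shift sends the even-indexed orbit $O$ to $\tau(O')$, not to $\tau(O)$, so $\si_3^m(H)=\tau(\ch(O'))$ always, and your first branch already finishes the proof. Second, your proposed handling of the (vacuous) branch is itself wrong: $\si_3^{2m}$ is the first return $g$ of $G$, which shifts the vertices of $G$ by $s$; on $V(H)=O$ this is a rotation by $s/2$ positions, and $\gcd(s/2,h)=1$, so $\si_3^{2m}$ acts \emph{transitively}, not as the identity, on $V(H)$ --- $H$ is never a fixed-return $h$-gon for $h\ge 2$. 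You also need $H$ to be a gap of $\lam'$ to speak of its majors and apply Theorem~\ref{t:kiwi}, which is not established (having $\Bd H\subset\lam'$ does not preclude leaves of $\lam'$ inside $H$). And you leave $h=3$ explicitly unresolved. The paper's proof bypasses all of this by invoking Lemma~\ref{fn-gaps}(b) directly: with $a,b,c,d$ consecutive vertices, $\ell=\ol{ac}$ has an eventual image $\ol{\tau(b)\tau(d)}$ (the odd shift by $1$), which crosses $-\ell=\ol{\tau(a)\tau(c)}\in\lam'$. You should replace the case split by this observation; it closes every branch at once.
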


\begin{proof}
Let $\ell$ be a diagonal of $G$. If $G$ is 1-rotational, an eventual image of
$\ell$ crosses $\ell$, and $\ell$ cannot be a leaf of any lamination. Let $G$
be 2-rotational. Then the only way $\ell$ can possibly be a leaf of a
lamination is if there are clockwise consecutive vertices $a, b, c, d$ of $G$
and $\ell=\ol{ac}$. If $\ell\in \lam'$ where the lamination $\lam'$ is
symmetric, then $\ol{\tau(a)\tau(c)}\in \lam'$. Yet, by Lemma \ref{fn-gaps}
an eventual image of $\ell$ is $\ol{\tau(b)\tau(d)}$ which crosses
$\ol{\tau(a) \tau(c)}$, a contradiction.
\end{proof}

\begin{corollary}\label{dis-fn-gaps}
Two distinct preperiodic polygons have disjoint sets of vertices, unless both
are strictly preperiodic, share a common edge that eventually maps to a
critical leaf, and eventually both map to the same periodic polygon.
\end{corollary}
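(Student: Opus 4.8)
\textbf{Proof proposal for Corollary \ref{dis-fn-gaps}.}

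The plan is to argue by contradiction: suppose two distinct preperiodic polygons $G$ and $H$ share a vertex (or more), and derive the stated structural conclusions. First I would dispose of the case where at least one of them, say $G$, is periodic (or, more generally, eventually maps onto a periodic polygon $G^*$ that is not precritical). By Corollary \ref{no-diagonal} no diagonal of $G^*$ can be a leaf of $\lam$, and by Lemma \ref{P-or-P} every edge of $G^*$ eventually maps to a major $P$ or $-P$ of its orbit. If $G$ and $H$ share a vertex $v$, then, pushing forward by $\si_3$ to the periodic level and using that edges emanating from $v$ in both polygons must be leaves of $\lam$ that do not cross, I would show the only way to avoid a diagonal of $G^*$ being forced into $\lam$ is that the shared structure lies along a common edge. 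Then, following the iterated images of that shared edge and invoking Lemma \ref{P-or-P} together with the No Wandering Triangles theorem (Theorem \ref{nwt-thm}), the common edge must eventually become a major; but two distinct polygons sharing an edge that stays non-critical forever would, after enough iterations, produce two periodic polygons sharing an edge — which contradicts Lemma \ref{l:23} (for triangles) or Theorem \ref{t:kiwi} (for larger polygons, by gluing them into a single finite periodic gap with too many vertices or a forbidden fixed-return gon). So the shared edge must eventually map to a critical leaf, forcing both $G$ and $H$ to be precritical, hence strictly preperiodic.

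Next I would handle the case where both $G$ and $H$ are strictly preperiodic. Replacing $G$, $H$ by suitable forward images, one may assume their common vertex $v$ has an image that is a vertex of a periodic polygon $G^*$. The key point is that $\si_3$ restricted to the union of edges emanating from $v$ is locally injective on $\uc$ but can identify endpoints only through a critical leaf; so if the two polygons are genuinely distinct near $v$, there must be a critical leaf separating their ``extra'' edges. Tracking forward, I would show that the common edge $e=\ol{vw}$ of $G$ and $H$ (if they share two vertices) must land on a critical leaf: otherwise, by Lemma \ref{l:adiag-1} and the analysis of diagonals, or by directly iterating and applying Lemma \ref{l:str} (Short Strip Lemma) to control lengths, $e$ eventually becomes a long/medium leaf that repeatedly avoids its own short strips, and the two polygons would have to merge into a common periodic polygon sharing more than that edge — again contradicting Lemma \ref{l:23}/Theorem \ref{t:kiwi}. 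Once we know $e$ maps to a critical leaf, the gap $\si_3^j(G) = \si_3^j(H)$ (the image past the collapse) is a single polygon, and since neither $G$ nor $H$ eventually collapses to a point (Lemma \ref{cs-gap} forbids that), both eventually map to the same periodic polygon by Theorem \ref{nwt-thm}.

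Finally I would assemble these pieces into the precise trichotomy in the statement: either $G$ and $H$ have disjoint vertex sets, or both are strictly preperiodic, share exactly one edge that eventually maps to a critical leaf, and eventually map to the same periodic polygon. I expect the main obstacle to be the careful bookkeeping in the ``shared edge, never critical'' sub-case: one must rule out that the common edge stays a diagonal-free leaf forever while the two polygons remain distinct, and the cleanest route is probably to combine the Short Strip Lemma (to force the common edge's orbit toward criticality) with Lemma \ref{l:23} (to forbid two distinct periodic polygons from sharing an edge), but matching up the preperiods and making sure ``distinct'' survives all the reductions requires attention. A secondary subtlety is excluding the possibility that $G$ and $H$ share a vertex but \emph{not} an edge while being distinct — here the non-crossing condition (L1) for $\lam$ together with Corollary \ref{no-diagonal} applied at the periodic level should close the gap, since two polygons meeting at a single vertex with interleaved edges would force a crossing or a forbidden diagonal.
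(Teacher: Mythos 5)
Your approach diverges substantially from the paper's, and there is a genuine gap in the hardest sub-case. The paper's proof hinges on an entirely different mechanism: it forms the connected component $X$ of the union of the $\si_3$-orbits of $G$ and $G'$, shows that no leaf or gap inside $X$ is fixed by the first-return iterate $\bar\si_3^k$ of $X$ (invoking Lemma~\ref{P-or-P}), and then uses local connectedness of $X$ to produce a fixed point of $\bar\si_3^k$ in $X$. That fixed point must be a vertex of some gap in $X$, and then orientation-preservation of $\si_3^k$ on the leaves emanating from it (Corollary~3.7 of \cite{bmov13}) forces $\si_3^k$ to be the identity on those endpoints, contradicting Lemma~\ref{P-or-P} again. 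The final claim about the shared critical leaf is then cited directly from Lemma~3.11 of \cite{bmov13}, not re-derived.

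The concrete gap in your proposal is your last sentence, where you dismiss the ``shared vertex, not shared edge'' sub-case by asserting that two polygons meeting at a single vertex with interleaved edges must force a crossing or a forbidden diagonal. This is false: two finite gaps of a lamination can meet at a single vertex $x$ without any crossing and without either being a diagonal of the other --- simply have several (non-crossing) leaves emanate from $x$ and take two non-adjacent gaps in the resulting cyclic arrangement. Nothing in (L1) or Corollary~\ref{no-diagonal} excludes this, and it is precisely this configuration that the paper's fixed-point argument on $X$ is designed to rule out. Your shared-edge sub-case is also shaky: Lemma~\ref{l:23} concerns \emph{fixed return} triangles and Theorem~\ref{t:kiwi} concerns fixed return $d$-gons or cycle counts, and two generic periodic gaps of $\lam$ with a common edge are neither of these; ``gluing them into a single finite periodic gap'' is not licensed, since their union is not a gap of $\lam$. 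Without the fixed-point-in-$X$ machinery (or an equivalent replacement), the argument does not close.
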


\begin{proof}
If \emph{periodic} polygons $G$ and $G'$ share an edge $\ell$ or a vertex
$v$, then the union of the orbits of $G$ and $G'$ is a union of connected
components permuted by $\si_3$.
Let $X$ be the component of the union containing $G\cup G'$.
Let $\si_3^k$ be the minimal iterate of $\si_3$ that maps
$X$ back to itself. We claim that $\si_3^k(\hell)\ne \hell$ for any leaf
$\hell\subset X$. Indeed, assume that $\hell\subset X$ is an edge of a gap
$H$ such that $\si_3^k(\hell)=\hell$. Then $\si_3^k$ either fixes the vertices of
$H$ or flips $H$ to the other side of $\hell$ so that the first return map
$\si_3^{2k}$ of $H$ fixes the vertices of $H$. Since both possibilities
contradict Lemma \ref{P-or-P}, we see that $\si_3^k(\hell)\ne \hell$ for any
leaf $\hell\subset X$.

Recall that $\bar{\si}_3$ denotes the barycentric extension of $\si_3$ onto
the closed unit disk $\cdisk$. If a gap $H\subset X$ maps to itself by
$\bar{\si}_3^k$, then, by Lemma \ref{P-or-P}, $\bar{\si}_3^k$ rotates the
edges of $G$ and closures of components of $X\sm G$ attached to the egdes of
$G$ (``decorations''). For some $i>1$, the map $\bar{\si}_3^{ik}$ fixes the
edges of $H$ for the first time. It follows that $\bar{\si}_3^{ik}$ maps gaps
contained in decorations to themselves for the first time and fixes their
vertices, again a contradiction with Lemma \ref{P-or-P}. Hence no gap
$H\subset X$ maps to itself by $\bar{\si}_3^k$.

Since $X$ is locally connected, there exists $x\in X$ with
$\bar{\si}_3^k(x)=x$.
Since, by the previous paragraph, no leaf/gap contained
in $X$ maps to itself by $\bar{\si}_3^k$, then $x$ is a vertex of a gap
$H\subset X$. Let $Y$ be the union of leaves in $X$ with endpoint $x$. By
Corollary 3.7 \cite{bmov13}, the orientation is preserved on $Y\cap \uc$
under $\si^k_3$, and since $\si_3^k(x)=x$ then $\si_3^k|_{Y\cap \uc}$ is the
identity, again a contradiction with Lemma \ref{P-or-P}.

Thus, any preperiodic polygons $G$ and $G'$ sharing a vertex eventually map
to the same polygon. Preperiodic polygons sharing a vertex whose image
polygon is the same must share a critical leaf on their boundaries, see Lemma
3.11 in \cite{bmov13}. This completes the proof.
\end{proof}


Lemma \ref {col-quad} deals with gaps which eventually map onto
\emph{collapsing quadrilaterals}, i.e., quadrilaterals collapsed to a leaf by
$\sigma_3$.

\begin{Lemma}\label{col-quad} Let $\{G,-G\}$ be a pair of collapsing quadrilaterals
of $\lam$ and $s$ be the length of their shorter sides. Then any gap $H$ with
$\si_3^n(H)=\pm G$ is a quadrilateral with a pair of opposite edges of length
$s/3^n$ that map to short edges of $\pm G$.
\end{Lemma}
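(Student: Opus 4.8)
The plan is to prove Lemma~\ref{col-quad} by induction on $n$, peeling off one $\sigma_3$-preimage at a time and tracking how the edge lengths behave. The base case $n=0$ is the hypothesis: $G$ and $-G$ are collapsing quadrilaterals, and by definition $\sigma_3$ collapses each of them to a leaf, so each has two pairs of opposite edges, one pair of ``short'' sides of length $s$ that collapse (these are the edges mapped to points, i.e.\ critical edges of $\sigma_3$ restricted to the quadrilateral, having length... actually length $s$ with $\sigma_3$ identifying their endpoints so $s$ need not be $\tfrac13$ — rather, the two short sides are siblings of each other), and a pair of ``long'' sides that map to the single image leaf. So first I would carefully set up the geometry of a collapsing quadrilateral: if $G$ has clockwise vertices $a,b,c,d$, then $\sigma_3(G)$ being a leaf forces (say) $\sigma_3(a)=\sigma_3(d)$ and $\sigma_3(b)=\sigma_3(c)$, hence $\overline{ad}$ and $\overline{bc}$ are the two short edges, each of length $s$, and they form a sibling pair (their endpoints differ by $\tfrac13$), while $\overline{ab}$ and $\overline{cd}$ are the long edges mapping onto the image leaf $\sigma_3(\overline{ab})$.

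For the inductive step, suppose the claim holds for $n$, and let $H$ satisfy $\sigma_3^{n+1}(H)=\pm G$. Set $H_1=\sigma_3(H)$, so $\sigma_3^n(H_1)=\pm G$; by the inductive hypothesis $H_1$ is a quadrilateral with a pair of opposite edges $e_1, e_1'$ of length $s/3^n$ mapping to short edges of $\pm G$, and the other pair of opposite edges, say $f_1, f_1'$. Now I use gap-invariance (every $\sigma_3$-invariant lamination is gap invariant, stated in the excerpt) together with the structure of preimages: $H$ maps onto $H_1$, and I must show $H$ is again a quadrilateral with the analogous structure. The key point is that the two short edges $e_1, e_1'$ of $H_1$ have length $s/3^n < \tfrac16$ for $n$ large — more importantly they are short leaves (length $<\tfrac16$ once $n\ge1$, and one checks $s\le\tfrac16$ to begin with since the short sides of a collapsing quadrilateral, being siblings with the same image, cannot be long), so by Remark~\ref{length-fn}(2) their preimage edges in $H$ are short and get exactly $\tfrac13$ shorter: length $s/3^{n+1}$. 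The map $\sigma_3|_{\partial H}$ being of degree one onto $\partial H_1$ (this needs justification — see below) then forces $H$ to have exactly four vertices, mapping bijectively and order-preservingly to the four vertices of $H_1$, with the pair over $\{e_1,e_1'\}$ being the required pair of opposite short edges of length $s/3^{n+1}$.

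The main obstacle I anticipate is ruling out that $H$ could have \emph{more} than four vertices, i.e.\ that $\sigma_3|_{\partial H}$ might fail to be one-to-one or that $H$ might be critical. A collapsing quadrilateral is critical (its image is a leaf), but its preimages one step back need not be: $H_1 = \sigma_3(H)$ with $\sigma_3^n(H_1)=\pm G$; if $n\ge 1$ then $H_1$ is a genuine quadrilateral gap, not a leaf, and I need that $\sigma_3|_H\colon H\to H_1$ has degree one. This is where I would invoke Lemma~\ref{cs-gap}(2): a symmetric lamination has exactly two critical sets, and once we have passed to a preimage far enough that $H$ is disjoint from both critical sets (or more precisely, once the edges of $H$ avoid the short strips of the majors of $\cg(\lam)$), property (c) of short strips — ``any chord or gap in the complement of $\sh(M)$ maps $1$-to-$1$ onto its image'' — gives degree one. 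For the finitely many steps where $H$ might still meet a critical set, I would argue directly: a critical quadrilateral that maps onto a quadrilateral must itself be a collapsing quadrilateral, which by the two-critical-sets constraint and Lemma~\ref{l:23}/Lemma~\ref{P-or-P} type reasoning can only be $\pm G$ itself, closing the loop. Once degree one is in hand, the order-preserving bijection on vertices is automatic and the length bookkeeping via Remark~\ref{length-fn}(2) finishes the induction.
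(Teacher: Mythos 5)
Your inductive framework is sound, and your recognition that the degree-one issue needs care is on point, but there is a genuine gap in the length bookkeeping that Remark~\ref{length-fn}(2) does not close. A chord of length $L<\frac16$ has nine $\sigma_3$-preimage chords, taking \emph{three} distinct lengths: $L/3$ (short), $\frac13-L/3$ (medium, near-critical), and $\frac13+L/3$ (long, near-critical). Remark~\ref{length-fn}(2) tells you how $\|\sigma_3(\ell)\|$ compares with $\|\ell\|$; it gives you no way to decide which of these three preimage lengths the edge of $H$ over $e_1$ actually has. So ``their preimage edges in $H$ are short'' is not forced by pure length considerations. The missing ingredient is the lamination structure: because $\pm M$, $\pm M'$ are the \emph{majors} of $\lam$, i.e.\ its leaves closest to criticality, no long/medium leaf of $\lam$ can be closer to criticality than they are, and hence (Lemma~\ref{short-leaves}) any long/medium leaf of $\lam$ has every forward image at least as long as the minor $m$, whose length is $3s$. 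Since the edges in question eventually land on a side of $\pm G$ of length $s<3s$, none of their preimages under $\sigma_3^i$ ($0\le i\le n$) can be long/medium; they are all short, and only then does ``triple the length at each step back'' give $s/3^n$. This is exactly what the paper's one-line proof invokes via Lemma~\ref{l:str}, bypassing the induction entirely.

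Two smaller issues. In your base case, with $a,b,c,d$ in cyclic order and $\sigma_3(G)$ a non-degenerate leaf, the identification must be of \emph{opposite} vertices, $\sigma_3(a)=\sigma_3(c)$ and $\sigma_3(b)=\sigma_3(d)$ (so that $\sigma_3|_{\partial G}$ is a degree-two covering with positive orientation); the adjacent identification $\sigma_3(a)=\sigma_3(d)$, $\sigma_3(b)=\sigma_3(c)$ you wrote would make $\ol{ad}$ and $\ol{bc}$ critical chords of length $\frac13$, which is incompatible with them being the \emph{short} sides. And for degree one you do not need $H$ to be ``far'' from the critical sets: since the only critical gaps of $\lam$ are $\pm G$ (Lemma~\ref{cs-gap}(2)) and $\sigma_3(\pm G)$ is a leaf rather than a quadrilateral, $\sigma_3^i(H)\ne\pm G$ for all $0\le i<n$, and since $\lam$ has no critical leaves either, $\sigma_3$ is injective on the vertices of each intermediate gap; that immediately gives $|\V(H)|=4$ without any ``closing the loop.''
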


\begin{proof}
If $\ell$ is an edge of $H$ with $\si_3^n(\ell)$ being an edge of $G$ of
length $s$, then by Lemma \ref{l:str} all iterated images of $\ell$ are short
which implies the claimed.
\end{proof}

\begin{Lemma}\label{l:infinite-periodic}
An infinite critical gap of a symmetric lamination is periodic.
\end{Lemma}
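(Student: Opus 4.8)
The plan is to combine the rigidity coming from Lemma~\ref{cs-gap} (a symmetric lamination has exactly two critical sets, and symmetry forces them to be $\tau$-images of each other) with Kiwi's theorem (Theorem~\ref{t:kiwi}) and the structure theorem for infinite periodic gaps (Theorem~\ref{t:gaps-1}). The overall strategy is: rule out that the given infinite critical gap is \emph{strictly} preperiodic, by showing that in that case its eventual periodic image $U$ would be an infinite periodic gap whose whole orbit is free of critical sets, which Theorem~\ref{t:gaps-1} forbids.

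First I would pin down the critical sets. By Lemma~\ref{cs-gap}(2) there are exactly two critical sets $C_1,C_2$ of $\lam$. Since $\tau$ commutes with $\si_3$ and sends leaves to leaves and gaps to gaps while preserving criticality, $\tau$ permutes $\{C_1,C_2\}$. It cannot fix either one: a $\tau$-invariant critical set would contain the center $o$ (in its interior if it is a gap), and two distinct critical sets cannot both contain $o$ (two gaps have disjoint interiors; a leaf through $o$ cannot be a diagonal of a gap having $o$ in its interior; two diameters cross at $o$). Hence $C_2=-C_1$, and the two critical sets have the same type. Since the given $G$ is an infinite critical gap, it is one of the $C_i$, so both $C_1,C_2$ are gaps and $\{C_1,C_2\}=\{G,-G\}$. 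In particular $\lam$ has \emph{no critical leaves}, and $G$ and $-G$ are its only critical gaps.

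Next, by Theorem~\ref{t:kiwi}, $G$ is (pre)periodic; suppose toward a contradiction that it is strictly preperiodic. Every $\si_3$-image of $G$ is again an infinite gap (the $\si_3$-image of a set with infinitely many points is infinite, and by gap invariance it is a gap). Let $U=\si_3^m(G)$ be the first periodic gap in the forward orbit of $G$, of period $n$; here $m\ge 1$. I claim no gap in the orbit of $U$ is critical: if $\si_3^j(U)$ were critical it would equal $G$ or $-G$ and be periodic; if $G$ is periodic this contradicts our assumption, and if $-G$ is periodic then applying $\tau$ (using $\tau\si_3=\si_3\tau$) shows $G$ is periodic too --- a contradiction either way. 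Now apply Theorem~\ref{t:gaps-1} to $U$: if $\si_3^n|_{\partial U}$ had degree $>1$, then some intermediate map $\si_3|_{\partial\si_3^j(U)}$ would be at least $2$-to-$1$, forcing $\si_3^j(U)$ to be a critical gap, contradicting the previous claim; hence $\si_3^n|_{\partial U}$ has degree one, and alternative (1) or (2) of Theorem~\ref{t:gaps-1} holds. But alternative (1) yields a critical edge among the edges of gaps in the orbit of $U$, and alternative (2) yields critical leaves among edges of images of $U$; in both cases $\lam$ has a critical leaf, contradicting the first step. Therefore $G$ is periodic.

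The main obstacle I anticipate is the bookkeeping around the two critical sets: carefully checking that the symmetry argument really forces $\{G,-G\}$ to exhaust all the criticality of $\lam$ (so that there are no critical leaves and no image of $U$ is critical), and correctly converting the relation ``some iterate of $G$ equals $-G$'' into ``$G$ is periodic'' via commutativity of $\si_3$ and $\tau$. Once those points are secured, Theorems~\ref{t:kiwi} and~\ref{t:gaps-1} furnish the contradiction essentially mechanically.
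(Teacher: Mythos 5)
Your proof is correct and reaches the same conclusion as the paper, but the endgame is genuinely different. Both arguments set up the same skeleton: use Lemma~\ref{cs-gap}(2) and symmetry to conclude that $G$ and $-G$ exhaust the two critical sets of $\lam$ (so $\lam$ has no critical leaves), invoke Theorem~\ref{t:kiwi} to get (pre)periodicity, reduce to a periodic infinite gap $U$ with $\si_3^n|_{\partial U}$ of degree one (no critical gap lies in the orbit of $U$), and then derive a contradiction. Where they diverge is the last step: the paper leans on an \emph{external} fact (Lemma~2.28 of \cite{bopt20}, that edges of periodic gaps eventually map to critical or periodic edges) plus a dynamical argument --- since there are no critical leaves, some edges of $U$ become $\si_3^{kn}$-invariant, and the infinitude of $U$ then forces attracting periodic points for $\si_3^{kn}|_{\partial U}$, clashing with expansion. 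You instead read the contradiction straight off the two-case classification in Theorem~\ref{t:gaps-1}: whether $U$ is caterpillar or Siegel, the theorem guarantees a critical edge among the edges of iterates of $U$, i.e.\ a critical leaf of $\lam$. Your route is arguably tighter since it uses only results already reproduced inside the paper, whereas the paper's proof imports an additional lemma from \cite{bopt20} not otherwise stated; it also makes the ``no critical leaves'' premise do all the work in one place. Conversely, the paper's argument is somewhat more self-contained relative to Theorem~\ref{t:gaps-1}'s internal structure, resting instead on the well-known expansion property of $\si_3$. Your preliminary discussion of why $\tau$ must swap $C_1$ and $C_2$ is also more explicit than the paper's one-line remark that $G$ and $-G$ contain critical chords in their interiors; both justifications are sound.
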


\begin{proof}
Let $G$ be an infinite critical gap of a symmetric lamination $\lam$.
Since $G$ and $-G$ contain critical chords in their interiors (except for
their endpoints), $\lam$ has no critical leaves. Assume that eventual images
of $G$ are not equal $G$ or $-G$.
By Theorem \ref{t:kiwi}, the lamination $\lam$ has an
eventual image $H$ of $G$ which is infinite with $\si_3^n(H)=H$ and
$\si_3^n|_{\partial H}$ one-to-one for some $n>0$. Since the edges of periodic
gaps eventually map to critical or periodic edges (see, e.g., Lemma 2.28 of
\cite{bopt20}) and there are no critical leaves, we can find $k$ so that some
edges of $H$ are $\si_3^{kn}$-invariant.
It follows from the fact that $H$ is infinite,
 that $\si_3^{kn}|_{\partial H}$ has attracting points, a
contradiction with the expanding properties of $\si_3$. Hence, $G$ is either periodic or eventually maps to $-G$, in which case it is also periodic.
By Lemma \ref{cs-gap}, the two critical sets of $\lam$ are $G$ and $-G\ne G$.
\end{proof}

\section{Comajors and their properties}

In this section, we  work towards understanding the structure of the family
of symmetric laminations. Every symmetric lamination has three important
kinds of special leaves: majors, comajors, and minors.
Those leaves carry
 enough information to reconstruct the lamination. Formal definitions
are given below.

\subsection{Initial facts}  \emph{From now on $\lam$ denotes a symmetric
lamination.}

If $c$ is a short chord, then there are two long/medium chords with the same
image as $c$. We will denote them by $M_c$ and $M'_c$. Also, denote by $Q_c$
the convex hull of $M_c\cup M'_c$. This applies in the degenerate case, too:
if $c\in \uc$ is just a point, then $M_c=M'_c=Q_c$ is a critical leaf $\ell$
disjoint from $c$ such that $\si_3(c)=\si_3(M_c)$.

\begin{definition}
[Major] A leaf $M$ of $\lam$ 
closest to criticality
is called a \emph{major} of $\lam$.
\end{definition}

If $M$ is a major of $\lam$, then the long/medium sibling $M'$ of $M$ is also
a major of $\lam$, as well as the leaves $-M$ and $-M'$. Thus, a lamination has
either exactly 4 non-critical majors or 2 critical majors.

\begin{definition}[Comajor] The short siblings of  the major leaves of $\lam$
are called  \emph{comajors}; we also say that they form a \emph{comajor
pair}. If the major leaves of $\lam$ have a sibling of length $1/6$, then
this sibling is also called a \emph{comajor}. A pair of symmetric chords is
called a \emph{symmetric pair}. If the chords are degenerate, then their
symmetric pair is called \emph{degenerate}, too. 
\end{definition}

A symmetric lamination has a symmetric pair of comajors $\{c,-c\}$.

\begin{definition}
[Minor] Images of majors (or, equivalently, comajors) are called
\emph{minors} of a symmetric lamination. Similarly to comajors, every
symmetric lamination has two symmetric minors $\{m,-m\}$ .
\end{definition}

Critical majors of a lamination have degenerate siblings,
 hence we have degenerate comajors and minors in this case.
If majors $M$ and $-M$ are non-critical, then there is a critical gap, say, $G$
with edges $M$ and $M'$, and a critical gap $-G$ with edges $-M$ and $-M'$.

\begin{Lemma}\label{short-leaves}
Let $\{m,-m\}$ be the minors of $\lam$, and $\ell$ be a leaf of $\lam$. Then no
forward image of $\ell$ is shorter than $\min(\|\ell\|, \|m\|)$.
\end{Lemma}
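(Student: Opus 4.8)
The plan is to argue by contradiction, tracking the leaf of minimal length that can arise in forward orbits and playing it off against the comajor/minor structure. Suppose some leaf $\ell$ of $\lam$ has a forward image strictly shorter than $\delta=\min(\|\ell\|,\|m\|)$. Since the set of lengths $\{\|\si_3^n(\ell)\|\}$ is countable but its infimum is attained along a subsequence of leaves that (after passing to a subsequence) converge to some chord, we may instead work directly: among all leaves of $\lam$ that appear as iterated images of some leaf of $\lam$ and have length below $\delta$, there is one — call it $s$ — that is the closest to criticality among its own forward orbit (if no such leaf had this property one could keep shortening, contradicting that lengths cannot go to $0$ while all stay below $\frac16$; more carefully, take $s$ so that $\|s\|$ is as close to $\frac13$ as possible subject to $\|s\|<\delta$, which is possible since lengths below $\delta<\frac16$ form a closed condition on the compact lamination). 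By Lemma \ref{l:str}(4), such an $s$ is long/medium, i.e.\ $\|s\|>\frac16$; but $\|s\|<\delta\le\|m\|\le\frac12$ gives no contradiction yet, so the real content is to show $\|s\|$ cannot be below $\|m\|$ either.

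Next I would bring in the major $M$ of $\lam$, which by definition is the leaf of $\lam$ closest to criticality, so $\|M\|\ge\|s\|$ (in the sense of proximity to $\frac13$), and by Lemma \ref{l:closest} (applied as in Definition \ref{d:major}) we have $\|M\|\ge\frac14$. The comajor $c$ is the short sibling of $M$, and the minor is $m=\si_3(M)=\si_3(c)$. The key geometric point: since $s$ is the closest-to-criticality leaf in its forward orbit and is long/medium, Lemma \ref{l:str}(4) says no forward image of $s$ enters the interior of $\sh(s)$; but also, since $M$ is the overall closest-to-criticality leaf of $\lam$, the leaf $s$ lies in $\sh(M)$ (by property (d) of short strips — any leaf closer to criticality than $M$, equivalently here any leaf that is long/medium and at least as close, lies in $\sh(M)$; and $s$ cannot be strictly closer than $M$ since $M$ is the closest of all). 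So either $s\in\{M,M',-M,-M'\}$, or $s$ is a long/medium leaf strictly inside $\sh(M)$ — but the only long/medium leaves whose presence inside $\sh(M)$ is consistent with $\lam$ being a lamination are $M,M',-M,-M'$ themselves (all other long/medium chords in $\sh(M)$ would cross one of these four). Hence $s\in\{M,M',-M,-M'\}$, so $\|s\|=\|M\|$, and then $\si_3(s)$ is $\pm m$ with $\|\si_3(s)\|=\|m\|$.

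Now I close the loop. We have $\|s\|=\|M\|\ge\frac14$, and by the length function (Remark \ref{length-fn}(2)) the forward orbit of $s=M$ has minimal length exactly $\|m\|=\Gamma(\|M\|)$: indeed once a leaf reaches length $\ge\frac14$, its image under $\Gamma$ can be at most $\frac14$ and then lengths only grow back until they re-approach $\frac13$, and by Lemma \ref{l:str}(4) no image re-enters $\sh(M)$ so no image is ever closer to criticality than $M$; thus $\|m\|$ is the shortest length in the orbit of $M$, hence the shortest length reachable from $\ell$ at all once we are in the ``short'' regime. This contradicts $\|s\|<\delta\le\|m\|$ (since $s$, being in the forward orbit structure generated by $\ell$, cannot itself be shorter than $\|m\|$) — unless $s$ was already $\ell$ itself or equal to $\pm m,\pm M$, all of which have length $\ge\delta$. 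Therefore no forward image of $\ell$ is shorter than $\min(\|\ell\|,\|m\|)$, as claimed. The main obstacle I anticipate is the bookkeeping in the second paragraph: pinning down precisely why the closest-to-criticality leaf in a forward orbit that lies in $\sh(M)$ must literally be one of the four major leaves, rather than merely ``close'' to them, which is where properties (a)–(e) of short strips and the non-crossing axiom (L1) have to be invoked carefully, together with the fact that $M$ is the global extremum so strict nesting (property (e)) forces $\sh(s)\supseteq\sh(M)$ and thus $s$ cannot be strictly interior.
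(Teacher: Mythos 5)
Your proposal contains a genuine gap at its very first step, and the paper's argument is far more direct.

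The proof hinges on producing a leaf $s$ with $\|s\|<\delta=\min(\|\ell\|,\|m\|)$ that is simultaneously the \emph{closest to criticality in its own forward orbit}, so that Lemma~\ref{l:str}(4) applies. But such an $s$ cannot exist, and your construction does not produce one. By Lemma~\ref{l:closest}, a leaf that is closest to criticality in its own forward orbit has length at least $\frac14$; in particular it is long/medium. A leaf of length below $\delta\le\|m\|$ you are trying to conjure will in general be short (and certainly cannot have distance to $\frac13$ less than the major's, which is the global extremum), and for any short leaf $s$ the image $\si_3(s)$ has length $3\|s\|$, which is \emph{strictly closer} to $\frac13$ than $s$ is; so no short leaf is ever closest to criticality in its orbit. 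Your phrase ``take $s$ so that $\|s\|$ is as close to $\frac13$ as possible subject to $\|s\|<\delta$'' minimizes distance to $\frac13$ within a truncated set, not within the forward orbit of $s$, and these are different things; the side remark ``if no such leaf had this property one could keep shortening'' actually runs backwards for short leaves (they lengthen under $\si_3$). Once the existence of $s$ collapses, the rest of the argument (identifying $s$ with one of $\pm M,\pm M'$) has nothing to stand on. A secondary worry: the claim that the only long/medium leaves of $\lam$ inside $\sh(M)$ are $M,M',-M,-M'$ is not a direct consequence of (L1) or property~(d); property~(d) governs leaves \emph{closer to criticality} than $M$, and there are none of those, but it does not by itself pin the boundary leaves down as the only long/medium residents.

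The paper's proof uses no contradiction and no extremal leaf. It is just two monotonicity observations applied pointwise along the orbit: if a leaf is long/medium, its image is no shorter than $\|m\|$ (because the major is the leaf of $\lam$ closest to criticality, and $\Gamma$ is monotone in distance to $\frac13$); if a leaf is short, its image is three times longer. Chaining these, no forward image of $\ell$ ever drops below $\min(\|\ell\|,\|m\|)$. You should replace the extremal/contradiction scaffolding with this direct induction along the orbit.
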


\begin{proof}
Since majors are the closest to criticality leaves of $\lam$, the image of
any long/medium leaf of $\lam$ is no shorter than the minor. On the other
hand, the image of any short leaf is three times longer than the leaf itself.
The lemma follows from these observations.
\end{proof}

\begin{Lemma}\label{cmajor-end-points}
Let $c$ be a comajor of $\lam$.

\begin{enumerate}

\item If $c$ is non-degenerate, then one of the following holds:

\begin{enumerate}

\item the endpoints of $c$ are both strictly preperiodic with the same
    preperiod and period;

\item the endpoints of $c$ are both not preperiodic, and $c$ is
    approximated from both sides by leaves of $\lam$ that have no common
    endpoints with $c$.

\end{enumerate}

\item If $M_c$ is non-critical, then its endpoints are both periodic, or
    both strictly preperiodic with the same preperiod and period, or both
    not preperiodic.

\end{enumerate}

In particular, a non-degenerate comajor is not periodic.

\end{Lemma}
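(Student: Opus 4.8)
The plan is to reduce both statements to the structure of the minor $m=\si_3(c)=\si_3(M_c)$ together with that of the critical set $G$ bounded by the majors $M_c$ and $M'_c$, which exists as a critical gap because $M_c$ is non-critical; and to exploit that, since $c$ is short ($\|c\|<\tfrac16$), $\si_3$ restricted to a small neighbourhood of $c$ is a homeomorphism onto a neighbourhood of $m$. Consequently an endpoint of $c$ has exactly the preperiodicity data of the corresponding endpoint of $m$, shifted by one. So once we know the two endpoints of $m$ are both periodic, both strictly preperiodic with a common preperiod and period, or both non-preperiodic, the same (up to that shift) passes to $c$; the real work is therefore part (2), plus the exclusion of periodic endpoints of $c$.

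First I would dispose of the ``in particular'' clause directly. Suppose $c$ is a non-degenerate comajor of period $p\ge 1$. Since $\si_3(c)=\si_3(M_c)=m$, we get $\si_3^{\,p}(M_c)=\si_3^{\,p-1}(m)=\si_3^{\,p-1}(\si_3(c))=\si_3^{\,p}(c)=c$, i.e.\ $c$ is a forward image of $M_c$. A short computation with the length function $\Gamma$ gives $\|c\|=|\tfrac13-\|M_c\||$ and $\|m\|=\Gamma(\|M_c\|)=3|\tfrac13-\|M_c\||=3\|c\|$; since $M_c$ is non-critical one has $\tfrac14\le\|M_c\|\le\tfrac12$, whence $\|c\|<\min(\|M_c\|,\|m\|)$ (the only borderline case, $\|M_c\|=\tfrac12$, forces $M_c$ to be an invariant diameter, so that $\si_3^{\,p}(M_c)=M_c\neq c$ outright). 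Either way this contradicts Lemma~\ref{short-leaves}. Hence $c$ is not periodic; granting part~(1) this is also immediate, but the computation above is what actually rules out the bad case.

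For part~(2), with $M_c$ non-critical and $G$ the critical gap having $M_c,M'_c$ among its edges (and $-G\neq G$ on the symmetric side), I would split on the nature of $G$. If $G$ is infinite, it is periodic by Lemma~\ref{l:infinite-periodic}, and Theorems~\ref{t:gaps-1} and~\ref{t:kiwi} identify it as a Fatou, caterpillar, or Siegel gap; one then reads off the claim for the edge $M_c$, using that $M_c$ is never (pre)critical — a critical leaf has length exactly $\tfrac13$, strictly closer to criticality than $M_c$, which is impossible as $M_c$ is a major. In the Fatou/caterpillar cases a non-(pre)critical edge is (pre)periodic, its orbit lands on a periodic edge whose endpoints lie in a single cycle, so the endpoints of $M_c$ are both periodic, or both strictly preperiodic with a common preperiod and period; in the Siegel case a non-(pre)critical edge has orbit semiconjugate to an irrational rotation, so both endpoints are non-preperiodic. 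If $G$ is finite and does not collapse, then by Theorem~\ref{nwt-thm} it is preperiodic, and by Theorem~\ref{t:kiwi} and Lemma~\ref{fn-gaps} it eventually maps to a $1$- or $2$-rotational periodic polygon whose vertices lie in at most two cycles of the \emph{same} period; pulling back gives the claim for $M_c$. The remaining case, $G$ collapsing (so $\si_3(G)=m$ and, as in Lemma~\ref{col-quad}, every preimage of $G$ also collapses), is the delicate one: here $G$ furnishes no automatic (pre)periodicity, and one must argue directly — via the two critical sets $G,-G$ and the Short Strip Lemma~\ref{l:str} — that if one endpoint of $m$ is periodic, then so is the other, with the same period (and similarly for strictly preperiodic endpoints). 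This synchronization in the collapsing case is the step I expect to be the main obstacle.

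Finally, part~(1) follows from (2) and from ``$c$ not periodic''. Transfer the conclusion for $m$ to $c$ through the local homeomorphism $\si_3$ near $c$: if the endpoints of $m$ are both strictly preperiodic with common data, or both non-preperiodic, then so are those of $c$ (the preperiod raised by one in the first case), giving (1a) or (1b) — in (1b) one must still note that a non-preperiodic (hence, here, non-precritical) $c$ is isolated on neither side, since a gap adjacent to $c$ would be non-critical, hence by Theorem~\ref{nwt-thm} preperiodic, forcing $c$ to be (pre)periodic; a further short argument rules out $c$ being the limit of a fan of leaves through one of its endpoints, yielding approximation by leaves with no endpoint in common with $c$. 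If instead the endpoints of $m$ are periodic of period $\pi$ (by (2), exactly the case $M_c$ periodic or $M_c$ strictly preperiodic of preperiod $1$), I claim $c$ still has no periodic endpoint: if $x$ were one, then $\si_3(x)$ is a periodic endpoint of $m$ and $\si_3^{\pi}(m)=m$, so $\si_3^{\pi}(c)$ is a leaf of $\lam$ with the same image $m$ as $c$ — hence a member of the sibling collection $\{c,M_c,M'_c\}$ — that shares the endpoint $x$ with $c$; but only $c$ among the three has $x$ as an endpoint, so $\si_3^{\pi}(c)=c$, contradicting that $c$ is not periodic. Thus both endpoints of $c$ are non-periodic preimages of the periodic endpoints of $m$, i.e.\ strictly preperiodic of preperiod $1$ and common period $\pi$, which is case (1a). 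Besides the collapsing-gap synchronization flagged above, the remaining point needing care is the (standard) fact that the two endpoints of a periodic major always have the same period, which is what makes ``common period'' hold throughout case~(1a).
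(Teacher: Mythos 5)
Your scheme---reduce to the major $M_c$/minor $m$, then argue via the type of the critical gap $G$ bounded by $M_c,M'_c$---is a genuinely different organization from the paper's proof, and you get several pieces right: the non-periodicity of $c$ via Lemma~\ref{short-leaves} and $\|\si_3(c)\|=3\|c\|$ is exactly the paper's opening step; the transfer from part~(2) to part~(1) by the local homeomorphism near $c$ and the sibling-collection uniqueness argument for excluding a periodic endpoint of $c$ is correct (with the minor fix that the exponent should be the period of the endpoint $x$, which may be $2$ or $3$ times the period of $m$, not the period of $m$ itself); and the infinite/finite-noncollapsing subcases are plausible consequences of Theorems~\ref{t:gaps-1} and~\ref{t:kiwi} and Lemma~\ref{fn-gaps}.

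But the case you yourself flag as ``the main obstacle''---when $G$, $-G$ are collapsing quadrilaterals---is precisely where the entire content of the lemma lives, and your plan contains no idea for it. This is a genuine gap, not a loose end. The paper never splits by type of critical gap at all; its proof is uniform. It first invokes the external Lemma~2.25 of~\cite{bopt20} to conclude that if one endpoint of $c$ is preperiodic, both are, with the \emph{same eventual period}. It then shows the \emph{preperiods} coincide by a single argument: unequal preperiods mean some iterated image $\ol{ab}$ of $c$ has one periodic endpoint and one not, so $\ol{ab}$ and $\si_3^n(\ol{ab})=\ol{ad}$ are two distinct leaves out of the common endpoint $a$ whose forward orbits eventually coalesce. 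The only mechanism for such a coalescence is that forward images of $\ol{ab}$ and $\ol{ad}$ land on two edges of the collapsing quadrilateral $\pm Q_c$; but then an eventual image of $c$ (equivalently of $m$) is a \emph{short} edge of $\pm Q_c$, whose length equals $\|c\|<\|m\|$, contradicting Lemma~\ref{short-leaves} (equivalently, the Short Strip Lemma~\ref{l:str}, via which majors can never be mapped to short edges of $\pm Q_c$). This one argument subsumes all of your gap-type cases, and it is exactly the ``synchronization'' you could not supply. For alternative (1b) the paper also rules out $c$ being a one-sided accumulation of a fan of leaves through one of its endpoints by citing Lemma~4.7 of~\cite{bopt20}; you mention ``a further short argument'' but do not supply it. Finally, your invocation of Theorem~\ref{nwt-thm} in (1b) as if it simply said ``preperiodic'' misses its second alternative: you must also treat a gap adjacent to $c$ that eventually collapses to $\pm Q_c$, which the paper does explicitly.
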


\begin{proof}
Set $M=M_c$. It follows from Lemma \ref{short-leaves} and the equality
$\|\si_3(c)\|=3\|c\|$ that $c$ is non-periodic. Since $c$ is non-degenerate,
the lamination $\lam$ has two symmetric critical gaps $G$, $-G$, and pairs of
majors $\{M, M'\}$ and $\{-M,-M'\}$ as edges of $G$ and $-G$, respectively.
Assume first that at least one endpoint of $c$ is preperiodic. Then, by Lemma
2.25 of \cite{bopt20}, both endpoints of $c$ are preperiodic and the period
of eventual images of the endpoints of $c$ is the same.

We claim that their preperiods are equal. Indeed, otherwise we may assume
that an eventual \emph{non-periodic} image $\ell=\ol{ab}$ of $c$ has an
$n$-periodic endpoint $a$ and the leaf $\si_3^n(\ell)=\ol{ad}$ is
$n$-periodic.
This means that $\si_3^i(\ol{ab})=\si_3^i(\ol{ad})$ for some minimal $i>0$.
It is easy to see that the only way this can
happen is as follows: there is a collapsing quadrilateral $Q$ which is the convex
hull of majors, say, $\{M, M'\}$, and forward images of the leaves $\ol{ab},
\ol{ad}$ are edges of $Q$.

We may assume that in fact $\ol{ab}, \ol{ad}$ themselves are edges of $Q$
(and so they have equal $\si_3$-images), $\ol{ad}$ is periodic, and $\ol{ab}$
is not.
By Lemma \ref{l:str}, the majors $M$ and $M'$ can never be mapped to the short
sides of $Q$. Hence we may assume that $M=\ol{ad}$ is periodic. However, by
the above assumption it is $\ol{ab}$ which is an eventual image of $c$, and
hence an eventual image of $M$, a contradiction with Lemma \ref{l:str}. We
see that if $c$ is preperiodic, then its endpoints are of the same period and
the same preperiod. Notice that by the above $c$ is non-periodic. Since
$\si_3(M)=\si_3(c)$, the endpoints of $M$ are either both periodic or both
preperiodic with the same period and preperiod.

Assume now that $c$ has non-preperiodic endpoints. We claim that $c$ cannot
be an edge of a gap $G$. Indeed, otherwise, by Theorem \ref{nwt-thm}, the gap $G$ must
at some moment collapse to a leaf. At this moment the image $\si_3^k(G)$ of
$G$ must be a collapsing quadrilateral, which means that, again, $\si_3^k(G)$ is the
convex hull $Q$ of, say, $M$ and $M'$. However, $\si_3^k(c)=\si_3^{k}(M)$ is
an edge of $Q$. This implies that $M$ is periodic and $c$ is preperiodic, a
contradiction with the assumption.

Finally, suppose that $c=\ol{xy}$ is the limit of leaves with endpoint $x$.
Together with $c$ they form an \emph{infinite cone} of leaves.
By Lemma 4.7 of \cite{bopt20}, this implies that $c$ is preperiodic, again a contradiction.
\end{proof}

\subsection{Pullback laminations}

We describe the set of symmetric laminations in terms of their comajors by
giving a criterion for a symmetric pair to be a comajor pair. Also, we
construct a specific symmetric \emph{pullback} lamination for any symmetric
pair satisfying that criterion.

\begin{definition}[Legal pairs]\label{d:legal} Suppose that a symmetric pair
$\{c,-c\}$ is either degenerate or satisfies the following conditions:

\begin{enumerate}
    \item[(a)] no two iterated forward images of $\pm c$ cross, and 

    \item[(b)] no forward image of $c$ crosses the interior of $\sh(M_c)$.
\end{enumerate}

Then $\{c, -c\}$ is said to be a \emph{legal pair}.

\end{definition}

We need a concept of a \emph{pullback} which dates back to Thurston
\cite{thu85}. Observe that even in the absence of a lamination we can extend
$\si_3$ onto given chords inside $\cdisk$, and, as long as the chords are
unlinked, this is consistent (we keep the notation $\si_3$ for such an
extension). Also, even without a lamination we call two-dimensional convex
hulls of closed subsets of $\uc$ \emph{gaps}.

\begin{definition}\label{d:pullback}
Suppose that a family $\A$ of chords is given and $\ell$ is a chord. A
\emph{pullback chord of $\ell$ generated by $\A$} is a chord $\ell'$ with
$\si_3(\ell')=\ell$ such that $\ell'$ does not cross chords from $\A$. An
\emph{iterated pullback chord of $\ell$ generated by $\A$} is a pullback
chord of an (iterated) pullback chord of $\ell$.
\end{definition}

Depending on $\A$, (iterated) pullback chords of certain chords may or may
not exist. In some cases though, several (iterated) pullback chords can be
found. While the construction beow can be given in general, we will from now
on restrict our attention to the cubic symmetric case. Lemma \ref{l:16}
follows from Lemma \ref{l:diameter} and is left to the reader.

\begin{Lemma}\label{l:16}
The only two symmetric laminations $\lam_1$, $\lam_2$ with comajors of length $\frac16$
have two critical Fatou gaps 
and are as follows.

\noindent $(1)$ The lamination $\lam_1$ has the comajor pair $(\ol{\frac16
\frac13}$, $\ol{\frac23 \frac56})$. The gap $U_1'$ is invariant; $U'_1\cap
\uc$ consists of all $\ga\in \uc$ such that $\si_3^n(\ga)\in [0, \frac12]$.
The gap $U''_1$ is invariant; $U''_1\cap \uc$ consists of all $\ga\in \uc$
such that $\si_3^n(\ga)\in [\frac12, 0]$. The gaps $U_1', U_1''$ share an
edge $\ol{0 \frac12}$; their edges are the appropriate pullbacks of
$\ol{0\frac12}$ that never separate in $\disk$ any two leaves from the
collection $\{\ol{\frac16 \frac13}$, $\ol{\frac23 \frac56}$,
$\ol{0\frac12}\}$.

\noindent $(2)$ The lamination $\lam_2$ has the comajor pair
$(\ol{\frac{11}{12}\frac{1}{12}}$, $\ol{\frac{5}{12} \frac{7}{12}})$. The
gaps $U'_2$, $U''_2$ form a period 2 cycle, and the set $(U_2'\cup U_2'')\cap
\uc$ consists of all $\ga\in \uc$ such that $\si_3^n(\ga)\in [\frac1{12},
\frac5{12}]\cup [\frac7{12}, \frac{11}{12}]$. The gaps $U_2', U_2''$ share an
edge $\ol{\frac14 \frac34}$; their edges are iterated pullbacks of
$\ol{\frac14 \frac34}$ that neither eventually cross nor eventually separate
any two leaves from the collection $\{\ol{\frac{11}{12}\frac{1}{12}}$,
$\ol{\frac{5}{12} \frac{7}{12}}$, $\ol{\frac14 \frac34}\}$.
\end{Lemma}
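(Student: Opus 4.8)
The plan is to push everything through the trichotomy of Lemma~\ref{l:diameter}. A non-degenerate comajor is a leaf of $\lam$ (it is a sibling of a major, hence a leaf by invariance), so $\lam$ has a leaf of length $\frac16$ and, by Lemma~\ref{l:diameter}, we are in case~(1) or case~(2). Since $\|\si_3(c)\|=3\|c\|=\frac12$ for a comajor $c$ of length $\frac16$, the minor $m=\si_3(c)$ is a diameter, hence $\si_3$-invariant (two diameters either coincide or cross, and $\si_3(m)$ is again a leaf of $\lam$), so $m\in\{\ol{0\frac12},\ol{\frac14\frac34}\}$. In case~(1) we have $\ol{0\frac12}\in\lam$, so $m=\ol{0\frac12}$; the sibling collection of a length-$\frac16$ leaf mapping onto $\ol{0\frac12}$ is $\{\ol{0\frac12},\ol{\frac16\frac13},\ol{\frac23\frac56}\}$, so the major $M_c=\ol{0\frac12}$, the comajor pair is $\{\ol{\frac16\frac13},\ol{\frac23\frac56}\}$, and in particular $\ol{0\frac12}$ is closest to criticality among the leaves of $\lam$ (distance $\frac16$). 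Case~(2) is the same computation with $m=\ol{\frac14\frac34}$, giving comajor pair $\{\ol{\frac{11}{12}\frac1{12}},\ol{\frac5{12}\frac7{12}}\}$.

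Next I would construct the lamination by pullbacks. In case~(1) set $U_1'=\ch\{\ga:\si_3^n(\ga)\in[0,\tfrac12]\ \forall n\}$ and $U_1''=\ch\{\ga:\si_3^n(\ga)\in[\tfrac12,1]\ \forall n\}$. These are invariant, have Cantor $\uc$-parts, share the edge $\ol{0\frac12}$, each carries a degree-$2$ first return map (the two branches being the two components of the complement of $\ol{0\frac12}$ restricted suitably), and each contains a critical chord, so they are the two critical Fatou gaps. Their edges are exactly the iterated $\si_3$-pullbacks of $\ol{0\frac12}$ that never separate in $\disk$ two members of $\{\ol{0\frac12},\ol{\frac16\frac13},\ol{\frac23\frac56}\}$; the two ``caps'' over $\ol{\frac16\frac13}$ and $\ol{\frac23\frac56}$ each map $\si_3$-one-to-one onto the opposite closed half-disk and are filled recursively by the corresponding pullbacks. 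One checks that the resulting set of chords is closed, $\tau$-symmetric and $\si_3$-invariant and has comajors $\ol{\frac16\frac13},\ol{\frac23\frac56}$: this is $\lam_1$. Case~(2) is identical except that $\si_3$ interchanges the two halves of $\cdisk\sm\ol{\frac14\frac34}$, so $U_2',U_2''$ form a period-$2$ cycle and the pullbacks are taken so as to never \emph{eventually} cross or separate members of $\{\ol{\frac14\frac34},\ol{\frac{11}{12}\frac1{12}},\ol{\frac5{12}\frac7{12}}\}$; this is $\lam_2$.

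For uniqueness, let $\lam$ be any symmetric lamination with a comajor of length $\frac16$, say in case~(1); I want to show it has no chords beyond those of $\lam_1$. Any leaf $\ell$ of $\lam$ avoids the three leaves $\ol{0\frac12},\ol{\frac16\frac13},\ol{\frac23\frac56}$, so its endpoints lie in a single one of the pieces $U_1'\cap\uc$, $U_1''\cap\uc$, or a pullback of a cap; on $U_1'$ (say) all forward images of $\ell$ keep their endpoints in $[0,\frac12]$, and by Remark~\ref{length-fn}(4) and the Short Strip Lemma (Lemma~\ref{l:str}(4)) some forward image $\ell^*$ is closest to criticality in its orbit, hence long/medium. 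Since the closest-to-criticality leaf of $\lam$ is at distance $\frac16$ and a diameter with both endpoints in $[0,\frac12]$ can only be $\ol{0\frac12}$, we get $\ell^*=\ol{0\frac12}$, so $\ell$ is an iterated pullback of $\ol{0\frac12}$; by the structure of the degree-$2$ Fatou gap $U_1'$ it is one of its boundary edges and not a diagonal (a diagonal would have an image crossing it or a leaf of $\lam$, cf.\ Lemma~\ref{l:adiag-1} and Corollary~\ref{no-diagonal}). The leaves inside the caps are handled by transporting this argument through the $\si_3$-one-to-one copies, so $\lam=\lam_1$, and similarly $\lam=\lam_2$ in case~(2).

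I expect the genuine difficulty to be exactly this last paragraph: ruling out finer configurations that still have a comajor of length $\frac16$ (for instance subdividing $U_1'$, $U_1''$ or a cap by chains of collapsing quadrilaterals). The tools for this are the Short Strip Lemma --- no forward image of a closest-to-criticality leaf re-enters its short strips --- together with Lemma~\ref{cs-gap} (exactly two critical sets) and Lemma~\ref{l:infinite-periodic} (an infinite critical gap is periodic), which jointly force the two critical sets to be precisely $U_1'$ and $U_1''$ and leave no room for extra subdividing leaves; the bookkeeping of the pullbacks and the cap-copies is the part that needs care but is routine given these lemmas.
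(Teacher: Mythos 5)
The paper gives no proof here (it says the lemma ``follows from Lemma~\ref{l:diameter} and is left to the reader''), so yours is the only detailed argument to assess, and your overall plan --- apply the trichotomy of Lemma~\ref{l:diameter}, identify the comajor pair and major, describe $\lam_1,\lam_2$ as pullbacks of the invariant diameter, and prove uniqueness by forcing every leaf to map onto that diameter --- is the right one. There is, however, a genuine gap in the uniqueness step. You show that $\ell^*$, the closest-to-criticality iterate of a leaf $\ell$ whose orbit is confined to the arc $[0,\frac12]$, is long/medium, and then conclude $\ell^*=\ol{0\frac12}$ after remarking that the only diameter with endpoints in $[0,\frac12]$ is $\ol{0\frac12}$. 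That remark presupposes $\ell^*$ is a diameter, which you have not established: ``long/medium'' only gives $\|\ell^*\|\ge\frac16$, and a chord of length exactly $\frac16$ with both endpoints in $[0,\frac12]$ (such as $\ol{0\,\frac16}$) is long/medium but is not a diameter, so it has not been excluded as a value of $\ell^*$. The missing ingredient is the observation that since the major of $\lam$ is at distance exactly $\frac16$ from criticality, no leaf of $\lam$ has length in the open interval $(\frac16,\frac12)$; together with Remark~\ref{length-fn}(4) (some iterate has length $\ge\frac14$), this forces some iterate of $\ell$ to have length exactly $\frac12$, i.e.\ to be the invariant diameter. With this step inserted, your inference that $\ell$ is an iterated pullback of $\ol{0\frac12}$ does go through.

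Two secondary matters also need to be written out rather than asserted. First, you never actually verify that the chord family you define is a closed, $\sigma_3$-invariant, symmetric lamination whose comajor pair is the claimed one --- that is precisely what the lemma asserts and requires the same kind of check carried out for the pullback prelaminations $\mathcal{C}_c$ later in the paper. Second, the ``cap'' bookkeeping is glossed over: $\sigma_3$ carries the cap over $\ol{\frac16\frac13}$ one-to-one onto the \emph{opposite} half-disk, so the recursion that fills in a cap draws its leaves from the other side of the lamination (and similarly in the period-two case (2)); this crossover should be made explicit so the reader can see that the family really does close up into $\lam_1$ or $\lam_2$.
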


Though the laminations from Lemma \ref{l:16} are not pullback laminations as
described below, knowing them allows us to consider only legal pairs with
comajors of length less than $\frac16$ and streamline the proofs.

\begin{center} \textbf{Construction of a symmetric pullback lamination $\lam(c)$ for a legal pair $\{c, -c\}$.} \end{center}

\noindent\textbf{Degenerate case.} For $c\in\uc$, let $\pm \ell=\pm M_c$.
 (call $\ell$, $-\ell$ and their pullbacks
``leaves'' even though we apply this term to existing laminations, and we are
only constructing one). Consider two cases.

(a) If $\ell$ and $-\ell$ do not have periodic endpoints, then the family of
all iterated pullbacks of $\ell, -\ell$ generated by $\ell, -\ell$ is denoted
by $\mathcal{C}_c$.

(b) Suppose that $\ell$ and $-\ell$ have periodic endpoints $p$ and $-p$ of
period $n$. Then there are two similar cases. First, the orbits of $p$ and
$-p$ may be distinct (and hence disjoint). Then iterated pullbacks of $\ell$
generated by
$\ell$, $-\ell$ are well-defined (unique) until 
 the $n$-th step, when there are two iterated
pullbacks of $\ell$ that have a common endpoint $x$ and 
 share other endpoints with $\ell$. 
Two other iterated pullbacks of $\ell$ located on the other side of $\ell$
 have a common endpoint $y\ne 0$ and 
 share other endpoints with $\ell$.
These four iterated pullbacks of $\ell$ form a collapsing quadrilateral $Q$ with diagonal $\ell$;
 moreover, $\si_3(x)=\si_3(y)$ and $\si_3^n(x)=\si_3^n(y)=z$ is
 the non-periodic endpoint of $\ell$. Evidently, $\si_3(Q)=\ol{\si_3(p)\si_3(x)}$ is
the $(n-1)$-st iterated pullback of $\ell$. Then in the pullback lamination
$\lam(c)$ that we are defining we postulate the choice of \emph{only the
short pullbacks} among the above listed iterated pullbacks of $\ell$. So,
only two short edges of $Q$ are included in the set of pullbacks
$\mathcal{C}_c$. A similar situation holds for $-\ell$ and its iterated
pullbacks.

In general, the choice of pullbacks of the already constructed leaf $\hell$ is ambiguous
only if $\hell$ has an endpoint $\si_3(\pm \ell)$. In this case we
\emph{always} choose a short pullback of $\hell$. Evidently, this defines a
set $\mathcal{C}_c$ of chords in a unique way.

We claim that $\mathcal{C}_c$ is an invariant prelamination. To show that
$\mathcal{C}_c$ is a prelamination we need to show that its leaves do not
cross. Suppose otherwise and choose the minimal $n$ such that $\hell$ and
$\tell$ are pullbacks of $\ell$ or $-\ell$ under at most the $n$-th iterate
of $\si_3$ that cross. By construction, $\hell, \tell$ are not critical.
Hence their images $\si_3(\hell),$ $\si_3(\tell)$ are not degenerate and do
not cross. It is only possible if $\hell, \tell$ come out of the endpoints of
a critical leaf of $\lam$. We may assume that $\|\hell\|\ge \frac16$ (if
$\hell$ and $\tell$ are shorter than $\frac16$ then they cannot cross).
However by construction this is impossible. Hence $\mathcal{C}_c$ is a
prelamination. The claim that $\mathcal{C}_c$ is invariant is
straightforward; its verification is left to the reader. By Theorem
\ref{t:prelam-cl}, the closure of $\mathcal{C}_c$ is an invariant lamination
denoted $\lam(c)$. Moreover, by construction $\mathcal{C}_c$ is symmetric
(this can be easily proven using induction on the number of steps in the
process of pulling back $\ell$ and $-\ell$). Hence $\lam(c)$ is a symmetric
invariant lamination.

\noindent\textbf{Non-degenerate case.} As in the degenerate case, we will
talk about leaves even though we are still constructing a lamination. By
Lemma \ref{l:16}, we may assume that $|c|<\frac16$. Set $\pm M=\pm M_c, \pm
Q=\pm Q_c$. If $d$ is an iterated forward image of $c$ or $-c$, then, by
Definition \ref{d:legal}(b), it cannot intersect the interior $Q$ or $-Q$.
Consider the set of leaves $\mathcal{D}$ formed by the edges of $\pm Q$ and
$\cup \bigcup_{m=0}^{\infty}\{\sigma_3^{m}(c),\sigma_3^{m}(-c)\}$. It follows
that leaves of $\mathcal{D}$ do not cross among themselves. The idea is to
construct pullbacks of leaves of $\mathcal{D}$ in a step-by-step fashion and
show that this results in an invariant prelamination $\mathcal{C}_c$ as in
the degenerate case.

More precisely, we proceed by induction. Set $\mathcal{D}=\mathcal{C}_c^0$.
Construct sets of leaves $\mathcal{C}_c^{n+1}$ by collecting pullbacks of
leaves of $\mathcal{C}_c^{n}$ generated by $Q$ and $-Q$ (the step of
induction is based upon Definition \ref{d:legal} and Definition
\ref{d:pullback}). The claim is that except for the property (D2)(1) from
Definition \ref{inv-lam} (a part of what it means for a lamination to be
backward invariant), the set $\mathcal{C}_c^n$ has all the properties of
invariant laminations listed in Definition \ref{inv-lam}. Let us verify this
property for $\mathcal{C}_c^1$. Let $\ell\in \mathcal{C}_c^1$. Then
$\si_3(\ell)\in \mathcal{D}$, so property (D1) from Definition \ref{inv-lam}
is satisfied. Property (D2)(2) is, evidently, satisfied for edges of $Q$ and
$-Q$. If $\ell$ is not an edge of $\pm Q$, then, since leaves $\pm
\si_3(Q)=\si_3(\pm c)$ do not cross $\si(\ell)$, and since on the closure of
each component of $\uc\sm [Q\cup -Q]$ the map is one-to-one, then $\ell$ will
have two sibling leaves in $\mathcal{C}_c^1$ as desired. Literally the same
argument works for $\ell\in \mathcal{C}_c^{n+1}$ and proves that each set
$\mathcal{C}_c^{n+1}$ has properties (D1) and (D2)(2) from Definition
\ref{inv-lam}. This implies that $\bigcup_{i\ge 0}
\mathcal{C}_c^i=\mathcal{C}_c$ has all properties from Definition
\ref{inv-lam} and is, therefore, an invariant prelamination. By Theorem
\ref{t:prelam-cl}, its closure $\lam(c)$ is an invariant lamination.

The lamination $\lam(c)$ is called the \emph{pullback lamination (of $c$);}
we often use $c$ as the argument, instead of the less discriminatory $\{c,
-c\}$.

\begin{Lemma}\label{pull-back-lam1}
A legal pair $\{c,-c\}$ is the comajor pair of the symmetric lamination
$\lam(c)$. A symmetric pair $\{c, -c\}$ is a comajor pair if and only if it
is legal.
\end{Lemma}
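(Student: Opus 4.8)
The plan is to split the biconditional into three pieces: (1) for any legal pair $\{c,-c\}$, the comajor pair of the pullback lamination $\lam(c)$ (constructed just above) is exactly $\{c,-c\}$; (2) conversely, if $\{c,-c\}$ is the comajor pair of \emph{some} symmetric lamination $\lam'$, then $\{c,-c\}$ is legal; and (3) assemble the equivalence from (1) and (2). Before the main argument I would use Lemma~\ref{l:16} to dispose of the easy cases: the degenerate case (legality holds by convention, $\pm M_c$ is a critical leaf), and the case $\|c\|=\frac16$ (the only two such laminations are $\lam_1,\lam_2$, whose comajors are explicitly legal pairs). So from now on $\{c,-c\}$ is a non-degenerate legal pair with $\|c\|<\frac16$; then $M_c$ is long/medium and not a diameter, $Q_c=C(M_c)$, and $\sh(M_c)=Q_c\cup -Q_c$.

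For (1): by construction the four long/medium chords $M_c,M_c',-M_c,-M_c'$ (the nontrivial edges of $\pm Q_c$) are leaves of $\lam(c)$, and all four lie at distance $|\frac13-\|M_c\||=w(C(M_c))$ from criticality. I claim no leaf of $\lam(c)$ is strictly closer to criticality. Indeed, by property (d) of short strips any such leaf would be contained in $\sh(M_c)=Q_c\cup -Q_c$; but every leaf of $\mathcal{C}_c$ is either an edge of $\pm Q_c$, or a forward image of $\pm c$ — which avoids the interior of $\sh(M_c)$ by legality (b) — or an iterated pullback, which by Definition~\ref{d:pullback} does not cross $\pm Q_c$ and hence does not meet $\mathrm{int}(\sh(M_c))$; passing to the closure $\lam(c)$ does not change this. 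Hence $M_c,M_c',-M_c,-M_c'$ are precisely the majors of $\lam(c)$. Since $M_c$ is long/medium, Lemma~\ref{l:sml} forces its sibling collection to be of type (sml) and uniquely determined by $M_c$; as $c$ is short with $\si_3(c)=\si_3(M_c)$, the short member of that collection is $c$ itself, and likewise $-c$ is the short sibling of $-M_c$. Therefore $\{c,-c\}$ is the comajor pair of $\lam(c)$.

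For (2): let $\lam'$ be symmetric with comajor pair $\{c,-c\}$, $\|c\|<\frac16$, let $M=M_c$ be the associated major, and $m=\si_3(M)=\si_3(c)$ the minor (a leaf of $\lam'$). First, $c\in\lam'$: by backward invariance (D2)(2), $m$ has three disjoint preimage leaves in $\lam'$, and since these form a sibling collection containing the long/medium leaf $M$, Lemma~\ref{l:sml} identifies them with the unique (sml) collection $\{M,M',M''\}$; as $c$ is the short sibling of $M$, we get $c=M''\in\lam'$. Now legality (a) is immediate: $c,-c$ and all their forward images $\si_3^n(\pm c)$ for $n\ge1$ (which equal $\si_3^n(\pm M)$) are leaves of $\lam'$, and distinct leaves of a lamination do not cross. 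For legality (b): $M$ is the closest-to-criticality leaf of $\lam'$, hence a fortiori the closest to criticality in its own forward orbit, so Lemma~\ref{l:str}(4) gives that no forward image of $M$, equivalently no $\si_3^n(c)$ with $n\ge1$, enters $\mathrm{int}(\sh(M_c))$; and $c=M''$ itself, being disjoint from $M$ and $M'$, lies in a component of $\cdisk\sm(M\cup M')$ different from $Q_c=\mathcal S(M,M')$, and similarly is disjoint from $\mathrm{int}(-Q_c)$, so $c\notin\mathrm{int}(\sh(M_c))$ as well. Thus $\{c,-c\}$ is legal.

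Finally, (3): the ``only if'' direction of the second sentence is exactly (2); for ``if'', given a legal pair $\{c,-c\}$ the construction preceding this lemma produces a symmetric invariant lamination $\lam(c)$, and by (1) its comajor pair is $\{c,-c\}$. Together with the first sentence (which is (1)), this proves the lemma. I expect the main obstacle to be step (1): namely ruling out that the pullback procedure slips in a leaf inside the critical strips $\pm Q_c$ that is closer to criticality than $M_c$, so that the ``intended'' leaves are genuinely the majors — this is precisely where legality condition (b) and the ``only short pullbacks'' choice in the construction are essential. The low-length boundary cases, where several chords are simultaneously closest to criticality (e.g. $\|c\|=\frac1{12}$, so $\|M_c\|=\frac14$, or $\|c\|=\frac16$), also need a moment's care and are handled via Lemma~\ref{l:str}(2) and Lemma~\ref{l:16}.
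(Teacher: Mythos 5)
Your decomposition into (1) showing $\lam(c)$ has $\{c,-c\}$ as comajor pair, (2) showing comajor pairs are legal, and (3) assembling the equivalence matches what the paper intends but does not spell out (the paper calls step (1) ``straightforward'' and cites Lemma~\ref{short-leaves} for step (2); your use of Lemma~\ref{l:str}(4) for step (2) is morally equivalent and also correct). There is, however, a concrete slip in step (1). You write ``$Q_c=C(M_c)$, and $\sh(M_c)=Q_c\cup -Q_c$,'' but in the paper $Q_c$ is the \emph{convex hull} of $M_c\cup M_c'$ (a quadrilateral with two short edges), whereas $C(M_c)=\mathcal S(M_c,M_c')$ is the \emph{strip} bounded by $M_c$, $M_c'$, and two circle arcs; thus $C(M_c)\supsetneq Q_c$, the difference being the two ``lens'' regions cut off by the short edges of $Q_c$. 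Because of this, the inference ``a pullback does not cross $\pm Q_c$ and hence does not meet $\mathrm{int}(\sh(M_c))$'' is false as stated: a pullback chord with both endpoints in a single short arc of $C(M_c)\cap\uc$ lies inside $\mathrm{int}(\sh(M_c))$ without crossing any edge of $\pm Q_c$, and such leaves are in fact produced by the construction. The saving observation, which you gesture at in your final paragraph but do not supply, is two-part: any chord that is \emph{closer to criticality than $M_c$} and lies in $\mathrm{int}(\sh(M_c))$ must connect the two short arcs of one of the strips, hence crosses a short edge of $\pm Q_c$ and so is excluded by the pullback rule; while a pullback that lands in one of the lens regions has both endpoints in a single arc of length $\|c\|$, hence has length $<\|c\|<\frac16$ and is much farther from criticality than $M_c$. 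With this patch, your conclusion that $\pm M_c,\pm M_c'$ are precisely the majors of $\lam(c)$ (and hence $\pm c$ the comajors) is correct, and the rest of the argument goes through.
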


\begin{proof} The verification of the fact that $\{c, -c\}$ is the comajor pair
of $\lam(c)$ is straightforward; we leave it to the reader. On the other
hand, a comajor pair of a symmetric lamination is legal by Lemma
\ref{short-leaves}.
\end{proof}

\subsection{The lamination of comajors}

\begin{definition}\label{under-dfn}
For a non-diameter chord $n=\overline{ab}$, the smaller of the two arcs into
which $n$ divides $\uc$ is denoted by $H(n)$. Denote the closed subset of
$\overline{\D}$ bounded by $n$ and $H(n)$ by $R(n)$. Given two comajors $m$
and $n$, write $m\prec n$ if $m\subset R(n)$,
 and say that $m$ is \emph{under} $n$.
\end{definition}

Note that, if $m\prec n$, then any set of pairwise non-crossing chords that
separate $m$ from $n$ in $\disk$ is linearly ordered by $\prec$.

\begin{Lemma}\label{cmajorL0}
Let $\{c,-c\}$ and $\{d,-d\}$ be legal pairs, where $c$ is  degenerate and
$c\prec d$. Suppose that either $c$ is not an endpoint of $d$, or $\si_3(c)$
is not periodic. Then the leaves $\si_3^n(d)$ with $n\ge 1$ are disjoint from
the majors of $\lam(c)$. In particular, if the endpoints of $\si_3(d)$ are
non-periodic then the leaves $\si_3^n(d)$ with $n\ge 1$ are disjoint from
$\pm M_c$.
\end{Lemma}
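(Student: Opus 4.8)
The plan is to argue by contradiction. Suppose some $\si_3^n(d)$ with $n \ge 1$ is \emph{not} disjoint from a major of $\lam(c)$; since distinct leaves of a lamination do not cross, this means $\si_3^n(d)$ either crosses a major or shares an endpoint with it, or — after one checks that the crossing case is directly excluded by legality of the pairs together with the construction of $\lam(c)$ — we may reduce to the situation where $\si_3^n(d)$ shares an endpoint with $\pm M_c$ (or with a sibling major, but by symmetry it suffices to treat $M_c$). Recall that in the degenerate case $\pm M_c = \pm \ell$ is a critical leaf of $\lam(c)$, so $\si_3(M_c)$ is a point, namely $\si_3(c) = \si_3(M_c)$.

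\textbf{The main step.} First I would observe that because $c \prec d$ and $c$ is degenerate, the leaves $\si_3^n(d)$, $n \ge 1$, cannot cross the critical leaves $\pm M_c$: indeed legality of $\{d,-d\}$ forbids forward images of $d$ from crossing $\sh(M_d)$, and the relative position $c \prec d$ forces $\pm M_c$ to lie inside the short strips $\sh(\pm M_d)$ together with the arcs cut off by $d$; a forward image of $d$ crossing $M_c$ would then have to cross $d$ or $-d$ (violating legality (a)) or enter $\sh(M_d)$ (violating legality (b)). So the only remaining possibility is that some $\si_3^n(d)$ shares an endpoint $v$ with $\pm M_c$. The key is that the endpoints of $M_c$ are $c' := c + \tfrac13$ and $c'' := c - \tfrac13$ (translating $c$ regarded as a point), and $\si_3(v) = \si_3(c)$. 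Thus $v$ is an endpoint of some $\si_3^n(d)$ and also lies in the $\si_3$-fiber of $\si_3(c)$. Since $\si_3(v)=\si_3(c)$ is the image of a vertex of the chord $\si_3^{n-1}(d)$ under $\si_3$, pushing forward once more we get that $\si_3(c)$ is an endpoint of $\si_3^{n+1}(d)$; iterating, $\si_3(c)$ is a periodic or preperiodic point appearing as an endpoint of an eventual image of $d$. I would then split into cases according to whether $\si_3(c)$ is periodic.

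\textbf{Closing the cases.} If $\si_3(c)$ is \emph{not} periodic, then $v$ being in its backward orbit and an endpoint of $\si_3^n(d)$ means $\si_3^{n-1}(d)$ has a vertex mapping to $\si_3(c)$, hence an iterated image of $d$ passes through $\si_3(c)$ itself; but $\si_3(c)$ is (by hypothesis of the lemma) non-periodic, and following its orbit we find that $d$ shares the endpoint structure forcing, via the hypothesis ``$c$ is not an endpoint of $d$ or $\si_3(c)$ is not periodic'' combined with the disjointness already established, a contradiction — concretely, $\si_3(c) \in \V(\si_3^{n}(d))$ propagates backward to show $c$ is an endpoint of $d$, which when $\si_3(c)$ is non-periodic is exactly the excluded overlap. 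If $\si_3(c)$ \emph{is} periodic, the hypothesis says $c$ is not an endpoint of $d$; but then the backward-orbit analysis of $v$ shows the only pullbacks of the relevant leaves through the periodic point $\si_3(c)$ that are included in $\lam(c)$ are the \emph{short} ones (this is exactly the ambiguity-resolution rule in the construction of $\mathcal{C}_c$: at a vertex lying over $\si_3(\pm \ell)$ one always takes the short pullback), so the long/medium major $M_c$ cannot be the shared-endpoint leaf, a contradiction.

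\textbf{The last sentence.} For the ``in particular'' clause: if the endpoints of $\si_3(d)$ are non-periodic, then automatically $\si_3(c)$ cannot be periodic (a shared endpoint with an eventual image of $\si_3(d)$ would force periodicity constraints already handled), so the hypotheses of the main statement are met and the conclusion ``$\si_3^n(d)$ disjoint from the majors of $\lam(c)$'' in particular gives disjointness from $\pm M_c$. The step I expect to be the genuine obstacle is the careful bookkeeping in the periodic case: one must track precisely which pullbacks land in $\mathcal{C}_c$ versus its closure $\lam(c)$, and rule out that $M_c$ arises as a limit of short pullbacks sharing the endpoint $v$ — this is where one leans hardest on Lemma~\ref{l:str} (the Short Strip Lemma) and on the fact that $\|M_c\| \ge \tfrac14$ while the competing short pullbacks have length $< \tfrac16$, so no accumulation of short leaves can produce $M_c$.
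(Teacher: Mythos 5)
Your proposal doesn't close the case that carries all the real content, and the step you describe as a contradiction is not one.

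The hypothesis of the lemma is a \emph{disjunction}: either $c$ is not an endpoint of $d$, or $\si_3(c)$ is not periodic. The only configuration that is excluded is the conjunction ``$c$ is an endpoint of $d$ \emph{and} $\si_3(c)$ is periodic.'' In your ``non-periodic'' branch you claim to derive a contradiction by ``propagating backward'' to conclude that $c$ is an endpoint of $d$, and then declare this to be ``exactly the excluded overlap.'' But it is not: when $\si_3(c)$ is non-periodic, the hypothesis perfectly well permits $c$ to be an endpoint of $d$. This is in fact the hard case of the lemma (and the paper treats it as such), so your argument there is vacuous. The backward propagation is also not justified — having $\si_3(c)$ as an endpoint of some $\si_3^{m}(d)$ does not single out which preimage of $\si_3(c)$ is the relevant endpoint of the earlier images, and certainly does not force $c$ to be an endpoint of $d$.

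What the paper actually does in this hard case is a chain-of-leaves argument: writing $N$ for a major of $\lam(d)$ and using $\si_3(d)=\si_3(N)=\si_3(N')$, it supposes $\si_3^n(N)$ shares an endpoint with $M=M_c$ and $N$, applies Lemma~\ref{l:str} to see $N$ never returns to its short strips, and then shows by iterating $\si_3^n$ that the images of $N$ concatenate with consistent circular orientation; local expansion then forces the concatenation to close up at the other endpoint of $N$, making $N$ periodic and contradicting Lemma~\ref{cmajor-end-points} (which guarantees non-periodic endpoints when $\si_3(c)$ is non-periodic). The symmetric sub-case, where the shared endpoint is at $-M$ and $-N$, requires a second, more delicate orientation bookkeeping which the paper spells out in sub-cases (2a)/(2b). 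None of this appears in your proposal. Separately, your treatment of the periodic case is also off target: there, by hypothesis $c$ is not an endpoint of $d$, and the paper resolves it in one line via the Short Strip Lemma (because then $\pm M_c$ lies in the open interior of $\sh(M_d)$, away from its boundary edges). Your appeal to the ``short pullback rule'' in the construction of $\mathcal{C}_c$ is misplaced: $M_c$ is the initial critical leaf of the construction, not a pullback chosen at a later step, so that rule says nothing about whether $M_c$ can share an endpoint with a forward image of $d$.
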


\begin{proof}
Let the majors of $\lam(c)$ be critical leaves $M$ and $-M$; let the majors
of $\lam(d)$ be leaves $N$, $N'$, $-N$, $-N'$. Clearly, $M$ and $-M$ lie
(except, perhaps, for the endpoints) in $\sh(N)$ and separate (except,
perhaps, for the endpoints) $N$ from $N'$ and $-N$ from $-N'$. The claim
holds by Lemma \ref{l:str} if $c$ is not an endpoint of $d$. If $c$ is an
endpoint of $d$, then by the assumption $\si_3(c)$ is nonperiodic. Thus, the
endpoints of $d$ and those of the majors $N$ and $N'$  are nonperiodic by
Lemma~\ref{cmajor-end-points}. Note that $\si_3(d)=\si_3(N)=\si_3(N')$. If
our claim fails, then $\si_3^n(d)=\si_3^n(N)=\si_3^n(N')$ shares an endpoint
with (1) the majors $M$ and $N$, or (2) the majors $-M$ and $-N$.
In both cases, the notation for the majors is chosen so that $M$ and $N$ (then also $-M$ and $-N$)
 have a common endpoint.
Thus, (1) means $\si_3^n(d)\cap M\cap N\ne\varnothing$, and (2) means
$\si_3^n(d)\cap (-M)\cap (-N)\ne\varnothing$. Consider these two cases.

(1) Let $\si_3^n(d)=\si_3^n(N)$ share an endpoint $y$ with $M$ and $N=\ol{xy}$.
Observe that, by Lemma \ref{l:str}, the leaf $N$ never maps to
its short strips. Applying $\si_3^n$ to $N\cup \si_3^n(N)$ we see that
$\si_3^{2n}(N)$ is concatenated to $\si_3^n(N)$ and the vertices of leaves
$N,$ $\si_3^n(N)$, and $\si_3^{2n}(N)$ are ordered positively or negatively
on $\uc$. If we continue, we will see that further $\si_3^n$-images of $N$
are ordered in the same fashion. This implies that at some moment this chain
of leaves will connect to the endpoint $x$ of $N$ (recall that $\si_3^n$ is a
local expansion), and $N$ will turn out to be periodic, a contradiction.

(2) If $\si_3^n(d)=\si_3^n(N)$ shares an endpoint with $-M$ and $-N/-N'$
(say, $-N$), then, by symmetry, $\si_3^n(-d)=\si_3^n(-N)$ shares an endpoint
with $M$ and $N$. Thus, leaves $N,$ $\si_3^n(-N),$ and $\si_3^{2n}(N)$ are
concatenated. The idea, as before, is to apply the appropriate iterate of
$\si_3$ (in this case $\si_3^{2n}$) that shifts $N$ to the next occurrence of
this leaf in the concatenation and use the fact that any concatenation like
that is one-to-one and orientation preserving. There are two cases here.

(2a) Suppose that $N=\ol{xy}$ and $\si_3^n(-N)=\ol{yz}$ are oriented in one
way while $\si_3^n(-N)=\ol{yz}$ and $\si_3^{2n}(N)=\ol{zu}$ are oriented
differently. E.g., suppose that $x>y>u>z$ (so that the triple $x, y, z$ is
negatively oriented while the triple $y, z, u$ is positively oriented). Then,
if $\si_3^{2n}(\si_3^n(-N))=\ol{uv}$, then $z, u, v$ must also be negatively
oriented and so all these points are ordered on the circle as follows:
$x>y>u>v>z$. Repeating this over and over we will see that leaves $N,$
$\si_3^{2n}(N),$ $\dots,$ $\si_3^{k\cdot 2n}(N)$ are consecutively located
under one another. However, this is impossible as $\si_3$ is a local
expansion.

(2b) Suppose that $N=\ol{xy}$ and $\si_3^n(-N)=\ol{yz}$ are oriented in the
same way as $\si_3^{2n}(N)=\ol{zu}$. Iterating $\si_3^{2n}$ on these two
leaves we see, similar to (a), that all the images of $x, y, z$ are oriented
in the same way as $x, y, z$ themselves. Hence, again, the points
$\si_3^{k\cdot 2n}(x)$ form a sequence of points that converges back to $x$
which is impossible unless on a finite step the process stops because the
next link in the concatenation dead-ends into the point $x$. The leaf from
the concatenation with an endpoint $x$ is an image of $N=\ol{xy}$ or
$\si_3^n(-N)=\ol{yz}$. Suppose that it is an image of $N$. Then, the next
image of $\si_3^n(-N)=\ol{yz}$ is forced to coincide with $N$ because it
cannot enter short strips of $N$. The thus constructed finite polygon maps by
$\si_3^{2n}$ onto itself and has all edges periodic, a contradiction with $N$
being non-periodic. If the leaf from the concatenation with endpoint $x$ is
some image of $\si_3^n(-N)=\ol{yz}$, then it immediately follows that $N$ is
periodic, again a contradiction.

Thus, the leaves $\si_3^n(d), n\ge 1$ are disjoint from $\pm M$ as claimed.
\end{proof}

\begin{Lemma}\label{cmajor-comp-d}
Let $\{c,-c\}$ and $\{d,-d\}$ be legal pairs, where $c$ is degenerate and
$c\prec d$. Suppose that $c$ is not an endpoint of $d$, or $\si_3(c)$ is not
periodic. Then $d\in \lam(c)$. In addition, the following holds.

\begin{enumerate}

\item Majors $D, D'$ of $\lam(d)$ are leaves of $\lam(c)$ unless $\lam(c)$
    has two finite gaps $G, G'$ that contain $D, D'$ as their diagonals,
    share a critical leaf $M_c=M$ of $\lam(c)$ as a common edge, and are
    such that $\si_3(G)=\si_3(G')$ is a preperiodic gap.

\item If majors of $\lam(d)$ are leaves of $\lam(c)$ and $\ell\in\lam(d)$ is
    a leaf that never maps to a short side of a collapsing quadrilateral of
    $\lam(d)$, then $\ell\in\lam(c)$.

\end{enumerate}

\end{Lemma}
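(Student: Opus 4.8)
The plan is to build on Lemma \ref{cmajorL0}, which already guarantees that the forward images $\si_3^n(d)$, $n \ge 1$, are disjoint from the majors $\pm M = \pm M_c$ of $\lam(c)$. First I would prove that $d \in \lam(c)$. Since $c$ is degenerate, $M = M_c$ and $-M$ are critical leaves of $\lam(c)$, and the pullback construction of $\lam(c)$ produces the family $\mathcal{C}_c$ of iterated pullbacks of $\pm M$ (with short-pullback choices at periodic obstructions). The chord $d$ is short with $c \prec d$, so $d$ lies in $R(d)$ near $c$, hence inside one of the short strips determined by $\pm M$; its image $\si_3(d)$ is the minor of $\lam(d)$, which by legality of $\{d,-d\}$ has forward orbit disjoint from the interiors of $\sh(M_d)$. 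Using Lemma \ref{cmajorL0} one sees that the entire forward orbit of $d$ avoids $\pm M$ and the associated critical strips, so $d$ is compatible with all the constraints defining $\mathcal{C}_c$. More precisely, I would argue that $d$ arises as an iterated pullback of one of its own forward images (which, being disjoint from $\pm M$, already lies in the relevant region for pulling back within $\lam(c)$); since the pullback choices in $\mathcal{C}_c$ are forced except at periodic obstructions, and the hypothesis "$c$ is not an endpoint of $d$ or $\si_3(c)$ is not periodic" precisely rules out the obstruction that could push $d$ out, we conclude $d \in \lam(c)$.

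For part (1), consider the majors $D, D'$ of $\lam(d)$. These are long/medium, and by Lemma \ref{l:sml} together with the structure near $d$ they are the long/medium siblings with $\si_3(D) = \si_3(D') = \si_3(d)$, forming the boundary of the critical set $Q_d$ of $\lam(d)$. The critical leaf $M = M_c$ lies (except possibly for endpoints) inside $\sh(N)$ where $N = D$, separating $D$ from $D'$ — this is the same configuration exploited in Lemma \ref{cmajorL0}. I would now pull $D$ and $D'$ back through $\lam(c)$: since their common forward image $\si_3(d)$ has orbit disjoint from $\pm M$, the iterated pullbacks of $\si_3(d)$ within $\mathcal{C}_c$ are well-defined and include candidate leaves equal to $D$ and $D'$, unless at the final pullback step the two pullbacks of a common predecessor collapse into a configuration that is blocked. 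The blocked case is exactly when $D, D'$ become diagonals of finite gaps $G, G'$ of $\lam(c)$ sharing the critical edge $M$: this happens precisely when $\si_3(d)$ (equivalently, the common image of $D,D'$) is preperiodic to a periodic gap, forcing $\si_3(G) = \si_3(G')$ preperiodic — compare the periodic-obstruction discussion in the degenerate pullback construction and in the proof of Lemma \ref{cmajor-end-points}. I would verify that outside this exceptional case, $D, D' \in \lam(c)$ by checking the three sibling leaves at each pullback step stay disjoint from $\pm M$ and from each other, using Lemma \ref{l:str} to rule out re-entry into short strips.

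For part (2), assuming $D, D' \in \lam(c)$, I would show that any leaf $\ell \in \lam(d)$ whose forward orbit never hits a short side of a collapsing quadrilateral of $\lam(d)$ is also in $\lam(c)$. The argument is an induction on the pullback level of $\ell$ inside $\mathcal{C}_d$: the forward image $\si_3^k(\ell)$ is eventually a forward image of $d$ (or of a major), hence a leaf of $\lam(c)$ by the first part and by Lemma \ref{cmajorL0}; then pulling back step by step, at each stage the relevant pullback within $\lam(c)$ exists and is unique because the only ambiguity in $\mathcal{C}_c$ occurs at leaves with an endpoint $\si_3(\pm M)$, where a short pullback is chosen — and the hypothesis that $\ell$ never maps to a short side of a collapsing quadrilateral of $\lam(d)$ guarantees $\ell$ is not one of the leaves discarded by that short-pullback choice, so the $\lam(c)$-pullback and the $\lam(d)$-pullback agree. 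Closing $\mathcal{C}_d$ under limits and invoking (L2) then gives $\ell \in \lam(c)$.

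The main obstacle I anticipate is part (1): precisely pinning down when the pullbacks of $D$ and $D'$ through $\lam(c)$ fail to be leaves and instead become diagonals of the exceptional gaps $G, G'$. This requires carefully tracking the periodicity of the endpoints of $\si_3(d)$ relative to the critical orbit of $\lam(c)$, and showing that the "bad" configuration is exactly the one in the statement (two finite gaps sharing $M$, with preperiodic common image) and nothing worse can happen — for instance, ruling out crossings, which follows from the No Wandering Triangles Theorem \ref{nwt-thm} and Corollary \ref{dis-fn-gaps} applied to the finite gaps in question. The other parts are more routine bookkeeping with the pullback construction and Lemmas \ref{l:str}, \ref{l:sml}, and \ref{cmajorL0}.
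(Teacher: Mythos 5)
Your proposal captures the right overall framework -- Lemma \ref{cmajorL0} to keep the orbit of $d$ away from $\pm M_c$, then a pullback argument, and an induction for part (2) -- but it has a genuine gap at the central step.

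Showing that the forward orbit of $d$ avoids $\pm M_c$ (and, by propagating crossings forward, avoids all leaves of $\lam(c)$) only establishes that $d$ is either a leaf of $\lam(c)$ \emph{or a diagonal of a gap of $\lam(c)$}. You never rule out the diagonal alternative, and that is where essentially all the work lies. The paper splits this into two cases: if no iterated image of the ambient gap $G$ has a critical edge, then Theorem \ref{nwt-thm} plus Corollary \ref{no-diagonal} dispose of it; but if some $\si_3^m(G)$ does have a critical edge, one has to grind through the possibilities -- show $G$ can only be infinite in this configuration, invoke Theorem \ref{t:kiwi} and Theorem \ref{t:gaps-1} to pin down that $H=\si_3^m(G)$ is a periodic caterpillar gap, conclude from Lemma \ref{l:adiag-1} that an eventual image of $d$ is a \emph{periodic diagonal} of $H$, and then manufacture a fixed return triangle from the two periodic cycles on $\partial H$ to contradict Lemma \ref{P-or-P}. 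Saying $d$ ``is compatible with all the constraints defining $\mathcal{C}_c$'' skips past exactly this delicate analysis; without it, $d\in\lam(c)$ is not proven.

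There is a second, related gap in your treatment of (1). You assert that when $D,D'$ fail to be leaves of $\lam(c)$ one lands in the configuration of two finite gaps $G,G'$ sharing $M$, but you do not justify \emph{finiteness} of $G,G'$. The paper proves this by assuming they are infinite, reducing (via Theorem \ref{t:kiwi}) to a periodic gap of degree one, and then using Theorem \ref{t:gaps-1} to split into caterpillar and Siegel cases: in the caterpillar case $\si_3(M)=\si_3(c)$ would be periodic (contradicting the hypothesis), and in the Siegel case the edges $q,q'$ adjacent to $Q$ would force an endpoint of $M$ to be periodic (again a contradiction). Likewise, preperiodicity of $\si_3(G)$ is not a ``periodic-obstruction'' phenomenon but a direct application of Theorem \ref{nwt-thm} once finiteness and non-collapse are known. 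Your induction in part (2) is in the spirit of the paper's argument and would be acceptable once (1) and the $d\in\lam(c)$ step are fully supplied, but as written the proposal omits the two hardest pieces of the proof.
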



\begin{proof}
We claim that the iterated images of $d$ do not intersect leaves of
$\lam(c)$. By Lemma \ref{cmajorL0}, no iterated image of $d$ intersects the
majors $\pm M_c=\pm M$ of $\lam(c)$. Let an iterated image $\ell_d$ of $d$
intersect an iterated pullback $\ell_M$ of $M$ or $-M$. If they share an
endpoint, then after a few steps we will arrive at an iterated image of $d$
that shares an endpoint with $M$ or $-M$, a contradiction. Suppose that
$\ell_{d}$ crosses $\ell_M$. The only way $\si_3(\ell_d)$ and $\si_3(\ell_M)$
``lose'' their crossing is when $\ell_d$, $\ell_M$ ``come out'' of the
distinct endpoints of a critical leaf. Since, by Lemma \ref{cmajorL0}, the
leaf $\ell_d$ is disjoint from $\pm M$, this is impossible. Hence
$\si_3(\ell_d)$ and $\si_3(\ell_M)$ cross. Repeating this argument, we see
that the associated iterated images of $\ell_d$ and $\ell_M$ cross each
other. Since $\ell_M$ is mapped to $M$ or $-M$ under a finite iteration of
$\si_3$, in the end we will have an image of $d$ crossing $M$ or $-M$, a
contradiction.

So $d$ is a leaf of $\lam(c)$ or a diagonal of a gap in $\lam(c)$. Let us
rule out the latter. Since $\lam(c)$ has two critical leaves, there are no
gaps of $\lam(c)$ on which $\si_3$ has degree $m>1$; suppose, by way of
contradiction, that $d$ is a diagonal of a gap $G$ of $\lam(c)$, and consider
cases.

(a) If no iterated image of $G$ has a critical edge, then by Theorem
\ref{nwt-thm}, the gap $\si_3^k(G)$ is periodic for some minimal $k\ge 0$,
and, by Theorem \ref{t:gaps-1}, the gap $\si_3^k(G)$ is finite. A
contradiction with Corollary \ref{no-diagonal}.

(b) Suppose that the gap $\si_3^m(G)$ has a critical edge for a minimal $m\ge
0$. Consider two cases. First, suppose that $c$ is strictly under $d$. Since
$G$ is a gap of $\lam(c)$ containing $d$ as a diagonal, then there are two
cases. First, there may exist two sibling gaps  of $G$ separated in $\disk$
by the critical leaf $M$ of $\lam(c)$, but themselves non-critical. Each such
gap contains a major $M_d$ or $M'_d$ as a diagonal. However, $\si_3^m(G)$ has
a critical edge which then implies that $d$ is mapped into its own short
strips, a contradiction with $d$ being legal. Now, the second case is when
there are two gaps of $\lam(c)$, denoted by $A$ and $A'$, that share $M$ as a
common edge and contain $M_d$ and $M'_d$, respectively. Evidently,
$\si_3(G)=\si_3(A)=\si_3(A')$. Since the gap $\si_3^m(G)$ has a critical
edge, we may assume that $\si_3^m(G)=A$. It follows that $G$ cannot be
finite.

Since $\si_3^m$ is one-to-one on the vertices of $G$, we have that
$\si_3^m(d)$ is a diagonal of $\si_3^m(G)=H$. Since $G$ is infinite, $H$ is
(pre)periodic (by Theorem \ref{t:kiwi}). Since by Theorem \ref{t:gaps-1} the
cycle of gaps from the orbit of $H$ must have at least one gap with critical
edge, then $H$ itself is periodic. Since images of $d$ do not cross each
other, $H$ is not a Siegel gap. Hence $H$ is a caterpillar gap. Since by
Lemma \ref{cmajorL0}, the iterated images of $d$ are disjoint from $\pm M$,
then by Lemma \ref{l:adiag-1}, an eventual image of $d$ is a periodic
diagonal of $H$. We claim that this is impossible.

We may assume that $M$ is an edge of $H$. By Theorem \ref{t:gaps-1}, an
endpoint of $M$ is periodic. Then by the assumptions $c$ is not an endpoint
of $d$, and by Lemma \ref{l:str} the orbit of $d$ is disjoint from that of
$M$. Hence $\partial H$ contains two cycles, that of $\si_3(M)=\si_3(c)$, and
that of an endpoint of a periodic image of $d$.
Since the images of $d$ are
diagonals, $\partial H$ contains 3 periodic points from 2 cycles. This
allows one to connect a certain triple of points from these two cycles so that they
form a fixed return triangle $T$.
Consider the forward orbit of $T$ and then
 the grand orbit of $T$, where iterated pullbacks of $T$ and  of its iterated images are constructed
 consistently with $\lam(c)$.
Since by our assumption
$\lam(c)$ has caterpillar gaps with edges $\pm M$, it is easy to see that
this yields a cubic symmetric lamination with a fixed return triangle, a
contradiction with Lemma \ref{P-or-P}. So, $d$ is an edge of $G$ and a leaf
of $\lam(c)$, and so is $-d$. Let us now prove the remaining claims.

(1) Consider the critical quadrilateral $Q$ of $\lam(d)$ with
 $\si_3(Q)=\si_3(d)$.
Two long/medium edges of $Q$ are majors $D$, $D'$ of
$\lam(d)$. If $c$ and $d$ are disjoint, then the remaining two short edges of
$Q$ cross $M$ and cannot be leaves of $\lam(c)$. Hence in that case $D$, $D'$
are leaves of $\lam(c)$ as desired. Consider the case when $c$ is an endpoint
of $d$. Then $M$ is a (critical) diagonal of $Q$, and both endpoints of $M$
are non-periodic (this is because by our assumptions $\si_3(c)=\si_3(M)$ is
non-periodic). Suppose that $D$, $D'$ are not leaves of $\lam(c)$. By
properties of laminations two edges of $Q$ (say, $q$ and $q'$) are leaves of
$\lam(c)$. By our assumptions there are gaps $G, G'$ that contain $D$, $D'$ as
their diagonals and share a critical leaf $M$ of $\lam(c)$ as a common edge.

We claim that $G, G'$ are finite. Indeed, if they are infinite, then they are
(pre)periodic. Since $\lam(c)$ is cubic and has two critical leaves, the
cycle of infinite gaps to which $G$ and $G'$ eventually map has a gap with a
critical edge. It follows that one of the gaps $G, G'$ (say, $G$) is
periodic, and the first return map to $G$ is of degree one. By Theorem
\ref{t:gaps-1}, consider caterpillar and Siegel cases. Suppose that $G$ is
caterpillar.
Then, by Theorem \ref{t:gaps-1}, the leaf $\si_3(M)=\si_3(c)$ is periodic,
a contradiction with the assumptions. Suppose that $G$ is Siegel. Then by
Theorem \ref{t:gaps-1}, both $q$ and $q'$ must eventually map to $M$ which implies
that an endpoint of $M$ is periodic, again a contradiction. Thus, $G$ and
$G'$ are finite.
Since $D$, $D'$ are diagonals of $G$, $G'$, respectively, $\si_3(G)$ is a
gap (not a leaf). By Theorem \ref{nwt-thm} and by the assumptions $\si_3(G)$
is preperiodic.

(2) Observe that by the assumptions $d$, $D$, $D'$ and their iterated images
all belong to $\lam(c)$. Denote this family of leaves by $X$. We claim that
iterated pullbacks of these leaves are leaves of $\lam(c)$. First consider a
leaf $\ell\in \lam(d)$ such that $\si_3^n(\ell)=x\in X$. We claim that
$\ell\in \lam(c)$. Let us use induction over $n$. The base of induction is
already established as $X\subset \lam(c)$. Suppose that the claim is proven
for $n=k$ and prove it for $n=k+1$. Consider $\ell\in \lam(d)$ such that
$\si_3^{k+1}(\ell)=x\in X$. Then, by induction, $\si_3(\ell)\in \lam(c)$.
Now, by properties of laminations, this implies that $\ell\in \lam(c)$, too,
unless, say, the following holds: $\ell$ shares an endpoint with $M$, there
is another chord $t$ that forms a triangle with $\ell$ and $M$, and in fact
$t$ is a leaf of $\lam(c)$ while $\ell$ is not (other cases are similar). We
claim that this is impossible. Indeed, if $\ell$ shares an endpoint with $M$
and is disjoint from the interior of $Q$, then $t$ must cross $D$, a
contradiction as $D$ is a leaf of $\lam(c)$ by the assumptions, and cannot be
crossed by another leaf of $\lam(c)$. Hence, $\ell\in \lam(c)$, as desired.

Consider the iterated pullbacks of leaves of $X$ that are leaves of
$\lam(d)$. By the previous paragraph they are leaves of $\lam(c)$. Hence the
closure of this set of leaves is also a subset of $\lam(c)$. Therefore, the
only possible leaves of $\lam(d)$ that are not leaves of $\lam(c)$ are
iterated pullbacks of the short edges of $\pm Q$ and their limits). However,
in the statement of the lemma we explicitly exclude leaves $\ell$ that are
iterated pullbacks of short edges of $\pm Q$. Hence it suffices to show that
the lengths of these pullbacks converges to zero (this will imply that limits
of pullbacks of the short edges of $\pm Q$ are points of $\uc$). This follows
from Lemma~\ref{col-quad}.
\end{proof}

Let us prove an important property of pullback laminations.

\begin{Lemma}\label{l:major-pb}
Let $d$ be a comajor. Then the iterated pullbacks of the majors $\pm M_d$ and
$\pm M'_d$ of $\lam(d)$ are dense in the pullback lamination $\lam(d)$ with,
possibly, one exception: the leaves of $\lam(d)$ that are short sides of
critical quadrilaterals of $\lam(d)$ and their iterated pullbacks might not
be approximated by iterated pullbacks of the majors of $\lam(d)$. Thus,
iterated pullbacks of the minors of $\lam(d)$ are dense in $\lam(d)$.
\end{Lemma}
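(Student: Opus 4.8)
The plan is to reduce the whole lemma to a single accumulation statement about the critical quadrilaterals and then to prove that statement from the gap structure of $\lam(d)$.

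\emph{Step 1 (reduce majors to minors).} First I would check by a direct count that, inside $\lam(d)$, the leaves mapping to $m$ under $\si_3$ are exactly the two majors $M_d,M'_d$, the two short edges of the critical quadrilateral $Q=Q_d$, and the comajor $d$ itself (symmetrically for $-m$). Hence every iterated pullback of a major is an iterated pullback of a minor, and the set of iterated pullbacks of $\{m,-m\}$ is the union of the iterated pullbacks of the majors, of the iterated pullbacks of the short edges of $\pm Q$ (this is precisely the exceptional set in the statement), and of the iterated pullbacks of $\pm d$. Since by Lemma \ref{col-quad} the iterated pullbacks of the short edges have lengths tending to $0$, their only accumulation points are degenerate; and since $\pm d$ is itself a pullback of $\pm m$, both conclusions of the lemma will follow once I show: (i) the iterated pullbacks of $\{m,-m\}$ are dense in $\lam(d)$, and (ii) $\pm d$ lies in the closure of the iterated pullbacks of the majors.

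\emph{Step 2 (reduce to a single leaf).} Let $\mathcal W$ be the union of the edges of all iterated pullbacks, generated by $\pm Q$, of the critical quadrilaterals $\pm Q$; thus $\mathcal W$ equals the union of the iterated pullbacks of the majors with the iterated pullbacks of the short edges of $\pm Q$, and $\mathcal W$ is closed under taking $\pm Q$-pullbacks. By the construction, $\lam(d)=\overline{\mathcal C_d}$ where $\mathcal C_d$ is generated by iterated $\pm Q$-pullbacks of $\mathcal D=(\text{edges of }\pm Q)\cup\mathrm{orb}(\pm d)$ and $\mathrm{orb}(\pm d)=\{d,-d\}\cup\mathrm{orb}(\pm m)$. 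Pullbacks generated by $\pm Q$ depend continuously on the leaf pulled back (on the closure of each component of $\cdisk\sm[Q\cup(-Q)]$ the map $\si_3$ is a homeomorphism onto its image, and no leaf of $\lam(d)$ crosses an edge of $\pm Q$). Hence if $m\in\overline{\mathcal W}$, then $\overline{\mathcal W}$ is $\si_3$-invariant, so it contains $\mathrm{orb}(\pm m)$, hence $\pm d$ and all iterated pullbacks of all of these, i.e.\ $\overline{\mathcal W}=\overline{\mathcal C_d}=\lam(d)$; this gives (i), and (ii) follows because $d$ is non-degenerate while the short-edge pullbacks accumulate only on degenerate leaves. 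So the lemma comes down to proving $m,-m\in\overline{\mathcal W}$.

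\emph{Step 3 (prove $m\in\overline{\mathcal W}$).} Suppose first $d$ is non-degenerate, so that the two critical sets of $\lam(d)$ are the quadrilaterals $\pm Q$; then by the last clause of Theorem \ref{t:gaps-1} the lamination $\lam(d)$ has no infinite gaps, hence no critical leaves and no collapsing triangles, and by Lemma \ref{col-quad} every gap collapsing to a leaf is an iterated $\pm Q$-pullback of $\pm Q$. If $m$ is non-isolated, or is an edge of a finite gap whose forward orbit meets a collapsing gap, then $m\in\overline{\mathcal W}$ at once. Otherwise $m$ is an edge of a finite gap whose forward orbit contains a periodic polygon, and then (Theorem \ref{nwt-thm}, Lemmas \ref{P-or-P}, \ref{fn-gaps}) $m$ is (pre)periodic. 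If $m$ is periodic, $\mathrm{orb}(m)$ is finite and each of its leaves is literally an iterated pullback of $m$, so $m\in\mathcal W$. If $m$ is strictly preperiodic, let $n_\star$ be the first periodic leaf in $\mathrm{orb}(m)$; chasing periodic leaves through the construction (they pull back to periodic elements of $\mathcal D$, and the only periodic elements of $\mathcal D$ here are the leaves of $\mathrm{orb}(\pm m)$ in the cycle of $\pm n_\star$) shows every periodic leaf of $\lam(d)$ lies in $\mathrm{orb}(\pm n_\star)$, so it suffices to prove $n_\star\in\overline{\mathcal W}$. When $d$ is degenerate, $M_d=M'_d$ is a critical leaf; by Lemma \ref{l:infinite-periodic} the infinite critical gaps of $\lam(d)$ are periodic, and the argument runs as before with $\pm Q$ replaced by these periodic Fatou gaps (or by the collapsing quadrilateral that appears in the construction when $\pm M_d$ have periodic endpoints), while the two degenerate comajors of length $\frac16$ are handled directly by Lemma \ref{l:16}.

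\emph{Main obstacle.} The hard part is the accumulation statement $n_\star\in\overline{\mathcal W}$ — that the backward orbit of the critical sets is dense at the (pre)periodic leaf onto which the critical orbit lands. This is the lamination analogue of the density of iterated preimages of a point in a dendritic Julia set, and I expect to prove it via an expansion estimate near $n_\star$: using that $\si_3$ is a local homeomorphism of derivative $3$ on $\uc$, that a one-sided neighbourhood of $n_\star$ returns to itself under an expanding iterate of $\si_3$, and that — by the absence of infinite gaps together with Lemmas \ref{P-or-P}, \ref{fn-gaps}, \ref{no-diagonal} and \ref{dis-fn-gaps} — the gap structure around $n_\star$ is a decorated finite polygon whose decorations shrink under backward iteration, so that the $\pm Q$-pullbacks taken along the branch returning toward $n_\star$ accumulate on $n_\star$.
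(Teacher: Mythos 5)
Your Steps 1 and 2 are correct reductions: it is true that the leaves of $\lam(d)$ mapping to $m$ under $\si_3$ are exactly $M_d,M'_d$, the two short edges of $Q_d$, and $d$ itself, and it is true that the lemma comes down to showing $m\in\overline{\mathcal W}$ (plus the continuity argument for pullbacks). But Step 3 has a genuine gap, and it is precisely the step you yourself flag as the ``main obstacle.'' The claim ``If $m$ is non-isolated\dots then $m\in\overline{\mathcal W}$ at once'' is circular: a non-isolated $m$ is approximated by leaves of $\lam(d)$, but those approximating leaves need not lie in $\overline{\mathcal W}$ --- showing that they do is essentially the content of the lemma. And the expansion heuristic offered for $n_\star\in\overline{\mathcal W}$ is a plan, not a proof: it is not established that a $\pm Q$-pullback branch actually returns to a one-sided neighbourhood of $n_\star$, nor that the ``decorations shrink'' along it; this accumulation statement is precisely the nontrivial fact the lemma asserts, and one cannot resolve it by invoking the conclusion.

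The paper avoids this obstacle entirely by an auxiliary-lamination trick you do not use. For non-degenerate $d$ with non-periodic majors, it takes an endpoint $c$ of $d$ (a \emph{degenerate} comajor) and considers $\lam(c)$, whose critical leaves $\pm M_c = \pm\ell$ are diagonals of $\pm Q_d$. In $\lam(c)$ density of iterated pullbacks of $\pm\ell$ holds \emph{by construction} (for degenerate $c$, $\lam(c)$ is defined as the closure of that very family), and since $\lam(d)$ has no critical leaves, no leaf $y\in\lam(d)\cap\lam(c)$ can actually \emph{be} a pullback of $\pm\ell$ --- so $y$ must be a limit of them. These pullbacks of $\pm\ell$ are diagonals of pullbacks of $\pm Q_d$, whose short edges have lengths $\to 0$ (Lemma \ref{col-quad}), so the long/medium edges --- which are iterated pullbacks of the majors of $\lam(d)$ --- accumulate on $y$. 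Lemma \ref{cmajor-comp-d}(2) is then used to cover $\lam(d)\setminus\lam(c)$. The case where $M_d$ has a periodic endpoint is handled separately (there the majors are themselves periodic, so their pullbacks already contain the whole forward orbit). In short, the comparison-with-$\lam(c)$ step is the key idea that makes the density argument work without needing the expansion estimate you propose, and it is what is missing from your proof.
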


\begin{proof}
If $d$ is degenerate, the claim follows from the definitions. Let $d$ be
non-degenerate. Then there are two cases. First, assume that $M=M_d$ has a
periodic endpoint. Then by Lemma \ref{cmajor-end-points}, the leaf $M$ is periodic, and
so is $-M$.
It follows that the iterated pullbacks of the majors of $\lam(d)$ form the same set as
 the iterated pullbacks of the majors, comajors and \emph{all their iterated forward images}
 used in the construction of the pullback lamination $\lam(d)$.
Hence, in this case, the claim follows from the definitions.

From now on assume that the endpoints of the majors $\pm M$, $\pm M'$ are
non-periodic. By Lemma \ref{cmajor-end-points}, the leaf $d$ is not periodic either,
and, moreover, no endpoint of $d$ is periodic. It follows that the minors $\pm
\si_3(M)$ have no periodic endpoints. Choose an endpoint $c$ of $d$ and
consider $\lam(c)$; the critical sets of $\lam(c)$ are leaves $\pm M_c=\pm
\ell$. Evidently, Lemma \ref{cmajor-comp-d} applies to $\lam(d)$ and
$\lam(c)$. 

Let $y\in \lam(d)\cap \lam(c)$.
Since $y$ is not eventually mapped to $\ell$ (as $y\in \lam(d)$),
 then $y$ is approximated by iterated pullbacks of $\pm \ell$. Set
$\pm Q=\pm Q_d$. By definition, pullbacks of $\pm \ell$ that converge to $y$
are diagonals of the pullbacks of $\pm Q$ corresponding to them. Denote by
$N$ a short side of $\pm Q$. The leaves $\pm \si_3(d)$ have 5 preimage-leaves
(the edges of $\pm Q$ and $\pm d$) while all other leaves have 3
preimage-leaves. In particular, every leaf that is shorter than $\si_3(d)$
has three even shorter preimages. This applies to $N$, and the length of the
$n$-th pullback of $N$ is $\frac{||N||}{3^n}$. Hence, $y$ is a limit of
iterated pullbacks of the majors of $\lam(d)$. Since, by Lemma
\ref{cmajor-comp-d}, all chords $y=\pm \si_3^n(d)$, where $n\ge 0$, are
leaves of $\lam(c)$, they all are limits of iterated pullbacks of the majors
of $\lam(d)$.

Now, let $y\in \lam(d)\sm \lam(c)$.
We may assume that $y$ is not eventually mapped to
 an edge of $\pm Q$. Then $y$ is the limit of iterated pullbacks of $\pm Q$, or, if
not, the limit of iterated pullbacks of leaves $\pm \si_3^n(d)$. In the
former case, the argument from the previous paragraph applies.
In the latter case, by the previous paragraph, the fact that leaves $\pm
\si_3^n(d)$ are limits of iterated pullbacks of the majors of $\lam(d)$
implies that iterated pullbacks of leaves $\pm \si_3^n(d)$ avoiding $\pm Q$
are also limits of iterated pullbacks of the majors of $\lam(d)$. Thus,
iterated pullbacks of the majors of $\lam(d)$ are dense among all leaves of
$\lam(d)$, except, possibly, for the leaves of $\lam(d)$ that are pullbacks
of the short sides of critical quadrilaterals $\pm Q$ of $\lam(d)$.
\end{proof}

\begin{theorem}\label{cmajor-thm}
Distinct comajors of symmetric laminations do not cross.
\end{theorem}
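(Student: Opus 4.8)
The plan is to argue by contradiction. Suppose a comajor $c$ of a symmetric lamination $\lam(c)$ crosses a comajor $d$ of a symmetric lamination $\lam(d)$ inside $\disk$. Crossings among the four chords $\{c,-c,d,-d\}$ occur in $\tau$-symmetric pairs, so it is enough to derive a contradiction from the assumption that $c$ crosses $d$; note that then both $c$ and $d$ must be non-degenerate, since a degenerate comajor is a single point of $\uc$ and crosses nothing. The first step is to reduce to \emph{degenerate} pullback laminations by passing to endpoints. Since $c$ crosses $d$, exactly one endpoint $a$ of $c$ lies in the interior of the short arc $H(d)$, and exactly one endpoint $b$ of $d$ lies in the interior of $H(c)$; moreover $a$ is not an endpoint of $d$ and $b$ is not an endpoint of $c$, since otherwise $c$ and $d$ would share a vertex and could not cross. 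The symmetric pairs $\{a,-a\}$ and $\{b,-b\}$ are degenerate, hence legal, and one has $a\prec c$, $a\prec d$, $b\prec c$, $b\prec d$.

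The main case is when $\si_3(a)$ is not periodic. Applying Lemma \ref{cmajor-comp-d} to the degenerate pair $\{a,-a\}$ and the comajor pair $\{d,-d\}$ (a legal pair by Lemma \ref{pull-back-lam1}), using that $a$ is not an endpoint of $d$, gives $d\in\lam(a)$; applying it to $\{a,-a\}$ and $\{c,-c\}$, using that $\si_3(a)$ is not periodic (which covers the fact that $a$ \emph{is} an endpoint of $c$), gives $c\in\lam(a)$. Then $c$ and $d$ are two leaves of the single lamination $\lam(a)$ that cross in $\disk$, contradicting (L1). If instead $\si_3(b)$ is not periodic, the same argument carried out inside $\lam(b)$, with the roles of $c$ and $d$ (and of $a$ and $b$) interchanged, produces the contradiction.

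There remains the case where $\si_3(a)$ and $\si_3(b)$ are both periodic. By Lemma \ref{cmajor-end-points} this forces both endpoints of $c$, and both endpoints of $d$, to be strictly preperiodic of preperiod exactly $1$, so the minors $m_c=\si_3(c)$ and $m_d=\si_3(d)$ are periodic and at least one of $M_c,M_c'$ is a periodic major (likewise for $d$). Now Lemma \ref{cmajor-comp-d} can no longer be invoked to place $c$ in $\lam(a)$, so the argument must be redone by hand. One still gets $d\in\lam(a)$ from Lemma \ref{cmajor-comp-d}; and $\lam(a)$ is built by the periodic-endpoint branch of the pullback construction (its critical leaf $M_a$ has, as an endpoint, the periodic $\si_3$-preimage of $\si_3(a)$, and $\lam(a)$ carries a collapsing quadrilateral with $M_a$ as a diagonal). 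The key claim is that $c$, whose endpoint $a$ is one of the three $\si_3$-preimages of the periodic point $\si_3(a)$, cannot cross any leaf of $\lam(a)$: this is proved exactly as in Lemma \ref{cmajor-comp-d}, by propagating non-crossing along iterated images (images of crossing chords keep crossing unless the chords emanate from the two endpoints of a critical leaf, which by Lemma \ref{cmajorL0} cannot happen here), using density of iterated pullbacks of $\pm M_a$ in $\lam(a)$ (Lemma \ref{l:major-pb}), and ruling out that $c$ is a diagonal of a gap of $\lam(a)$ via Theorem \ref{nwt-thm}, Theorem \ref{t:gaps-1}, Corollary \ref{no-diagonal}, Lemma \ref{l:adiag-1} and Lemma \ref{P-or-P}. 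Since $d$ is a leaf of $\lam(a)$, it can neither cross a leaf of $\lam(a)$ nor enter the interior of the gap of $\lam(a)$ containing $c$, and the contradiction follows.

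I expect this last case — both comajors of preperiod $1$, equivalently with periodic minors — to be the main obstacle: one loses the clean appeal to Lemma \ref{cmajor-comp-d} and must re-examine its proof while tracking the collapsing-quadrilateral and caterpillar structure of $\lam(a)$ near the shared periodic $\si_3$-preimage of $\si_3(a)$. Everything else is a straightforward application of the pullback-lamination lemmas established above (and the comajors of length $\tfrac16$, classified in Lemma \ref{l:16}, are handled directly since they belong to $\lam_1,\lam_2$ and their explicit comajor pairs visibly cross nothing).
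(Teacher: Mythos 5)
Your main-case argument is correct, but it needlessly mirrors the shape of the paper's proof while making it harder: you pass to the \emph{endpoints} $a,b$ of the comajors themselves as the degenerate pairs, and that is exactly what creates the exceptional case where $\si_3(a)$ and $\si_3(b)$ are both periodic. The paper's proof avoids this entirely with one move: since $c_1$ crosses $c_2$, the open arc $H(c_1)\cap H(c_2)$ is nonempty, and one simply picks a \emph{non-preperiodic} point $p$ in its interior. Then $p\prec c_1$, $p\prec c_2$, $p$ is not an endpoint of either comajor, and $\si_3(p)$ is not periodic, so Lemma \ref{cmajor-comp-d} applies cleanly to both pairs and puts $c_1,c_2\in\lam(p)$, giving the contradiction without any case split.

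Beyond being inefficient, your handling of the residual case (both endpoints of preperiod $1$) is a genuine gap, not a proof. You assert that $c$ "cannot cross any leaf of $\lam(a)$: this is proved exactly as in Lemma \ref{cmajor-comp-d}," but the hypothesis in Lemma \ref{cmajor-comp-d} (and, crucially, in Lemma \ref{cmajorL0} which drives it) that either the degenerate point is not an endpoint of $d$ or $\si_3$ of it is non-periodic is precisely the hypothesis you have lost, and the proofs of those lemmas use it in an essential way (without it, iterated images of $d$ can share endpoints with $\pm M_a$ and the non-crossing propagation breaks). Appealing to the conclusion of those lemmas in the excluded hypothesis regime, or saying the argument "is proved exactly as" before, does not establish it. Replace the whole reduction by a fresh non-preperiodic interior point $p\in H(c)\cap H(d)$ and the exceptional case disappears.
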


\begin{proof}
Let $\{c_1,-c_1\},$ $\{c_2,-c_2\}$ be pairs of comajors of symmetric
laminations $\lam_1$ and $\lam_2$, respectively. If $c_1$ crosses $c_2$, then
$H(c_1)\cap H(c_2)\neq\emptyset$. Choose a non-preperiodic point $p\in
H(c_1)\cap H(c_2)$. The symmetric lamination $\lam(p)$ has comajors
$\{p,-p\}$; since $p\prec c_1$ and $p\prec c_2$, then by Lemma
\ref{cmajor-comp-d} both $c_1$ and $c_2$ are leaves of $\lam(p)$, a
contradiction.
\end{proof}

The next result follows from Theorem \ref{cmajor-thm} and Theorem 2.15.
\begin{theorem} The space of all symmetric laminations is compact. The
set of all their non-degenerate comajors is a lamination.
\end{theorem}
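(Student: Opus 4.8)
The plan is to derive both statements from the compactness of the space of $\sigma_3$-invariant laminations (Theorem~\ref{t:prelam-cl}) together with the non-crossing of comajors (Theorem~\ref{cmajor-thm}).

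\emph{Compactness.} First I would show that the symmetric laminations form a \emph{closed} subset of the (compact) space of all $\sigma_3$-invariant laminations. If $\lam_n$ are symmetric and $\lam_n\to\lam$, then for any leaf $\ell\in\lam$ one can pick $\ell_n\in\lam_n$ with $\ell_n\to\ell$; since $\tau$ is an isometry of $\cdisk$ and $-\ell_n=\tau(\ell_n)\in\lam_n$, we get $-\ell_n\to-\ell$, hence $-\ell\in\lam$ (the leaf set of a lamination is closed). Thus $\lam$ is symmetric, so the space of symmetric laminations is closed in a compact space and therefore compact.

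\emph{Non-crossing of comajors.} Property (L1) for the set $\mathcal{Z}$ of all non-degenerate comajors is exactly Theorem~\ref{cmajor-thm}: distinct comajors of (possibly different) symmetric laminations do not cross. For a single comajor pair $\{c,-c\}$ this is in any case immediate, since $\|c\|\le\frac16$ forces the arcs $H(c)$ and $H(-c)=-H(c)$ to be disjoint.

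\emph{Closedness of the union.} It remains to verify (L2): a limit $c$ of non-degenerate comajors $c_n$ is either a point of $\uc$ (a degenerate leaf, automatically part of the lamination) or itself a non-degenerate comajor. Assume $c$ is non-degenerate, so $\|c\|>0$, and let $\lam_n$ be a symmetric lamination having $c_n$ as a comajor. Since there are only finitely many comajors of length $\frac16$ (Lemma~\ref{l:16}), after passing to a subsequence we may assume $\|c_n\|<\frac16$; then $c_n$ has two associated majors $M_{c_n},M'_{c_n}$ of lengths $\frac13-\|c_n\|$ (medium) and $\frac13+\|c_n\|$ (long). Using the compactness just established, pass to a further subsequence so that $\lam_n\to\lam$ for a symmetric lamination $\lam$ and so that the medium majors $M^m_n$ converge to a chord $M$. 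Then $M\in\lam$, and since $|\,\|M^m_n\|-\tfrac13\,|=\|c_n\|\to\|c\|>0$, the leaf $M$ is non-critical (and medium, in particular not a diameter). I claim $M$ is closest to criticality in $\lam$, hence a major of $\lam$: if $\lam$ had a leaf $N$ with $|\,\|N\|-\tfrac13\,|<|\,\|M\|-\tfrac13\,|$, choosing $N_n\in\lam_n$ with $N_n\to N$ would yield, for large $n$, a leaf $N_n$ of $\lam_n$ strictly closer to criticality than the major $M^m_n$, a contradiction. So $M$ is a non-critical major of $\lam$, hence $\lam$ has a comajor pair, namely the short siblings of its majors. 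Finally, the short-sibling map $\ell\mapsto\ell''$ (Definition~\ref{d:sibli}) is continuous on medium leaves, so $c=\lim_n c_n=\lim_n (M^m_n)''$ is the short sibling of $M$, i.e.\ a comajor of $\lam$. This proves $\mathcal{Z}\cup\uc$ is closed, so $\mathcal{Z}$ (together with the degenerate leaves) is a lamination.

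The main obstacle is the last step: one must be careful that the limit of the associated major leaves is still a \emph{closest-to-criticality} leaf of the limit lamination, and in particular that no critical leaf appears in $\lam$ (which would force its comajor to be degenerate and could break the identification $c=M''$). Both points are handled by the length identity $|\,\|M_{c_n}\|-\tfrac13\,|=\|c_n\|$, which keeps the limiting major a fixed positive distance from criticality, together with the lower semicontinuity of Hausdorff convergence, which forbids leaves of $\lam$ closer to criticality than $M$.
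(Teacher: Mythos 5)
Your proof is correct and follows exactly the route the paper indicates: it gives the result with no further details, stating only that it "follows from Theorem~\ref{cmajor-thm} and Theorem 2.15" (the compactness of the space of $\sigma_d$-invariant laminations). You supply the arguments that the paper leaves implicit — closedness of the symmetric laminations inside the compact space of all $\sigma_3$-invariant laminations, and the length identity $\bigl|\,\|M_{c_n}\|-\tfrac13\,\bigr|=\|c_n\|$ which forces a limit of non-degenerate comajors to be the short sibling of a non-critical major of the limit lamination.
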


Definition \ref{d:cscl} is an analogue of Thurston's definition of QML.

\begin{definition}\label{d:cscl}
The set of all chords in $\D$ which are comajors of some symmetric lamination
is a lamination called the \emph{Cubic Symmetric Comajor Lamination}, denoted
by $C_sCL$.
\end{definition}

Note that $C_sCL$ satisfies symmetric property (D3) as all comajors come in symmetric pairs.

\section{Cubic Symmetric Comajor Lamination is a q-lamination}

By Corollary \ref{cmajor-end-points}, all non-degenerate comajors are
non-periodic. We classify them as \textit{preperiodic of preperiod 1,
preperiodic of preperiod bigger than 1,} and \emph{not eventually periodic},
and consider each case separately. By Lemma \ref{cmajor-end-points} a comajor
of preperiod 1 and period $k$ corresponds to a periodic major and maps to the
major by $\si_3^k$.

\begin{Lemma}\label{pre-period-1}
A comajor leaf of preperiod 1 is disjoint from all other comajors in $C_sCL$.
\end{Lemma}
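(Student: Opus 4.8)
The plan is to argue by contradiction. Suppose a comajor $c$ of preperiod $1$ and period $k$ fails to be disjoint from another comajor $d$. By Theorem \ref{cmajor-thm} distinct comajors never cross, so $c$ and $d$ must share an endpoint $v$; note $v$ is preperiodic of preperiod $1$. By Lemma \ref{cmajor-end-points}, $d$ is then non-degenerate (the case $d=\{v\}$ is set aside below), of preperiod $1$, and of period $k$ as well (the period of $v$ is intrinsic); hence $M_c,M_d$ are periodic, $\si_3^k(c)=M_c$, $\si_3^k(d)=M_d$, and $\|M_c\|,\|M_d\|\ge\frac14$ by Lemma \ref{l:closest}, i.e., both are medium leaves. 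Also $M_c\ne M_d$: if $M_c=M_d$ then, since $\si_3^{k-1}$ is injective on the $k$-cycle containing $\si_3(c)$ and $\si_3(d)$, we get $\si_3(c)=\si_3(d)$, making $c$ and $d$ distinct short members of one sibling collection; but the members of a sibling collection are pairwise disjoint closed chords, so $c$ and $d$ cannot share $v$. Applying $\si_3^k$, we obtain two distinct medium periodic leaves $M_c\ne M_d$ of period $k$ sharing the periodic endpoint $q:=\si_3^k(v)$, and it remains to rule this out.

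Next I would place $c$ and $d$ (hence $M_c,M_d$) in one symmetric lamination. In the main case the endpoints of $c$ and $d$ other than $v$ lie on the same side of $v$, so $H(c)\cap H(d)$ contains a non-preperiodic point $p$, with $p\prec c$, $p\prec d$, and $p$ not an endpoint of $c$ or $d$. Since $\si_3(p)$ is non-periodic, Lemma \ref{cmajor-comp-d} gives $c,d\in\lam(p)=:\Lambda$, and then $M_c=\si_3^k(c)$, $M_d=\si_3^k(d)$ and all their siblings lie in $\Lambda$ by forward and backward invariance. The only critical sets of $\Lambda$ are the critical leaves $\pm M_p$, whose endpoints are non-preperiodic; hence $\Lambda$ has no critical gaps, every periodic gap of $\Lambda$ has degree one, and each of $M_c$, $M_d$ (being the major of the symmetric lamination from whose comajor it comes) is still closest to criticality within its own $\si_3$-orbit.

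Now for the contradiction. Being periodic leaves of $\Lambda$, $M_c$ and $M_d$ are edges of periodic gaps $G_c,G_d$ of degree one, so by Theorem \ref{t:gaps-1} each of $G_c,G_d$ is a polygon, a Siegel gap, or a caterpillar gap. The caterpillar option is impossible, since its gap-orbit would have to contain a critical edge with a periodic endpoint, whereas $\pm M_p$ have non-preperiodic endpoints; the Siegel option is impossible, since the periodic endpoints of $M_c$ (resp. $M_d$) would be carried by the semiconjugacy of Theorem \ref{t:gaps-1}(2) to periodic points of an irrational rotation. So $G_c,G_d$ are periodic polygons. If the endpoints of $M_c$ and $M_d$ other than $q$ lie on opposite sides of $q$, then a direct computation (using $\|M_c\|,\|M_d\|\ge\frac14$) shows that $M_d$ crosses the medium/long sibling $M_c'$ of $M_c$, which is impossible since $M_d,M_c'\in\Lambda$. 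If they lie on the same side of $q$, then $\|M_c\|\ne\|M_d\|$ (otherwise $M_c=M_d$); and if $G_c=G_d$ then Lemma \ref{P-or-P} carries both $M_c$ and $M_d$ into the orbit of the major of the orbit of $G_c$ (or of its negative), forcing $\|M_c\|=\|M_d\|$, a contradiction. Hence $G_c\ne G_d$ are distinct periodic polygons sharing the vertex $q$, contradicting Corollary \ref{dis-fn-gaps}.

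Two configurations were set aside: (i) $d=\{v\}$ degenerate, where now $M_d$ is a critical leaf of $\lam(d)$ with $\si_3(M_d)=\si_3(v)$ periodic, which is handled by the same reduction; and (ii) the ``$\mathrm V$-at-$v$'' configuration, where the endpoints of $c$ and $d$ other than $v$ lie on opposite sides of $v$, so no non-preperiodic point lies under both and Lemma \ref{cmajor-comp-d} is unavailable (it requires the inner comajor not to be an endpoint of the outer, or to have non-periodic image). I expect (ii) to be the main obstacle: there one has to work directly with the periodic leaves $\si_3(c),\si_3(d)$ meeting at the periodic point $\si_3(v)$ and run a concatenation/orbit argument in the spirit of Lemma \ref{cmajorL0} to force a contradiction, and making that argument airtight is the crux of the proof.
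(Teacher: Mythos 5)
Your reduction runs parallel to the paper's up to the key fork: by Theorem~\ref{cmajor-thm} overlapping comajors share an endpoint, by Lemma~\ref{cmajor-end-points} they then have the same preperiod~$1$ and period~$k$, and the periodic sibling $\si_3^k(c)$ is the major $M_c$ of $\lam(c)$. But there is a genuine gap, and you flag it yourself: you do not prove case~(ii), the ``V-at-$v$'' configuration where $c$ and $d$ share $v$ and their other endpoints lie on opposite sides. You call it ``the crux'' and leave it as a conjecture. Without that case the lemma is not proved, since a priori two preperiod-$1$ comajors could meet at $v$ in exactly that $\mathrm V$ shape without either lying under the other. The paper dispatches this case \emph{before} the nested case, by an orientation argument: writing $c=\ol{xa}$, $d=\ol{ay}$ with $x<a<y$, the periodic majors $M=\si_3^k(c)=\ol{AX}$ and $N=\si_3^k(d)=\ol{AY}$ must, because majors are long/medium, have $X,A,Y$ in the \emph{reversed} cyclic order; meanwhile the convex hull of $\{\si_3^j(x),\si_3^j(a),\si_3^j(y)\}$ stays disjoint from the critical chords through $A$ (since images of $c$ avoid $\sh(M)$ and images of $d$ avoid $\sh(N)$), so the cyclic order of the triple is preserved by every iterate, which is a contradiction. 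That is the missing ingredient, and it does not require ``a concatenation argument in the spirit of Lemma~\ref{cmajorL0}'' but rather tracking orientation of a planar triangle.

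Your treatment of the nested case also diverges from the paper's and has a soft spot. The paper places $M$ and $N$ as edges of a single periodic gap $G$ of $\lam(x)$ and invokes Lemma~\ref{P-or-P} directly. You instead introduce gaps $G_c\ne G_d$ and appeal to Corollary~\ref{dis-fn-gaps}; to rule out $G_c=G_d$ you argue that Lemma~\ref{P-or-P} would ``force $\|M_c\|=\|M_d\|$.'' That inference is not immediate: being closest to criticality pins down the value of $\bigl|\|M\|-\tfrac13\bigr|$, not $\|M\|$ itself, so you could a priori have $\|M_c\|=\tfrac23-\|M_d\|$. You would need an extra line (or simply follow the paper and argue as it does with a single gap $G$ and the non--fixed-return property) to close that. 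Still, the decisive missing piece is case~(ii).
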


\begin{proof}
By Theorem \ref{cmajor-thm}, intersecting comajors share an endpoint. Then,
by Lemma \ref{cmajor-end-points}, they have the same preperiod and period.
Thus, a comajor of preperiod 1 can only share an endpoint with a comajor of
the same kind. Assume that there exist distinct compajor pairs $\{c,-c\},$
$\{d,-d\}$ of preperiod 1 and period $k$ such that $c$ and $d$ share an
endpoint $a$. Since $\si_3(c)$ is a periodic leaf, there is a periodic leaf
that maps to $\si_3(c)$. By Lemma \ref{cmajor-end-points}, this periodic leaf
is a major of $\lam(c)$.

We claim that $c$ is under $d$ or $d$ is under $c$. Indeed, otherwise
$c=\ol{xa}$ and $d=\ol{ay}$ are located next to each other. Let $x<a<y$ and,
hence, $\si_3(x)<\si_3(a)<\si_3(y)$. Consider the periodic majors $M$ of
$\lam(c)$ and $N$ of $\lam(d)$. Evidently, they share an endpoint $A$ (with
$\si_3(A)=\si_3(a)$) and have other endpoints $X$ (with $\si_3(X)=\si_3(x)$) and
$Y$ (with $\si_3(Y)=\si_3(y)$) so that $M=\ol{AX}$ and $N=\ol{AY}$. Since majors
are long/medium leaves, it is easy to see that $X>A>Y$ (the orientation
changes). We claim that this is impossible. Indeed, the short strip $C(M)$
and the short strip $C(N)$ have a common diagonal $\ol{AZ}$ where  $Z$ is the
remaining sibling point of $a$ and $A$.
Since the iterated images of $c$ do not enter the
interior of $\sh(M)$ and images of $d$ do not enter $\sh(N)$,  for any
$j$, the convex hull of points $\si_3^j(x), \si_3^j(a), \si_3^j(y)$ is
disjoint from critical chords $\ol{AZ}, -\ol{AZ}$. Hence, the orientation of
this triple of points must not change, which contradicts the fact that $\si_3^k(x)=X>
\si_3^k(a)=A>\si_3^k(y)=Y$ while $x<a<y$.
This contradiction shows that we may assume that $c$ is
located under $d$.

Let $x$ be a non-preperiodic point under $c$.
By Lemma \ref{cmajor-comp-d}, the chords
$M$ and $N$ are leaves of $\lam(x)$. Hence $\lam(x)$ has a gap $G$ such that
$M$ and $N$ are edges of $G$. Since $M$ and $N$ are periodic, $G$ is periodic
too. By Proposition \ref{P-or-P}, the first return map on $G$ is not a fixed
return map. This implies that $N$ enters the interior of $\sh(N)$, a
contradiction.
\end{proof}

A leaf of a lamination is a \emph{two sided limit leaf} if it is not on the
boundary of a gap, i.e., if it is a limit of other leaves from both sides
(e.g., by Lemma~\ref{cmajor-end-points}, all non-preperiodic comajors are two
sided limit leaves). A lamination can have periodic or preperiodic two sided
limit leaves. We prove that a two sided limit comajor $c$ of $\lam(c)$ is a
two sided limit leaf in the {Cubic
Symmetric Comajor Lamination} $C_sCL$, too. 

\begin{Lemma}\label{appr-under}
Let $c\in C_sCL$ be a non-degenerate comajor. If $\ell\in \lam(c),$
$\ell\prec c$ and $\|\ell\|>\frac{\|c\|}{3}$, then $\ell\in C_sCL$. In
particular, if $c_i\in \lam(c),$ $c_i\prec c$ and $c_i \rightarrow c$, then
$c_n\in C_sCL$ for sufficiently large $n$.
\end{Lemma}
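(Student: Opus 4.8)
The plan is to prove that $\{\ell,-\ell\}$ is a \emph{legal pair} in the sense of Definition~\ref{d:legal}; Lemma~\ref{pull-back-lam1} then immediately gives that $\{\ell,-\ell\}$ is a comajor pair, i.e. $\ell\in C_sCL$. Write $s=\|c\|$, $s'=\|\ell\|$, so $\tfrac s3<s'<s$. I would first record two preliminaries. Since $c$ is a non-degenerate comajor, by Lemma~\ref{l:16} we have $s<\tfrac16$, so the minor $\si_3(c)$ has length $3s$; as $\si_3(c)\in\lam(c)$ cannot be closer to criticality than the major $M_c$ (of length $\tfrac13+s$), we get $|\tfrac13-3s|\ge s$, hence $s\le\tfrac1{12}$ and $3s<\tfrac13$. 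Also, since $\ell$ is short and $M_c$ is the closest--to--criticality leaf of $\lam(c)$, the sibling collection of $\ell$ inside $\lam(c)$ cannot be of type $\mathrm{(sml)}$ (that would put $M_\ell$, of length $\tfrac13+s'<\tfrac13+s$, into $\lam(c)$), so it is of type $\mathrm{(sss)}$: the two chords $\ell_1,\ell_2$ obtained from $\ell$ by rotating by $\tfrac13$ and $\tfrac23$ are leaves of $\lam(c)$, while $M_\ell,M'_\ell$ are not. Condition (a) of legality is then immediate: $\ell,-\ell\in\lam(c)$ and $\lam(c)$ is a lamination, so all iterated forward images of $\pm\ell$ are leaves or points of $\lam(c)$, hence pairwise non-crossing.

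The work is in condition (b). First I would establish the geometric nesting $\sh(M_\ell)\subseteq\sh(M_c)$: placing the eight endpoints of $M_c$, $M_\ell$, $M'_\ell$, $M'_c$ in cyclic order one checks that these four chords are successively nested, so $C(M_\ell)\subseteq C(M_c)$ and, applying $\tau$, $-C(M_\ell)\subseteq -C(M_c)$. Now assume for contradiction that some $\si_3^{j}(\ell)$ with $j\ge1$ meets $\operatorname{int}\sh(M_\ell)$, and take $j$ minimal. By Lemma~\ref{short-leaves}, $\|\si_3^{j}(\ell)\|\ge\min(\|\ell\|,\|\si_3(c)\|)=s'$, and since $\si_3^{j}(\ell)\in\lam(c)$ and $M_c$ is a major of $\lam(c)$ we have $|\tfrac13-\|\si_3^{j}(\ell)\||\ge s>s'$. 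If $\si_3^{j}(\ell)$ crossed none of the four edges $\pm M_\ell,\pm M'_\ell$ of $\sh(M_\ell)$ it would lie inside one of the strips $C(\pm M_\ell)$ (of width $s'$) with endpoints on its two boundary arcs, forcing either $\|\si_3^{j}(\ell)\|<s'$ or $|\tfrac13-\|\si_3^{j}(\ell)\||<s'$ — a contradiction. So $\si_3^{j}(\ell)$ crosses one of $\pm M_\ell,\pm M'_\ell$.

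The heart of the argument is the ensuing crossing analysis. Assume (the remaining cases follow by applying $\tau$ and using $-\ell\in\lam(c)$) that $\si_3^{j}(\ell)$ crosses $M_\ell$ or $M'_\ell$. Both $M_\ell$ and $M'_\ell$ cross the short edges $e_1,e_2$ of the critical quadrilateral $Q_c$, while $\si_3^{j}(\ell)$, being a leaf of $\lam(c)$, is disjoint from $e_1,e_2$, from the long/medium edges $M_c,M'_c$ of $Q_c$, and from $\ell_1,\ell_2$. Tracking which arcs all these chords subtend, the only way $\si_3^{j}(\ell)$ can cross $M_\ell$ (or $M'_\ell$) under these disjointness constraints is for it to lie in the lune cut off by, say, $e_1$ on the side away from $Q_c$, with its endpoints strictly separated by the sub-arc subtended by $\ell_1$, and with $\|\si_3^{j}(\ell)\|<s$; in particular $\ell_1\prec\si_3^{j}(\ell)\prec e_1$. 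Since $\si_3$ is injective and cyclic-order preserving on that lune (its circle arc has length $s<\tfrac13$) and $\si_3(\ell_1)=\si_3(\ell)$, $\si_3(e_1)=\si_3(c)$, applying $\si_3$ gives $\si_3(\ell)\prec\si_3^{j+1}(\ell)\prec\si_3(c)$, so $\nu:=\si_3(\ell)$ is a leaf of $\lam(c)$, lying strictly under the minor, with $\nu\prec\si_3^{j}(\nu)$. Iterating $\si_3^{j}$ then produces a strictly $\prec$-increasing chain $\nu\prec\si_3^{j}(\nu)\prec\si_3^{2j}(\nu)\prec\cdots$ of leaves of $\lam(c)$; their lengths strictly increase to a fixed point of $\Gamma$, which in $(0,\tfrac13)$ can only be $\tfrac14$, so the chain converges to an $\si_3$-fixed leaf of length $\tfrac14$ (one of $\ol{\frac18\frac38},\ol{\frac58\frac78}$ and their rotates by $\tau$), forcing the endpoints of the forward images of $\ell$ to converge to a $\si_3$-periodic orbit — impossible since $\si_3$ is expanding. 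This contradiction proves (b), so $\{\ell,-\ell\}$ is legal and $\ell\in C_sCL$. The ``in particular'' statement follows at once: if $c_i\in\lam(c)$, $c_i\prec c$, $c_i\to c$, then $\|c_i\|\to\|c\|>\tfrac{\|c\|}3$, so $\|c_n\|>\tfrac{\|c\|}3$ for large $n$.

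I expect the main obstacle to be condition (b), in two places: the book-keeping in the crossing analysis that pins down exactly how a forward image of $\ell$ can meet $\operatorname{int}\sh(M_\ell)$ given that it is a leaf of $\lam(c)$, and making the final iteration of $\si_3^{j}$ rigorous — in particular keeping $\si_3$ cyclic-order preserving along the orbit when an iterated image of $\si_3(c)$ happens to be long, which I would handle separately using the Short Strip Lemma~\ref{l:str} together with condition (b) of legality for the pair $\{c,-c\}$.
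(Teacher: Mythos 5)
Your overall strategy — show that $\{\ell,-\ell\}$ is a legal pair and invoke Lemma~\ref{pull-back-lam1} — coincides with the paper's. Condition (a), the preliminary arithmetic, the observation that the sibling collection of $\ell$ in $\lam(c)$ has type (sss), and the nesting $\sh(M_\ell)\subset\sh(M_c)$ are all fine and consistent with the paper.

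The problems are in the verification of condition (b), and they stem from a single weak estimate. You apply Lemma~\ref{short-leaves} to $\ell$ itself and get only $\|\si_3^j(\ell)\|\ge\min(s',3s)=s'$. This is too weak to finish, so you are forced into a crossing analysis that correctly pins down $\|\si_3^j(\ell)\|<s$ in the problematic case, and then into an iteration-of-$\si_3^j$ argument intended to rule that case out. That last step contains a genuine gap, which you yourself flag: from $\nu\prec\si_3^j(\nu)\prec m$ you want to propagate to $\si_3^{j}(\nu)\prec\si_3^{2j}(\nu)\prec\cdots$, but applying $\si_3^j$ does not in general preserve the relation $\prec$, because the arcs involved are far too long for $\si_3^j$ to be injective or order-preserving on them; the strip/legality remarks you invoke do not repair this, and the chain of "nested" leaves is not actually produced. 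As written, the contradiction in the crossing case is not established.

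Moreover, the whole detour is unnecessary. For $j\ge 1$ the chord $\si_3^j(\ell)$ is a forward image of $\si_3(\ell)$, and $\|\si_3(\ell)\|=3s'$ (as $\ell$ is short); applying Lemma~\ref{short-leaves} to $\si_3(\ell)$ instead of $\ell$ gives $\|\si_3^j(\ell)\|\ge\min(3s',3s)=3s' > s=\|c\|$. That is precisely what the paper's step (b) records. Combined with the fact that $\si_3^j(\ell)$, being a leaf of $\lam(c)$, cannot cross $\pm M_c$, $\pm M'_c$ and cannot be closer to criticality than $M_c$, one sees directly that $\si_3^j(\ell)$ cannot lie inside $\sh(M_c)$ at all (a chord in $C(M_c)$ with one endpoint on each bounding arc would be closer to criticality than $M_c$; with both endpoints on one arc it would have length $\le\|c\|$), and since $\sh(M_\ell)\subset\sh(M_c)$ this already finishes (b) without any crossing analysis or iteration. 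Replacing your estimate $\ge s'$ by $\ge 3s'$ therefore closes the gap and collapses the proof to the paper's short argument.
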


\begin{proof}
Choose $\ell\in \lam(c)$ with $\|\ell\|>\frac{\|c\|}{3}$.
We claim that the
leaves $\{\ell,-\ell\}$ form a legal pair (see Definition \ref{d:legal}).

(a) Clearly, no forward images of $\ell$ and $-\ell$ cross.

(b) Let $m$ be a minor of $\lam(c)$. Since $\|\ell\|<\|c\|$, then
$\|\sigma_3(\ell)\|=3\|\ell\|<\|\si_3(c)\|=\|m\|$. By Lemma
\ref{short-leaves}, no forward image of $\sigma_3(\ell)$ is shorter than
$3\|\ell\|$. 

(c) The long and medium sibling chords $M_{\ell}$ and $M_{\ell}'$ of $\ell$
are located inside the short strips $C(M)$, $C(-M)$ of a major $M=M_c$ of
$\lam(c)$. An iterated image $\tell$ of $\ell$ cannot cross majors of
$\lam(c)$. Hence $\tell$ is either outside of $\sh(M)$ or inside it. We claim
that $\tell$ is outside. Indeed, if $\tell$ is inside, say, $C(M)$, it cannot
be closer to criticality than $M$. On the other hand, $\|\tell\|\ge 3\|\ell\|
> \|c\|$. This implies that $\tell$ cannot be inside $\sh(M)=C(M)\cup C(-M)$.
Hence leaves from the forward orbit of $\ell$ do not cross chords $\pm
M_{\ell}$ and $\pm M'_{\ell}$.

Thus, $\{\ell, -\ell\}$ is a legal pair and so $\ell \in C_sCL$ as desired.
\end{proof}

Consider now comajors approximated from the other side.

\begin{Lemma}\label{simple-pull-back-lam}
Let $\lam$ be a symmetric lamination with comajors $\{c,-c\}$. Suppose there
is a short leaf $\ell_s\in \lam$ satisfying the conditions below:
\begin{enumerate}
    \item[(i)] $c\prec\ell_s$,
    \item[(ii)] the leaf $\ell_m=\sigma_3(\ell_s)$ never maps under itself
        or under $-\ell_m$. 
\end{enumerate}
Then there is a symmetric lamination $\lam(\ell_s)$ with comajors
$\{\ell_s,-\ell_s\}$.
\end{Lemma}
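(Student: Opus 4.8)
The plan is to verify that the symmetric pair $\{\ell_s,-\ell_s\}$ is a \emph{legal pair} in the sense of Definition~\ref{d:legal}; once this is done, the pullback construction produces a symmetric invariant lamination $\lam(\ell_s)$ and Lemma~\ref{pull-back-lam1} gives that $\{\ell_s,-\ell_s\}$ is its comajor pair, which is exactly the assertion. Since $\ell_s$ is short, $\|\ell_s\|<\tfrac16$, so $\lam(\ell_s)$ is none of the exceptional laminations of Lemma~\ref{l:16} and the non-degenerate case of the construction applies.

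First I would fix the geometry. Writing $\ell_s=\ol{ab}$ with $H(\ell_s)=(a,b)$ and $s=\|\ell_s\|$, the chords $M_{\ell_s}$, $M'_{\ell_s}$ are the long and medium siblings of $\ell_s$ (of lengths $\tfrac13+s$ and $\tfrac13-s$), $\si_3(M_{\ell_s})=\si_3(M'_{\ell_s})=\ell_m$, and $\sh(M_{\ell_s})=C(M_{\ell_s})\cup(-C(M_{\ell_s}))$ has width $s$. The key computation is that $\si_3$ maps $\ol{C(M_{\ell_s})}$ onto $R(\ell_m)$ and $\ol{-C(M_{\ell_s})}$ onto $R(-\ell_m)$: each half of $C(M_{\ell_s})$ folds, along the critical chord it contains, onto the region cut off by $\ell_m$, respectively $-\ell_m$. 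Next, from $c\prec\ell_s$ I would deduce $\|c\|\le s$ and, assuming $c$ non-degenerate, that the long/medium siblings $M_c$, $M'_c$ of $c$ (two of the four majors of $\lam$) lie inside $\ol{C(M_{\ell_s})}$. Consequently, were $\ell_s$ in a sibling collection of type \textbf{(sss)} inside $\lam$, the major $M_c\in\lam$ would have to cross one of the two short siblings of $\ell_s$, contradicting that distinct leaves of $\lam$ do not cross. Hence the sibling collection of $\ell_s$ in $\lam$ is of type \textbf{(sml)}, so $M_{\ell_s}$, $M'_{\ell_s}$ — and by symmetry $-M_{\ell_s}$, $-M'_{\ell_s}$ — are leaves of $\lam$.

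Now I would check legality. Condition (a) is immediate: $\ell_s,-\ell_s\in\lam$, so every iterated forward image of $\pm\ell_s$ is a (possibly degenerate) leaf of $\lam$, and distinct leaves of a lamination do not cross. For condition (b), suppose $\ell_k:=\si_3^k(\ell_s)$ meets $\operatorname{int}\sh(M_{\ell_s})$ with $k\ge1$ minimal. Since $\ell_k\in\lam$ and the four boundary chords $\pm M_{\ell_s}$, $\pm M'_{\ell_s}$ of $\sh(M_{\ell_s})$ are leaves of $\lam$, $\ell_k$ crosses none of them; a chord meeting the interior of the strip without crossing its four bounding chords must be contained in $\ol{C(M_{\ell_s})}$ or in $\ol{-C(M_{\ell_s})}$ (the Short Strip Lemma~\ref{l:str}(1), applied to $M_{\ell_s}$, whose positive iterates coincide with those of $\ell_s$, gives the auxiliary bound $\|\ell_k\|>s$). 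Applying $\si_3$ then yields $\si_3^{k+1}(\ell_s)\subset R(\ell_m)$ or $\subset R(-\ell_m)$, i.e.\ $\si_3^{k}(\ell_m)=\si_3^{k+1}(\ell_s)\prec\ell_m$ or $\prec -\ell_m$ with $k\ge1$ — contradicting hypothesis (ii). Hence no forward image of $\ell_s$ meets $\operatorname{int}\sh(M_{\ell_s})$, so $\{\ell_s,-\ell_s\}$ is a legal pair, and the non-degenerate pullback construction produces the symmetric invariant lamination $\lam(\ell_s)$ with comajor pair $\{\ell_s,-\ell_s\}$ by Lemma~\ref{pull-back-lam1}.

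The main obstacle is the step of the second paragraph — showing that $\ell_s$ lies in an \textbf{(sml)} sibling collection of $\lam$, which is exactly what makes the four edges of $\sh(M_{\ell_s})$ leaves of $\lam$ and thereby reduces (b) to the clean folding argument; this is where hypothesis (i) is indispensable, via the location of the majors $M_c$, $M'_c$ of $\lam$. The one remaining delicate point is the case in which $c$ is degenerate and an endpoint of $\ell_s$: there $M_c$, $-M_c$ are critical leaves of $\lam$, the crossing argument needs to be replaced by a direct analysis, and one again uses hypothesis (ii) (the degenerate minor $m=\si_3(c)$ is an endpoint of $\ell_m$, and ``$\ell_m$ never maps under itself or under $-\ell_m$'' forbids any image of $\ell_s$ from being swallowed by $\sh(M_{\ell_s})$) to conclude.
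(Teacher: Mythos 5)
Your proposal follows essentially the same route as the paper's proof: you verify the two clauses of legality (Definition~\ref{d:legal}) and then invoke the pullback construction together with Lemma~\ref{pull-back-lam1}. Clause (a) is immediate in both treatments because the $\si_3$-iterates of $\pm\ell_s$ are leaves of $\lam$. For clause (b), the paper and you use the same two-step reduction: first show that $\ell_s$ cannot lie in an \textbf{(sss)} sibling triple of $\lam$ (because a short sibling, or its $\tau$-image, would cross the majors $\pm M_c$, $\pm M'_c$ of $\lam$, which are forced inside $\sh(M_{\ell_s})$ by $c\prec\ell_s$), so that $\pm M_{\ell_s},\pm M'_{\ell_s}\in\lam$ and hence no iterate of $\ell_s$ crosses the edges of $\sh(M_{\ell_s})$; and second, rule out an iterate landing in the interior of the short strips, since such an iterate would then be swallowed by one of $\ol{C(M_{\ell_s})}$, $\ol{-C(M_{\ell_s})}$, and its $\si_3$-image would be $\si_3^{k}(\ell_m)\prec\ell_m$ or $\prec-\ell_m$, contradicting hypothesis (ii). The only cosmetic difference is that the paper packages the second step in terms of lengths (``the first time $\|\si_3^k(\ell_s)\|<3\|\ell_s\|$'') while you phrase it spatially (``the first time $\si_3^k(\ell_s)$ meets the interior of $\sh(M_{\ell_s})$''); via the length function $\Gamma$ these formulations are interchangeable, and your folding observation that $\si_3$ carries $\ol{C(M_{\ell_s})}$ onto $R(\ell_m)$ is exactly the geometric content the paper's length computation encodes. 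You are, if anything, slightly more careful than the paper in flagging the remaining delicate case ($c$ degenerate and an endpoint of $\ell_s$), which the paper glosses over at the same point.
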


\begin{proof}
(a) Since $\ell_s, -\ell_s\in \lam$, all forward images of $\ell_s, -\ell_s$
do not cross.

(b) The siblings of $\ell_s$ in $\lam$ are either both short or one long and
one medium leaf. Since $\ell_s\succ c$, a short sibling of $\ell_s$ (or its
image under the rotation by $180$ degrees) would intersect the major leaves
of $\lam$. Thus, the siblings of $\ell_s$ (and their rotations by $180$
degrees) in $\lam$ are long and medium. Hence forward images of $\ell_s$ do
not cross the long and medium siblings of $\ell_s$ (or their rotations by
$180$ degrees).

(c) Assume that, for some $k>0$, we have $\|\sigma_3^k(\ell_s)\|<3\|\ell_s\|$ for the
first time. This implies that $\sigma_3^{k-1}(\ell_s)$ is closer to
criticality than the long and medium sibling leaves of $\ell_s$. Hence the
leaf $\sigma_3^k(\ell_s)=\si_3^{k-1}(\ell_m)$ is under $\ell_m$ or $-\ell_m$
contradicting the assumptions.

By definition, $\{\ell_s,-\ell_s\}$ is a legal pair, and by Lemma
\ref{pull-back-lam1}  there exists a symmetric lamination $\lam(\ell_s)$ with
$\{\ell_s,-\ell_s\}$ as a comajor pair.
\end{proof}

\begin{definition}\label{d:symmetr}
Let $\ell$ be a leaf of a symmetric lamination $\lam$ and $k>0$ be such that
$\sigma_3^k(\ell)\neq\ell$ (in particular, the leaf $\ell$ is not a
diameter). If the leaf $\sigma_3^k(\ell)$ is under $\ell$, then we say that
the leaf $\ell$ \emph{moves in} by   $\sigma_3^k$; if $\sigma_3^k(\ell)$ is
not under $\ell$, then we say that the leaf $\ell$ \emph{moves out} by
$\sigma_3^k$. If two leaves $\ell$ and $\hell$ with $\ell \prec \hell$ of the
same lamination both move in or both move out by the map $\sigma_3^k$, then
we say that the leaves
 \emph{move in the same direction}. If one  of the leaves
$\{\ell,\hell\}$ moves in and the other moves out, then we say that the
leaves \emph{move in the  opposite directions}. There are two ways of
moving in the opposite directions: if  $\ell$ moves out and $\hell$ moves in,
we say they \textit{move towards each other}; if $\ell$ moves in and $\hell$
moves out, we say that they  \textit{move away from each other}.
\end{definition}

Since $\lam$ is a symmetric lamination, then the maps $\si_3$ and $-\si_3$
both map $\lam$ onto itself. 

\begin{Lemma}\label{move-twrds}
Let $\hell\neq\ell$ be non-periodic leaves of a symmetric lamination $\lam$
with $\hell\succ\ell$. Given an integer $k>0$,  let
$h:\uc\to \uc$ be either the map $\si_3^k$ or the map $-\si_3^k$. Suppose
that the leaves $\ell$ and $\hell$ move towards each other by the map $h$ and
neither the leaves $\ell$ and $\hell$, nor any leaf separating them, can
eventually map into a leaf (including degenerate) with both endpoints in one
of the boundary arcs of the strip $\mathcal{S}(\ell,\hell)$. Then there
exists a $\si_3$-periodic leaf $y\in\lam$ that separates $\ell$ and $\hell$.
\end{Lemma}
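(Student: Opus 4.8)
The plan is to run a pullback/compactness argument producing a leaf fixed (possibly after flipping sides) by the return dynamics, and then upgrade it to a genuine $\si_3$-periodic leaf. First I would consider the nested sequence of strips obtained by iterating $h$: since $\ell$ and $\hell$ move towards each other, $h(\ell)$ lies outside $R(\ell)$ while $h(\hell)$ lies inside $R(\hell)$, so the leaf $h(\hell)$ separates $\ell$ (or a translate of it) from $\hell$, and similarly the preimage picture places a separating leaf on the other side. The key observation is that the region $\mathcal{S}(\ell,\hell)$, together with the forbidden-collapse hypothesis (no leaf between $\ell$ and $\hell$ — nor $\ell,\hell$ themselves — ever maps to a leaf with both endpoints in one boundary arc of the strip), forces the family of leaves of $\lam$ separating $\ell$ from $\hell$ to behave like a "trapped" family under $h$: the image of a separating leaf is again a separating leaf (it cannot escape past $\ell$ or $\hell$ because those move the wrong way, and it cannot degenerate or collapse into a boundary arc by hypothesis). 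Thus $h$ induces a self-map of the compact set of separating leaves of $\lam$ (including $\ell,\hell$ as the extreme "boundary" of this family in the $\prec$-order).

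Next I would extract periodicity. Order the separating leaves by $\prec$; this is a linear order by the remark following Definition \ref{under-dfn}. The map $h$ is monotone with respect to $\prec$ on this family (it preserves the strip and preserves orientation, so it preserves the nesting order of chords trapped between $\ell$ and $\hell$), and it maps $\ell$ strictly "up" toward $\hell$ and $\hell$ strictly "down" toward $\ell$. By a standard fixed-point argument for monotone self-maps of a compact linearly ordered space — take the supremum of $\{x : x \preceq h(x)\}$, which is nonempty since $\ell$ lies in it — we obtain a leaf $y\in\lam$ separating $\ell$ and $\hell$ with $h(y)=y$, i.e. $\si_3^k(y)=y$ or $-\si_3^k(y)=y$. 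Here I must be a little careful that the supremum is attained as an actual leaf of $\lam$ and is not a degenerate point: it cannot be degenerate because a degenerate "leaf" $p$ with $h(p)=p$ would be a $\si_3$-periodic point, and then the leaves of $\lam$ converging to it from the two sides would give leaves that eventually map with both endpoints in a single small arc — the very collapse excluded by hypothesis (applied to leaves near $y$, which separate $\ell$ and $\hell$). Also $y \ne \ell$ and $y\ne\hell$ because $\ell,\hell$ are assumed non-periodic, so the fixed leaf lies strictly between them.

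Finally I would pass from "$h$-fixed" to "$\si_3$-periodic". If $h=\si_3^k$ then $y$ is already $\si_3$-periodic of period dividing $k$, and we are done. If $h=-\si_3^k$, then $\si_3^k(y)=-y=\tau(y)$, hence $\si_3^{2k}(y)=\tau^2(y)=y$, so $y$ is $\si_3$-periodic of period dividing $2k$; since $\lam$ is symmetric, $-y=\si_3^k(y)\in\lam$ as well, and $y$ is the desired $\si_3$-periodic leaf separating $\ell$ and $\hell$. The main obstacle I expect is the careful bookkeeping in the second paragraph: verifying that $h$ genuinely restricts to a monotone self-map of the family of separating leaves — in particular that a separating leaf cannot have an image escaping the strip or collapsing — which is exactly where the "move towards each other" hypothesis and the no-collapse-into-a-boundary-arc hypothesis are both used, and ensuring the extremal fixed leaf is non-degenerate. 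Everything else (the order-theoretic fixed point, the flip trick for $-\si_3^k$) is routine.
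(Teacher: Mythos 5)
Your overall plan --- take the extremal leaf among those that move weakly up under $h$, conclude it is $h$-fixed, then pass from $h$-fixed to $\si_3$-periodic --- is the same as the paper's, and your non-degeneracy remark and the $h=-\si_3^k\Rightarrow h^2=\si_3^{2k}$ reduction are fine. The genuine gap is the unverified machinery you insert in the middle. You assert (i) that $h$ maps the family of leaves separating $\ell$ from $\hell$ into itself, and (ii) that $h$ is monotone for $\prec$ on that family, citing ``it preserves the strip and preserves orientation.'' Neither is warranted. For (i), the no-collapse hypothesis only forbids $h(u)$ from having both endpoints in one boundary arc of $\mathcal{S}(\ell,\hell)$; it does not forbid $h(u)$ from escaping the strip altogether (into $R(\ell)$, or past $\hell$), and ``$\ell,\hell$ move the wrong way'' does not confine a third leaf $u$. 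For (ii), $\si_3^k$ (or $-\si_3^k$) on chords is not $\prec$-order-preserving in general: its restriction to a boundary arc of the strip need not be injective, so two nested chords can have images that fail to nest or leave the strip. Without (i) and (ii) the Knaster--Tarski step collapses: from $y\prec h(y)$ you cannot infer $h(y)\preceq h(h(y))$, nor that $h(y)$ is in your linearly ordered family, so $y=\sup\{x:x\preceq h(x)\}$ need not be fixed. (Also, $\ell$ moving out means $h(\ell)\not\subset R(\ell)$, which by itself does not give $\ell\preceq h(\ell)$, so nonemptiness of $\{x:x\preceq h(x)\}$ needs a word too.)

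The paper sidesteps all of this. It puts $T=\{\ell\}\cup\{u\text{ separating }\ell,\hell\ :\ h(u)=u \text{ or } u \text{ separates } \ell \text{ from } h(u)\setminus u\}$, takes the leaf $t\in T$ farthest from $\ell$, and if $h(t)\ne t$ shows by continuity and maximality that $t$ is an edge of a gap $H$ of $\lam$ just beyond $t$. Letting $s$ be the opposite edge of $H$ (or $\hell$), the fact that $s\notin T$ forces $h(s)$ to move down; together with the gap structure and the no-collapse hypothesis this pins $h(s)=t$, $h(t)=s$, and $H$ is $h$-invariant. It is the gap structure of $\lam$ at the extremal leaf, not monotonicity of $h$, that produces the fixed leaf --- that is the step your argument is missing.
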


\begin{proof}
Note that, if $h=-\si_3^k$, then $h^2=\si_3^{2k}$. Hence an $h$-periodic leaf
is $\si_3$-periodic, too. We will now show that there exists an $h$-periodic
leaf $\ell''$ separating $\ell$ and $\hell$. Consider the family $T$ of
leaves of $\lam$ that consists of $\ell$ and leaves $u$ separating $\ell$
from $\hell$ and either $h(u)=u$ or $u$ separates $\ell$ from $h(u)\sm u$.
By continuity, $T$ is closed.
Also, $T$ is nonempty as $\ell\in T$ by definition.
Hence $T$ contains a leaf $t$ farthest from $\ell$.
If $h(t)=t$ we are done; assume that $t\not=h(t)$. By
continuity and by the choice of $t$ there must exist a gap $H$ whose interior
is separated from $\ell$ in $\disk$ by $t$, and $t$ is an edge of $H$.
Let
$s$ be the edge of $H$ defined as follows: if $\hell$ is an edge of $H$, then
$s=\hell$, otherwise $s$ is the edge of $H$ that separates $\hell$ from $h(s)\sm s$.
If $h(s)=s$, we are done.
Assume that $h(s)\ne s$; then, since
$s\not\in T$ and by the assumptions, $h(s)=t,$ $h(t)=s$, and $H$ is
$h$-invariant. Hence $s$ and $t$ are $h$-periodic, and we are done in that
case, too.
\end{proof}

\begin{Lemma}\label{appr-over}
Let $c\in C_sCL$ be a non-degenerate comajor such that $\si_3(c)$ is not
periodic. If there exists a sequence of leaves $c_i\in \lam(c)$ with $c\prec
c_i$ and $c_i\rightarrow c,$ then $c$ is the limit of preperiodic comajors
$\hc_j \in \lam(c)$ of preperiod 1 with $c\prec \hc_j$ for all $j$.
\end{Lemma}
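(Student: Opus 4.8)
The plan is to produce, inside $\lam(c)$, $\si_3$-periodic long leaves sitting just above the major $M:=M_c$ and accumulating on it, and to take suitable short siblings of these as the comajors $\hc_j$; the periodic leaves will be extracted with Lemma~\ref{move-twrds} and the legality of the resulting comajor pairs with Lemmas~\ref{l:str} and~\ref{short-leaves}. To set up, note that for large $i$ the leaf $c_i$ is short, and since $c\prec c_i$ a short sibling of $c_i$ would cross $\pm M\in\lam(c)$; hence the two siblings of $c_i$ that lie in $\lam(c)$ are its long and medium ones. By continuity of the sibling construction of Definition~\ref{d:sibli} away from critical chords ($M$ is non-critical, as $c$ is non-degenerate), $N_i:=M_{c_i}\in\lam(c)$ with $M\prec N_i\to M$; and $c$, hence $M$ (recall $\si_3(M)=\si_3(c)$), is non-periodic by Lemma~\ref{cmajor-end-points}. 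So it suffices to find $\si_3$-periodic long leaves $Y_n\in\lam(c)$ with $M\prec Y_n\to M$ that are closest to criticality in their own $\si_3$-orbit: letting $\hc_n$ be the short sibling of such a $Y_n$ gives $c\prec\hc_n\to c$ with $\si_3(\hc_n)=\si_3(Y_n)$ periodic, so $\hc_n$ has preperiod $1$ once $\{\hc_n,-\hc_n\}$ is shown to be legal.

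For the periodic leaves I would invoke Lemma~\ref{move-twrds}. Iterated pullbacks of $M$ are dense in $\lam(c)$ by Lemma~\ref{l:major-pb}, and since the $N_i\to M$ from above lie in $\lam(c)$, there are iterated pullbacks $P_n$ of $M$ with $M\prec P_n\to M$, say $\si_3^{j_n}(P_n)=M$. Apply Lemma~\ref{move-twrds} with $\ell=M$, $\hell=P_n$, $h=\si_3^{j_n}$: then $\hell$ moves in, because $h(P_n)=M\prec P_n$, and $\ell$ moves out, because $h(M)=\si_3^{j_n}(c)$ is not under $M$ — legality of $\{c,-c\}$ keeps every forward image of $c$ out of $\operatorname{int}\sh(M)$, and since the orbit of $P_n$ shadows that of $M$ for $j_n$ steps and then lands exactly on $M$, the leaf $\si_3^{j_n}(c)=\si_3^{j_n}(M)$ ends up near $M$, hence outside $R(M)$. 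The remaining clause of Lemma~\ref{move-twrds} — that no leaf separating $M$ from $P_n$ (nor $M$, $P_n$ themselves) can eventually map to a leaf with both endpoints in one boundary arc of $\mathcal{S}(M,P_n)$ — follows from Lemmas~\ref{short-leaves} and~\ref{l:str}, since all such leaves exceed the minor of $\lam(c)$ in length and avoid $\sh(M)$, so cannot be crushed into one of those arcs. This produces $\si_3$-periodic leaves $Y_n\in\lam(c)$ with $M\prec Y_n\prec P_n\to M$; each $Y_n$ is long, lying in a thin strip above the long leaf $M$.

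Finally, pass to the closest-to-criticality leaf $Y_n^*$ of the finite $\si_3$-orbit of $Y_n$. Since $M$ is the closest-to-criticality leaf of all of $\lam(c)$ and $Y_n$ lies in a narrow length band above $M$, so does $Y_n^*$, with $Y_n^*\succ M$; and $\hc_n:=$ (short sibling of $Y_n^*$) satisfies $c\prec\hc_n\to c$. Then $\{\hc_n,-\hc_n\}$ is legal: condition (b) of Definition~\ref{d:legal} is immediate from Lemma~\ref{l:str}(4), as $M_{\hc_n}=Y_n^*$ is closest to criticality in its orbit; and (a) holds because the forward images of $\pm\hc_n$ past the first step lie in the $\si_3$-periodic cycle of $\pm\si_3(Y_n^*)\subset\lam(c)$, which is non-self-crossing, while a crossing with $\pm\hc_n$ could disappear under $\si_3$ only by emanating from a critical diagonal of $\pm Q_{\hc_n}$, which Lemma~\ref{l:str} rules out. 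Hence $\hc_n$ is the comajor of $\lam(\hc_n)$ by Lemma~\ref{pull-back-lam1}, so $\hc_n\in C_sCL$, and Lemma~\ref{cmajor-end-points} forces $\hc_n$ to be non-periodic, hence of preperiod $1$; also $\hc_n\in\lam(c)$, since it is an iterated pullback of $\si_3(Y_n^*)\in\lam(c)$ chosen consistently with $\lam(c)$ (or by Lemma~\ref{cmajor-comp-d}). The main obstacle is the middle step: making Lemma~\ref{move-twrds} apply — choosing $P_n$ and $j_n$ so that $M$ and $P_n$ genuinely move towards each other, and verifying its auxiliary non-degeneracy clause. Everything after that is routine bookkeeping with Lemmas~\ref{l:str}, \ref{short-leaves} and~\ref{cmajor-end-points}.
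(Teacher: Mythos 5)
Your strategy mirrors the paper's: replace the approximating leaves by periodic ones via Lemma~\ref{move-twrds}, pass to the closest-to-criticality representative of the resulting periodic orbit, and take its short sibling as the required preperiod-1 comajor via Lemma~\ref{simple-pull-back-lam}. However, you anchor the application of Lemma~\ref{move-twrds} at the \emph{major} $M=M_c$, approaching it from above by iterated pullbacks $P_n$, whereas the paper works at the \emph{minor} $m=\si_3(c)$, approaching it by iterated pullbacks $n_i$ of $\pm m$. This choice is not cosmetic; it creates a real gap.

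The condition you must verify is that $M$ ``moves out,'' i.e.\ $\si_3^{j_n}(M)\not\prec M$. Your justification --- ``the leaf $\si_3^{j_n}(c)=\si_3^{j_n}(M)$ ends up near $M$, hence outside $R(M)$'' --- is a non sequitur: being near $M$ does not place a chord outside $R(M)$, and there is in any case no reason $\si_3^{j_n}(M)$ should be near $M$ ($\si_3^{j_n}$ is expanding, so closeness of $P_n$ to $M$ does not transport forward). Legality (b) only keeps forward images of $c$ out of $\operatorname{int}\sh(M)$, but $R(M)$ is much larger than $C(M)$: since $M$ is long, $R(M)=C(M)\cup R(M'_c)$, so a forward image of $m$ could avoid $\operatorname{int}\sh(M)$ and still lie under $M'_c$, hence under $M$. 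A quick length count shows your argument is not salvageable by a naive bound: a chord under $M'_c$ has length $<\|M'_c\|=\tfrac13-\|c\|$, while forward images of $m$ have length $\ge 3\|c\|$ by Lemma~\ref{short-leaves}; these two conditions are compatible whenever $\|c\|<\tfrac1{12}$. The paper avoids all of this by working at $m$: since $m$ is short, any chord $\prec m$ has both endpoints in $H(m)$ and is therefore strictly shorter than $m$, so Lemma~\ref{short-leaves} (plus non-periodicity of $m$) immediately forces $\pm\si_3^{k_i}(m)\not\prec m$. You should restructure your argument to mimic this: take iterated pullbacks $n_i$ of $\pm m$ with $m\prec n_i\to m$, as the paper does, and the ``moves towards each other'' hypothesis becomes automatic.

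Two smaller points worth tightening even after the restructuring: (i) you only consider $h=\si_3^{j_n}$, but the relevant pullback $P_n$ may be a pullback of $-M$ (or of $\pm M'$) rather than of $M$, which forces you to allow $h=-\si_3^{j_n}$ as the paper does; (ii) the closest-to-criticality leaf $Y_n^*$ of the periodic orbit need not satisfy $M\prec Y_n^*$ --- it may sit near $-M$ instead --- and the paper's proof explicitly handles both possibilities (``it separates either $y_i$ and $m$, or $-y_i$ and $-m$'') before choosing which pullback to call $\hc_i$.
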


\begin{proof}
Let $\{m,-m\}$ and $\{M,-M\}$ be the minors and majors of $\lam(c)$
respectively (we choose one pair of majors out of two possible pairs). By the
assumptions, the minor leaves $m=\si_3(c)$ and $-m=-\si_3(c)$ are not
periodic. Set $m_i=\si_3(c_i)\in \lam(c)$; then $m_i\rightarrow m$ and
$m_i\succ m$.

By Lemma \ref{l:major-pb}, iterated pullbacks of minors are dense in
$\lam(c)$. Hence there exists a sequence $n_i$ of further and further
preimages of $m$ or $-m$ with $n_i\succ m$ and $n_i\rightarrow m$ (as each
$m_i$ is approximated by similar sequences of pullbacks of minors). For each
$i$ there is $k_i$ such that for $h_i=\si_3^{k_i}$ or $h_i=-\si_3^{k_i}$ we
have $h_i(n_i)=m$. Because no forward image of $m$ can be shorter than $m$,
the leaf $h_i(m)$ cannot be under $m$.
Also, 
$h_i(m)\neq m$
(recall that $h_i^2(m)=\si_3^{2k_i}(m)$, and $m$ is not $\si_3$-periodic).
Thus, $h_i$ maps $n_i$ and $m$ towards each other.

Choose $n_i$ so that the width of the strip $\mathcal{S}(m,n_i)$ is less than
$\|m\|$. By Lemma \ref{short-leaves}, any leaf of length at least $\|m\|$
never maps into the boundary arcs of the strip $\mathcal{S}(m,n_i)$. Since
$n_i$ is not a periodic leaf, by Lemma \ref{move-twrds}, there is a
$\si_3$-periodic leaf $y_i$ separating $m$ and $n_i$.

Choose the shortest leaf $\hy_i$ in the orbit of $y_i$. We claim that if
$\hy_i\ne y_i$ then it separates either $y_i$ and $m$, or $-y_i$ and $-m$.
Indeed, $\hy_i=\si_3(\ty_i)$ with $\ty_i$ being a leaf from the orbit of
$y_i$;
the leaf $\ty_i$ is closer to a major than the corresponding pullbacks of $y_i$
 as otherwise its length will not drop
below the length of $y_i$. This implies the above made claim about the
possible locations of $\hy_i$.

Choose long and medium pullbacks of $\hy_i$ close to major pullbacks $M$ and
$M'$ of $m$, and the short pullback $\hc_i$ of $\hy_i$. Since $\hy_i$ is the
shortest leaf in its orbit, it cannot map under itself or under $-\hy_i$. By
Lemma \ref{simple-pull-back-lam}, the leaf $\hc_i$ is a comajor, and we obtain a
sequence $\{\hc_i\}_{i=1}^{\infty}$ of preperiod 1 comajors
converging to $c$ such that $\hc_i\succ c$ for all $i$.
\end{proof}

\begin{corollary}\label{not-preperiodic}
Every not eventually periodic comajor $c$ is a two sided limit leaf in the
Cubic Symmetric Comajor Lamination $C_sCL$.
\end{corollary}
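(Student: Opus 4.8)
The plan is to assemble Lemma \ref{cmajor-end-points}, Lemma \ref{appr-under} and Lemma \ref{appr-over}. First I would unpack what it means for the comajor $c$ to be not eventually periodic: by Lemma \ref{cmajor-end-points}(1)(b) both endpoints of $c$ are non-preperiodic and $c$ is approximated in $\disk$ from both sides by leaves of $\lam(c)$ having no endpoint in common with $c$. Concretely, this yields a sequence $c_i \in \lam(c)$ with $c_i \prec c$ and $c_i \to c$, and a sequence $c_i' \in \lam(c)$ with $c \prec c_i'$ and $c_i' \to c$; the two sides of $c$ in $\disk$ are exactly $R(c)$ (the ``$\prec c$'' side) and its complement, so ``approximated from both sides'' translates precisely into the existence of these two sequences. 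Moreover, since no endpoint of $c$ is preperiodic, the minor $\si_3(c)$ has non-preperiodic endpoints and is in particular not periodic.

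Next I would handle the ``under'' side: applying Lemma \ref{appr-under} to the sequence $c_i \prec c$, $c_i \to c$ gives $c_n \in C_sCL$ for all sufficiently large $n$, so $c$ is a limit of leaves of $C_sCL$ other than $c$ lying under it. For the ``over'' side, the hypotheses of Lemma \ref{appr-over} are now all verified --- $c \in C_sCL$ is a non-degenerate comajor, $\si_3(c)$ is not periodic, and a sequence $c_i' \in \lam(c)$ with $c \prec c_i'$, $c_i' \to c$ exists --- so Lemma \ref{appr-over} produces preperiod-$1$ comajors $\hc_j \in \lam(c)$ with $c \prec \hc_j$ converging to $c$; being comajors, they are leaves of $C_sCL$. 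Thus $c$ is also a limit of leaves of $C_sCL$ on its ``over'' side.

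Combining the two parts, $c$ is a limit of leaves of $C_sCL$ distinct from $c$ from both sides, which is exactly the definition of a two sided limit leaf of $C_sCL$. I do not expect a genuine obstacle: the statement is essentially bookkeeping that stitches together Lemma \ref{appr-under} and Lemma \ref{appr-over}. The only point deserving a sentence of care is the one already noted --- translating ``approximated from both sides'' in Lemma \ref{cmajor-end-points}(1)(b) into approximation by leaves $\prec c$ and leaves $\succ c$ respectively, and checking that the approximating leaves supplied there are legitimately on opposite sides of the chord $c$.
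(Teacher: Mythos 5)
Your proposal is correct and is essentially the paper's argument: use Lemma~\ref{cmajor-end-points}(1)(b) to produce two-sided approximating sequences in $\lam(c)$, feed the under-side sequence into Lemma~\ref{appr-under} and the over-side sequence into Lemma~\ref{appr-over} (after noting $\si_3(c)$ is not periodic), and combine. The paper additionally records that no leaf of $\lam(c)$ shares an endpoint with $c$ — so the approximating comajors it produces have no endpoint in common with $c$ — but that extra point is not needed for the ``two sided limit leaf'' conclusion as defined, so your proof is complete as written.
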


\begin{proof}
By Lemma \ref{cmajor-end-points}, the leaf $c$ is a two sided limit leaf in $\lam(c)$
approximated by leaves of $\lam(c)$ not sharing an endpoint with it. Thus, in
fact no leaf of $\lam(c)$ shares an endpoint with $c$. By Lemmas
\ref{appr-under} and \ref{appr-over}, we see that $c$ can be approximated on
both sides by a sequence of comajors in $C_sCL$ that do not share an endpoint
with $c$ as desired.
\end{proof}

Finally we consider \emph{preperiodic  comajors of preperiod bigger than 1}.

\begin{Lemma}\label{pre-period-bigger-than-1} A non-degenerate preperiodic
comajor $c$ of preperiod at least $2$ is a two sided limit leaf of $C_sCL$ or
an edge of a finite gap $H$ of $C_sCL$ whose edges are limits of comajors of
$C_sCL$ disjoint from $H$.
\end{Lemma}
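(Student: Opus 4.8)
The idea is to mimic the proof of Lemma~\ref{pre-period-1}, but now allow the possibility that several preperiodic comajors of the same (higher) preperiod and period share endpoints, forming a finite gap. Let $c$ be a non-degenerate comajor of preperiod $p\ge 2$ and period $k$. By Theorem~\ref{cmajor-thm} any comajor crossing $c$ in fact shares an endpoint with $c$, and by Lemma~\ref{cmajor-end-points} such a comajor has the same preperiod and period. So the only comajors meeting $c$ at all are those sharing an endpoint; let $H$ be the convex hull of all endpoints of comajors that can be reached from $c$ by a finite concatenation of comajors. The first task is to show $H$ is finite: this is where preperiod $\ge 2$ is used. If $H$ were infinite, its vertices would form an infinite set of preperiodic points all of the same (fixed) preperiod and period, which is impossible since for a fixed pair $(p,k)$ there are only finitely many such points on $\uc$. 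Hence $H$ is a finite gap (or $c$ is isolated among comajors sharing an endpoint with it, in which case $H$ degenerates to $c$ together with $-c$, and we are in the ``two sided limit leaf'' alternative once we produce the approximating comajors).

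**Second step: the gap $H$ maps to a finite gap of $\lam(x)$.** Pick any non-preperiodic point $x$ strictly under $c$ (this exists since $c$ is non-degenerate). By Lemma~\ref{cmajor-comp-d}, applied to the degenerate pair $\{x,-x\}$ and each comajor $d$ with $x\prec d$, every edge of $H$ is a leaf of $\lam(x)$; hence $H$ is contained in a single gap $\widehat H$ of $\lam(x)$ (the edges of $H$ are non-crossing leaves of $\lam(x)$, so they bound a gap). Since the edges of $H$ are strictly preperiodic leaves of $\lam(x)$, $\widehat H$ is a preperiodic gap of $\lam(x)$; it eventually maps, by Theorem~\ref{nwt-thm} and Theorem~\ref{t:gaps-1}, to a periodic finite gap or eventually collapses. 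In either case, Corollary~\ref{no-diagonal} forbids a diagonal of $\widehat H$ (after passing to the image where it is periodic or precritical), which constrains the combinatorics of $H$: the edges of $H$ are genuine edges of $\widehat H$.

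**Third step: the edges of $H$ are limits of comajors disjoint from $H$.** For each edge $\ell$ of $H$ one shows $\ell$ is approximated, from the side not containing $H$, by comajors of $C_sCL$ that do not meet $H$. There are two sub-cases according to whether $\ell$ is approximated (in $\lam(x)$, or in $\lam(\text{endpoint of }\ell)$) by leaves lying under $\ell$ or over $\ell$. For the ``under'' direction one invokes Lemma~\ref{appr-under}: leaves $\ell_i\in\lam(\cdot)$ with $\ell_i\prec\ell$ and $\ell_i\to\ell$ are themselves comajors once $\|\ell_i\|>\|\ell\|/3$, and for large $i$ these are disjoint from $H$. For the ``over'' direction — the delicate one — one adapts Lemma~\ref{appr-over}: using density of iterated pullbacks of minors (Lemma~\ref{l:major-pb}), one finds periodic leaves $y_i$ squeezed between $\ell$ and the approximating sequence, passes to the shortest leaf in each orbit, and pulls it back via Lemma~\ref{simple-pull-back-lam} to produce comajors $\widehat c_i$ of preperiod $1$ with $\ell\prec\widehat c_i\to\ell$. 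A short argument (the periodic leaf $y_i$, hence $\widehat c_i$, lies strictly between $\ell$ and the side of the strip, and $H$ lies on the other side of $\ell$) shows these $\widehat c_i$ are disjoint from $H$. Combining both sides over all edges of $H$ gives the conclusion; if $H$ is degenerate the same construction exhibits $c$ as a two sided limit leaf of $C_sCL$.

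**Main obstacle.** The hard part is the ``over'' direction for an edge $\ell$ of $H$ when $\si_3(\ell)$ happens to be periodic — this is exactly the hypothesis that was excluded in Lemma~\ref{appr-over}. One must either show this case cannot occur for an edge of such a gap $H$ (likely: a periodic minor would force, via Lemma~\ref{cmajor-end-points}, $\ell$ to be an edge of a periodic major, and then the adjacent gap structure conflicts with $H$ being strictly preperiodic of preperiod $\ge 2$), or handle it directly by producing comajors approximating $\ell$ from outside using the periodic gap attached to the corresponding major. Verifying that the produced approximating comajors genuinely miss $H$ — rather than landing on another edge of $H$ — also requires care, and is where the finiteness of $H$ and the no-diagonal corollary are used again.
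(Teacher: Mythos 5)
Your proposal takes a genuinely different route from the paper. You define the candidate gap $H$ a priori as the comajor-hull of $c$ inside $C_sCL$ and then try to verify its structure by embedding it into $\lam(x)$ for a generic non-preperiodic $x\prec c$. The paper instead works entirely inside the pullback lamination $\lam(c)$: it first shows that every gap of $\lam(c)$ is finite (since the critical sets of $\lam(c)$ are collapsing quadrilaterals), then splits according to whether the minor $m=\si_3(c)$ is a two-sided limit leaf of $\lam(c)$ or an edge of a finite gap $G$ of $\lam(c)$; in the second case the gap of $C_sCL$ is taken to be the pullback $G(c)$ of $G$ containing $c$, each of whose edges is shown to be a comajor by an explicit modification of $\lam(c)$, and the no-diagonal corollary plus Lemma~\ref{P-or-P} finish the job.

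There are two genuine gaps. First, the obstacle you flag for the ``over'' direction — that $\si_3(\ell)$ might be periodic for some edge $\ell$ of $H$, so that Lemma~\ref{appr-over} is inapplicable — cannot occur. Any comajor sharing an endpoint with $c$ has, by Lemma~\ref{cmajor-end-points}, the same preperiod as $c$, hence preperiod at least $2$, so its $\si_3$-image has preperiod at least $1$ and is never periodic. This should be stated as a fact rather than left as a hypothetical. Second, your Step 2 does not actually prove that ``every edge of $H$ is a leaf of $\lam(x)$'': Lemma~\ref{cmajor-comp-d} requires $x\prec d$, but if $H$ is, say, a triangle with edges $c,d,e$, a point $x$ chosen under $c$ lies in $H(c)$, which is disjoint from $H(d)$ and $H(e)$, so $x\not\prec d$ and $x\not\prec e$, and the lemma gives you nothing for those edges. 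You would need a separate argument (or a different ambient lamination, as the paper uses) to place all edges of $H$ in a single gap. Finally, you never establish that $H$ is actually a gap of $C_sCL$ (no comajor in its interior), nor that the approximating preperiod-$1$ comajors produced by Lemma~\ref{appr-over} are disjoint from $H$; the paper handles the latter with the fixed-return-triangle contradiction against Lemma~\ref{P-or-P}, while you only indicate that ``a short argument'' is needed.
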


\begin{proof}
Critical sets of the symmetric lamination $\lam(c)$ are collapsing
quadrilaterals $Q$ and $-Q$. We claim that all gaps of $\lam(c)$ are finite.
Indeed, let $U$ be an infinite gap of $\lam(c)$. By Theorem \ref{t:kiwi}, we
may assume that $U$ is periodic.
If 
the degree of $U$ is 
greater than $1$, then $\si_3^n(U)$ contains $Q$ or $-Q$ for some $n>0$, a
contradiction. If 
the degree of $U$ is 
$1$, then, by Theorem \ref{t:gaps-1}, the gap $\si_3^n(U)$ has a critical edge,
 again a contradiction.
Thus, all gaps of $\lam(c)$ are finite.

Since the minors $\pm m$ of $\lam(c)$ are not periodic (the preperiod of $c$ is
greater than $1$), if $\pm m$ are
two-sided limit leaves of $\lam(c)$, 
 then, by Lemmas \ref{appr-under} and \ref{appr-over},
 the leaves $\pm c$ are 
 two-sided limit leaves of $C_sCL$.
Assume now that
 $m$ is an edge of a finite gap $G$ of $\lam(c)$; let $G(c)$ be its pullback
containing $c$ and $G(M)$ be its pullback containing the majors. Then $\si_3$
maps $G(c)$ onto $G$ one-to-one, and sets $G$, $G(M)$, $G(c)$ are
non-periodic; $G(M)\supsetneq Q$ (hence, $G(M)$ is not a gap of $\lam(c)$) and
contains no diagonals that are leaves of $\lam(c)$.

We claim that each edge of $G(c)$ and $-G(c)$ is a comajor of a symmetric
lamination. Remove from $\lam(c)$ the edges of $Q$ that are not edges of
$G(M)$ and all its pullbacks, do the same with $-Q$, and thus construct a
lamination $\lam'(c)$ with critical sets $G(M)$ and $-G(M)$.

Let $\ell$ be an edge of $G(c)$. The sibling leaves of $\ell$ are edges of
$G(M)$; form a quadrilateral $Q'\subset G(M)$ by connecting their endpoint
(i.e., subdivide $G(M)$ by adding $Q'\subset G(M)$). Do the same with
$-G(M)$. By adding all preimages of the new leaves inside preimages of
$G(M)$, we obtain a new symmetric lamination with $\ell$ and $-\ell$ as
comajors.

The edges of $G(c)$ (or $-G(c)$) form a gap of $C_sCL$ since by Corollary
\ref{no-diagonal}, no diagonal of the polygon $G(c)$ can be a leaf (let alone
comajor!) of a symmetric lamination. We claim that all edges of $G(c)$ (and
$-G(c)$) are non-isolated in $\lam(c)$. Indeed, otherwise there exists a
finite gap $H$ that shares an edge (leaf) $\ell$ with $G(c)$. Consider cases.

(1) The gap $H$ is not an iterated pullback of $\pm Q$. By Theorem
\ref{nwt-thm}, the gap $H$ is preperiodic. Combining $H$ and $G(c)$ we obtain
in the end a periodic polygon subdivided into several smaller polygons.
Removing leaves located inside it, and all their iterated pullbacks, we will
obtain a symmetric lamination with some periodic gap so that a  diagonal can
be added to the lamination, a contradiction with Corollary \ref{no-diagonal}.

(2) The gap $H$ is an eventual pullback of $Q$ (or $-Q$).
Then $\ell$ maps
to an edge $\si_3^k(\ell)$ of $Q$ and the set $\si_3^k(G(c))$ is attached to
this edge. Since no image of $c$ is contained in $\sh(M)$, either $M$ or $M'$ must be
an edge of $\si_3^k(H)$.
If $\si_3^k(H)$ is contained in $G(M)$, then it is periodic,
and hence $M$ is periodic, a contradiction. If $\si_3^k(H)$ is not contained
in $G(M)$, then the image of $\si_3^k(H)\cup Q\cup G(M)$ is a preperiodic
polygon which contains $\si_3(M)$ as a diagonal, As in (1), this yields a
contradiction.

By (1) and (2), all edges of $G(c)$ (and $-G(c)$) are non-isolated in
$\lam(c)$. By Lemmas \ref{appr-under} and \ref{appr-over}, all the edges of
$G(c)$ (and $-G(c)$) are approximated by a sequence of leaves in $C_sCL$,
too.

Finally, we claim that none of these approximating comajors share an endpoint
with edges of $G(c)$ (and $-G(c)$). If they did, they would all have the same
preperiod and same period by Lemma \ref{cmajor-end-points}. Any two such
leaves create a fixed return triangle contradicting Proposition \ref{P-or-P}.
\end{proof}

\begin{theorem}[Main theorem of this section]\label{t:main} The Symmetric Cubic Comajor Lamination $C_sCL$ is a q-lamination.
\end{theorem}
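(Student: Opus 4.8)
The plan is to check the two clauses in the definition of a q-lamination: that the induced relation $\sim_{C_sCL}$ (Definition~\ref{d:gen-lam}) is a laminational equivalence relation, i.e.\ satisfies (E1)--(E3), and that $C_sCL$ is exactly the set of edges of the convex hulls of its $\sim_{C_sCL}$-classes. We already know that $C_sCL$ is a lamination (the theorem preceding Definition~\ref{d:cscl}), that it is symmetric, and that distinct comajors never cross (Theorem~\ref{cmajor-thm}); so I would reduce everything to two structural facts about how comajors meet: \textbf{(A)} whenever two comajors share an endpoint they are adjacent edges of a common finite gap of $C_sCL$, and no point of $\uc$ lies on three comajors; \textbf{(B)} no comajor is a diagonal of the convex hull of a $\sim_{C_sCL}$-class. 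Granting (A) and (B): a $\sim_{C_sCL}$-class is then the vertex set of a single comajor (degenerate or not) or of a single finite polygonal gap of $C_sCL$, so all classes are finite, which is (E3); (E2) is immediate from Theorem~\ref{cmajor-thm}; (E1) follows from finiteness of classes together with closedness of $C_sCL$ and (B), since a comajor can neither be crossed nor be a diagonal of a class-hull, so no comajor separates two points of one class and the relation is closed under limits. Finally, the edges of each class-hull lie in $C_sCL$ because $C_sCL$ is a lamination, while (B) forbids any leaf inside a class-hull that is not an edge; hence $C_sCL$ is precisely the set of edges of the convex hulls of its $\sim_{C_sCL}$-classes.

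For (B), I would observe that the finite polygonal gaps of $C_sCL$ are exactly the gaps $G(c)$, $-G(c)$ built in Lemma~\ref{pre-period-bigger-than-1}; these are preperiodic polygons of the symmetric lamination $\lam(c)$ (they cannot be precritical or eventually collapse, for then $c$ would be preperiodic of preperiod $\le 1$, contrary to the case at hand), so Corollary~\ref{no-diagonal} applies and no diagonal of $G(c)$ is a leaf of any symmetric lamination, in particular not a comajor; triangular class-hulls have no diagonals at all.

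For (A), suppose comajors $c\ne d$ share an endpoint $a$. By Lemma~\ref{cmajor-end-points} they are non-periodic with the same preperiod and period, hence fall in one of three cases. Preperiod $1$ is impossible, since by Lemma~\ref{pre-period-1} a preperiod-$1$ comajor is disjoint from every other comajor. If $c,d$ are preperiodic of preperiod $p\ge 2$, then by Lemma~\ref{pre-period-bigger-than-1} each is a two-sided limit leaf of $C_sCL$ or an edge of a finite gap; an edge of a finite gap is approximated from only one side, so if $c$ were a two-sided limit leaf then the comajors approximating it from the side of $d$ would lie nested strictly between $c$ and $d$ (they cannot cross $c$ or $d$), and applying Lemma~\ref{cmajor-comp-d} to a non-preperiodic point under the lower of $c,d$ would make both $c$ and $d$ leaves of one symmetric lamination $\lam(p)$, whence $\si_3^p(c),\si_3^p(d)$ are periodic leaves of $\lam(p)$ sharing a periodic endpoint, producing a fixed return triangle as in the last paragraph of the proof of Lemma~\ref{pre-period-bigger-than-1}, a contradiction with Lemma~\ref{P-or-P}; so $c$ (and likewise $d$) is an edge of a finite gap $G(c)$, and $d$, sharing the vertex $a$ with $c$, must be the adjacent edge of $G(c)$ by (B). If $c,d$ are not eventually periodic, the same reduction via Lemma~\ref{cmajor-comp-d} puts them into one symmetric lamination whenever one is under the other, and there a non-preperiodic leaf sharing an endpoint with another leaf is forced to be an edge of a (pre)periodic or (pre)critical gap, which is impossible by Theorem~\ref{nwt-thm} and Theorem~\ref{t:gaps-1}; the remaining side-by-side configuration, in which no point lies under both comajors, I would eliminate by the orientation-preservation argument of Lemma~\ref{pre-period-1} (the images of the triple of endpoints of $c\cup d$ stay off the critical chords of $\lam(c)$ and $\lam(d)$, hence cannot reverse orientation, contradicting the reversal forced on passing to the common periodic images). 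A third comajor through $a$ would be a diagonal of the common finite gap, excluded by (B).

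The hard part is (A) in its residual configurations: two comajors meeting at an endpoint that are \emph{not} already separated by a known finite gap, namely two non-preperiodic comajors side by side, and a preperiod-$\ge 2$ two-sided limit comajor carrying an extra leaf at one end. Each must be killed either by reducing, via legality and Lemma~\ref{cmajor-comp-d}, to a single auxiliary symmetric lamination and then running a No Wandering Triangles/Theorem~\ref{t:gaps-1} argument or the fixed-return-triangle contradiction with Lemma~\ref{P-or-P}, or, in the side-by-side case where that reduction is unavailable, by the direct orientation argument of Lemma~\ref{pre-period-1}; getting all of these to close, including the degenerate sub-cases where two sibling images coincide or collapse, is where the real care lies.
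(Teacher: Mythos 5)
The paper's own proof is two sentences: it cites Lemma~\ref{pre-period-1}, Corollary~\ref{not-preperiodic}, and Lemma~\ref{pre-period-bigger-than-1} to conclude that at most two comajors meet at a point, and notes that the q-lamination property then follows. Your decomposition into (A) and (B) and your treatment of (B) via Corollary~\ref{no-diagonal} go in the same direction, but you do a great deal of re-derivation for (A) that the paper deliberately bypasses.

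The discrepancy is that you are not exploiting the full strength of the cited lemmas. Corollary~\ref{not-preperiodic} does not merely say that a not-eventually-periodic comajor is a two-sided limit leaf of $C_sCL$: its proof shows $c$ is approximated on both sides by comajors \emph{not sharing an endpoint with $c$}, which immediately forces $c$ to be disjoint from every other comajor (a comajor through an endpoint of $c$ would eventually cross one of the approximating comajors). Likewise Lemma~\ref{pre-period-bigger-than-1} either makes $c$ a two-sided limit leaf (approximated by preperiod-$1$ comajors, which by Lemma~\ref{cmajor-end-points} cannot touch $c$) or an edge of a finite gap $H$ whose edges are limits of comajors \emph{disjoint from $H$}. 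Once these disjointness clauses are used, the whole of your ``residual configurations'' paragraph dissolves: there is no surviving ``side-by-side'' case of non-preperiodic comajors, and the fixed-return-triangle argument you sketch for preperiod-$\ge 2$ two-sided limit leaves sharing an endpoint is never needed. The specific plan you most rely on in the hard cases — transplanting the orientation-preservation argument from Lemma~\ref{pre-period-1} to the non-eventually-periodic side-by-side setting — would in any case not go through as stated, because that argument rests on the endpoints of the majors $M,N$ being \emph{periodic} points $X,A,Y$ whose orientation can be compared after a full return $\si_3^k$; non-preperiodic comajors have no such return. You acknowledge the residual cases are ``where the real care lies''; the paper avoids them entirely by letting the disjointness clauses in the cited results do the work.

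In short: your approach is the paper's approach with an unnecessary and partially shaky re-proof of (A) layered on top. If you replace your whole (A) argument by ``Lemma~\ref{pre-period-1} handles preperiod $1$; the disjointness clauses in Corollary~\ref{not-preperiodic} and Lemma~\ref{pre-period-bigger-than-1} show that every other comajor is either disjoint from all others or an edge of a finite gap none of whose vertices carries a third comajor,'' then your framework with (B) and Corollary~\ref{no-diagonal} closes cleanly, and you have reproduced the paper's proof in slightly expanded form.
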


\begin{proof} By Lemma \ref{pre-period-1}, Corollary \ref{not-preperiodic} and
Lemma \ref{pre-period-bigger-than-1}, no more than two comajors meet at a
single point. Hence, $C_sCL$ is a q-lamination.
\end{proof}

\noindent\textbf{Acknowledgments.} The results of this paper were presented
by the authors at the Lamination Seminar at UAB. It is a pleasure to express
our gratitude to the members of the seminar for their useful remarks and
suggestions.

\renewcommand\bibname{LIST OF REFERENCES}

\end{document}